%% file: main.tex
\documentclass[12pt, a4paper]{article}

\input{setup}

\begin{document}

\maketitle

\begin{abstract}
\setstretch{1.05}
Estimating the Hüsler-Reiss precision matrix is a fundamental problem for statistical inference in multivariate extremes. In high-dimensional settings, the number of unknown parameters grows quadratically with the dimension, making regularisation indispensable. Existing approaches regularise the estimation problem by exploiting sparsity. 
In this paper, we consider an alternative structural assumption, namely that the Hüsler-Reiss precision matrix is block-structured. To estimate such a matrix, we introduce a new regularisation framework based on a convex fusion penalty. By encouraging rows and columns to merge, this approach provides a parsimonious representation of the precision matrix, allowing for the simultaneous estimation of its coefficients and the underlying partition of the variables.
The resulting convex optimisation problem is solved by an efficient algorithm combining gradient-based updates with progressive fusion steps. We establish non-asymptotic concentration bounds for the empirical weights entering the penalty and prove consistency of both block recovery and precision matrix estimation under suitable regularity conditions. Numerical experiments demonstrate that our methodology accurately recovers the latent block structure while accurately estimating the Hüsler-Reiss precision matrix across various configurations, illustrating the practical benefits of fusion-based regularisation for multivariate extremes. These benefits are also demonstrated by applying the proposed method to foreign exchange data.
\end{abstract}

\paragraph{Key words.} multivariate extreme value theory, Hüsler-Reiss block model, variable clustering, penalised optimisation, tail dependence coefficient

\input{sections/introduction.tex}
\input{sections/background.tex}
\input{sections/method.tex}
\input{sections/simulation.tex}
\input{sections/conclusion.tex}

{
\setstretch{1.02}
\bibliographystyle{apalike}
\bibliography{references}
}
\input{sections/appendix.tex}

\end{document}

%% file: setup.tex
\usepackage{templates/macros}
\usepackage[a4paper,top=4cm,bottom=4cm,left=2.5cm,right=2.5cm]{geometry}
\usepackage[utf8]{inputenc}
\usepackage[T1]{fontenc}
\usepackage{comment}
\usepackage{dsfont}
\usepackage{graphicx}
\usepackage{hyperref}
\usepackage{amsmath,amsthm,amsfonts,amssymb,amscd}
\usepackage{booktabs}   
\usepackage{array}
\usepackage[final,nopatch=footnote]{microtype}
\usepackage{tikz}
\usetikzlibrary{matrix,decorations.pathreplacing,calc}
\usepackage[skip=2pt]{caption}
\usepackage{subcaption}
\usepackage{algorithm}
\usepackage{algorithmic}
\usepackage{enumitem}
\usepackage{setspace}
\usepackage{authblk}
\usepackage{natbib} 
\date{}
\usepackage{lipsum}

\title{Inference for Hüsler-Reiss block models}

\author[1,2]{CAPEL Alexandre\footnote{Address for correspondance: CAPEL Alexandre, IMAG, Montpellier, France.  Mail: alexandre.capel@umontpellier.fr}}
\author[1]{DEMANGEOT Marine}
\author[1,2]{MEYER Nicolas}
\author[1,2]{TOULEMONDE Gwladys}

\affil[1]{IMAG, Université de Montpellier, France}
\affil[2]{LEMON, Inria, Montpellier, France}

\usepackage{ulem} 
\usepackage{cancel} 

\usepackage{tikz}

\usetikzlibrary{calc}

%% file: sections/introduction.tex
\section{Introduction}

Extreme value theory (EVT) provides a probabilistic framework for modelling rare events and quantifying the risk associated with large observations. In a multivariate setting, the dependence structure between extremes plays a key role, as the simultaneous occurrence of large values often drives the overall risk. A common approach consists in studying threshold exceedances, whose limit is, under mild regularity conditions, a multivariate generalised Pareto distribution \citep{rootzenMultivariateGeneralizedPareto2006}. This limit is the counterpart to the max-stable distributions, which appear as the limit of the normalised maxima (see \cite{de1977limit}, or the monographs of \cite{de_haan_fereira_2006} and \cite{beirlant_et_al_2006}).
It provides a flexible non-parametric model for multivariate extremes. As the dimension increases, however, estimating the dependence structure becomes increasingly challenging, motivating the development of parsimonious models and dimension reduction techniques for multivariate extremes (\cite{engelkeSparseStructuresMultivariate2021}).

Among the available parametric models for multivariate extremes, the Hüsler-Reiss distribution occupies a particularly prominent position. Originally introduced as the limiting distribution of normalised maxima of Gaussian triangular arrays by \cite{huslerMaximaNormalRandom1989}, it has become one of the most widely used models for threshold exceedances because of both its flexibility and its close connection with Gaussian dependence structures. This distribution is parameterised by a variogram matrix which completely characterises its dependence structure. Beyond its probabilistic interpretation, this parameterisation has profoundly changed statistical inference for Hüsler-Reiss models. An equivalent and often more convenient representation is provided by the Hüsler-Reiss precision matrix $\Theta$, introduced by \cite{engelkeGraphicalModelsExtremes2020}. Similarly to the Gaussian case, this matrix provides a compact description of the dependence structure and constitutes the main object of inference in the Hüsler-Reiss model.

The introduction of the Hüsler-Reiss precision matrix has led to the development of graphical models for extremes. In Gaussian graphical models, the precision matrix encodes conditional independence between variables \citep{lauritzenGraphicalModels1996}. Extending this idea to extremes is non-trivial since for max-stable distributions conditional independence implies independence \citep{papastathopoulos2016conditional}. This limitation was overcome by \cite{engelkeGraphicalModelsExtremes2020} who introduced graphical models for multivariate generalised Pareto distributions together with an appropriate notion of extremal conditional independence. In the Hüsler-Reiss setting, this notion is captured by the precision matrix: a null entry $\Theta_{ij}$ indicates that variables $i$ and $j$ are conditionally independent given the remaining variables, see \textit{e.g.} \cite{engelkeGraphicalModelsExtremes2020} and \cite{hentschelStatisticalInferenceHusler2025}.

Estimating the Hüsler-Reiss precision matrix is particularly challenging in high-dimensional settings where the number of parameters grows quadratically with the dimension.
Most existing methods rely on sparsity as a structural assumption, that is, zero entries in the precision matrix, thereby recovering the underlying extremal graphical model, see \cite{engelke2022structure} and \cite{wan_zhou}.
In contrast, \cite{lederer_oesting} introduce a score-matching-based framework that bypasses intractable normalising constants, yielding scalable estimators for the precision matrix, with theoretical guarantees in high dimensions.
Other approaches have also been proposed for estimating the Hüsler-Reiss model under some specific constraints such as total positivity \cite{rottger_engelke_zwiernik} or a local metric property \cite{rottger_schmitz}.

While sparsity provides an effective form of regularisation, it may fail to exploit richer structural patterns arising in high-dimensional settings. In many applications, variables may naturally organise into clusters that exhibit similar extremal dependence patterns. In this work, we formalise this idea by assuming a block structure for the Hüsler-Reiss precision matrix. The relevant structure is therefore not characterised by isolated zero coefficients, but by groups of variables sharing common extremal dependence. Such block structures provide a parsimonious representation of the dependence by substantially reducing the number of parameters while preserving the main features of the extremal dependence. Moreover, they yield an interpretable clustering of the variables, offering additional insight into the organisation of the underlying tail structure. Although block-structured covariance and precision matrices have been extensively investigated in classical multivariate statistics, similar regularisation strategies have received comparatively little attention for extremal multivariate models.
This block structure assumption naturally motivates a different estimation problem. Specifically, instead of identifying zero entries of the Hüsler-Reiss precision matrix, one seeks to recover groups of rows and columns that are identical up to the symmetry constraints imposed by the matrix. The purpose of this paper is thus to develop a statistical methodology for estimating Hüsler-Reiss precision matrices, addressing not only parameter estimation itself but also the recovery of their unknown block structure. We replace the usual sparsity-inducing penalty by a regularisation strategy that promotes equality within groups of the coefficients, thereby inducing the desired block structure of the precision matrix. Our approach relies on a Clusterpath-type penalty originally developed by \cite{hockingClusterpathAlgorithmClustering2011} for convex clustering  and applied to Gaussian graphical models by \cite{touw2026clusterpath}. The proposed regularisation produces a sequence of estimators with increasingly coarse block structures as the regularisation parameter grows, thereby simultaneously estimating the precision matrix and the underlying partition of the variables.

Following the ideas introduced in the Gaussian case, we develop an efficient optimisation algorithm combining gradient-based updates with progressive fusion of rows and columns along the regularisation path. The algorithm automatically identifies the emerging block structure while preserving the positive semi-definite constraints required by the model.

Beyond the methodological contribution, an important objective of this work is to establish statistical guarantees for the proposed procedure. Building upon recent concentration results for empirical variogram estimators for Hüsler-Reiss models (see \cite{engelke2022structure} and \cite{engelke2026learning}), we derive non-asymptotic bounds for the empirical weights which appear in the penalty and prove consistency of the estimated block structure under suitable regularity conditions. Our results identify conditions under which the procedure achieves consistent partition recovery and consistent estimation, providing theoretical insight into the trade-off induced by the fusion regularisation.

The contributions of this paper are fourfold. We introduce a new regularisation framework for estimating block-structured Hüsler-Reiss precision matrices, thereby extending the scope of structured estimation beyond sparsity. Second, we develop a dedicated optimisation algorithm together with a refitting strategy that improves estimation accuracy after cluster recovery. Third, we establish theoretical guarantees for both the empirical weights and the recovery of the underlying block structure. Finally, through an extensive simulation study, we demonstrate that the proposed procedure accurately estimates the precision matrix while reliably identifying the latent partition under a variety of balanced, unbalanced and perturbed block configurations.

Section \ref{background} introduces Hüsler-Reiss models and the class of block-structured matrices considered throughout the paper. Section \ref{method} presents the proposed regularised estimator together with the associated optimisation algorithm. We establish in Section \ref{framework} the asymptotic properties of the estimator, which follows from the finite sample estimation error, including consistency results for both the recovery of the block structure and the estimation of the Hüsler-Reiss precision matrix. Section \ref{sect-sim} reports numerical experiments illustrating its finite-sample performance. Finally, the use of this method is illustrated by a practical example in Section \ref{sect-app}. All proofs and additional details are deferred to the Supplementary Materials.

%% file: sections/background.tex
\section{Theoretical background}
\label{background}
\subsection{Notation}

Throughout this paper, the following notations are used. The set of real $d \times d$ matrices is denoted by $\mathcal{M}_d(\mathbb{R})$ and, for any $M \in \mathcal{M}_d(\mathbb{R})$, its transpose is denoted by $M^\top$. Bold symbols such as $\vect{x} \in \R^d$ are vectors where each component is written as $x_i,~ i \in V = \{1, \dots, d\}$. For example, $\vect{0}_d$ or $\vect{1}_d$ are respectively the $d$-dimensional vectors $(0, \dots, 0)^\top$ and $(1, \dots, 1)^\top$. All the operations over vectors are meant componentwise, and $\vect{a} < \vect{b}$ means that $a_i < b_i$ for all $i \in V$.  Finally, for any $p \in [1, \infty]$, $||\cdot||_p$ denotes the $l_p$-norm and $|| \cdot ||_F$ stands for the Frobenius norm.

\subsection{Hüsler-Reiss distribution}

Multivariate extreme value theory offers two complementary asymptotic frameworks for describing the extremal dependence of a random vector $\vect{X} \in \mathbb{R}^d$. The first one studies suitably normalised componentwise maxima of independent replicates of $\vect{X}$, which converges to a max-stable distribution under mild assumptions, see \textit{e.g.} \citet{de1977limit}. The second, adopted throughout this paper, focuses on the exceedances of $\vect{X}$ above a high threshold. Since our primary interest lies in the dependence structure, we assume throughout that the marginal distributions are known and have been standardised to the standard Pareto scale. Under this marginal standardisation and a multivariate regular variation assumption (see \citet{resnick2008extreme}), the conditional distribution of threshold exceedances converges to a multivariate Pareto distribution. More precisely,
\begin{equation}
\label{eq:conv-mgpd}
     \lim_{u \rightarrow \infty} \p\left(\vect{X}/u \le \vect{z} \, | \, ||\vect{X}|| > u\right) = \p(\vect{Y} \le \vect{z}) \,,
\end{equation}
for any $\vect{z} \in \mathcal L = \{\vect{z} \in \R^d_+ : ||\vect{z}|| > 1\}$.
The limiting random vector $\vect{Y}$ follows a multivariate Pareto distribution and $\vect{X}$ is said to be in the domain of attraction of $\vect{Y}$, see \citet{rootzenMultivariateGeneralizedPareto2006}. Throughout this work, the infinity norm is used, so that conditioning occurs whenever at least one component exceeds the threshold. This choice naturally connects the multivariate Pareto framework with its associated max-stable distribution (\citet{de1977limit}, \citet{de_haan_fereira_2006} \citet{resnick2008extreme}).

The Hüsler-Reiss distribution  \citep{huslerMaximaNormalRandom1989} is one of the most widely used parametric models for multivariate threshold exceedances. Since the proposed methodology is formulated in terms of the Hüsler-Reiss precision matrix, we first recall the more classical probabilistic representation of the model and establish the notation used throughout the paper.

A Hüsler-Reiss distribution is characterised by a symmetric variogram matrix $\Gamma^\star = (\Gamma^\star_{ij})_{1 \le i, j \le d}$ with zero diagonal coefficients and non-negative entries. Furthermore, $\Gamma^\star$ is strictly conditionally negative definite, that is, $\vect{a}^\top \Gamma^\star\vect{a} < 0$ for all non-zero vectors $\vect{a} \in \R^d$ satisfying $a_1 + \cdots + a_d = 0$, see \citet{kabluchkoExtremesIndependentGaussian2011}. The corresponding density admits a closed-form expression given by
\begin{equation}\label{eq:HR_density}
    f(\vect{y})
    \propto
    y_k^{-2} \prod_{i\in V\setminus\{k\}} y_i^{-1}\phi(\vect{\tilde y}_{\setminus k} ; \Sigma^\star_{(k)})\, \quad \vect{y} \in \mathcal L\,, \quad k \in V\,,
\end{equation}
where $\phi(\cdot ; \Sigma^\star_{(k)})$ is the density of a centered Gaussian vector with covariance matrix $\Sigma^\star_{(k)} \in \mathcal{M}_{d-1}(\R)$, $\vect{\tilde y}_{\setminus k} = (\log(y_i/y_k) + \Gamma^\star_{ik}/2)_{i \in V\setminus \{k\}}$, and
\begin{equation*}
    (\Sigma^\star_{(k)})_{ij} = \frac{1}{2} (\Gamma^\star_{ik} + \Gamma^\star_{kj}-\Gamma^\star_{ij}), \quad \text{for } i,j \neq k\,.
\end{equation*}
Although the density is expressed on an arbitrary reference component $k$, the resulting distribution does not depend on this choice.
The representation \eqref{eq:HR_density} highlights the central role of the variogram matrix, which completely determines the dependence structure of the Hüsler-Reiss distribution. 
We refer to \citet{engelke2015estimation} for a comprehensive presentation of the Hüsler–Reiss model and its link with the associated max-stable Hüsler-Reiss distribution, introduced primarily by \citet{huslerMaximaNormalRandom1989}.

Hüsler-Reiss models are appealing for both practical and theoretical reasons. From an applied perspective, they provide a flexible model for multivariate extremes and have been successfully used in environmental applications such as river discharges, rainfall and spatial extremes, see for instance \citet{asadi2015river,thibaud2013rainfall,davison2013geostatistics} for some applications. From a theoretical perspective, they hold a distinguished position within multivariate extreme value theory because they play a role analogous to Gaussian distributions in classical probability. This follows from the asymptotic result of \citet{huslerMaximaNormalRandom1989}, which states that the limit of the maximum of properly normalised Gaussian random vectors follows a max-stable Hüsler-Reiss distribution. This analogy extends beyond their asymptotic construction. For instance, just as Gaussian distributions are characterised by their covariance matrix, the dependence structure of Hüsler-Reiss models is fully summarised by the variogram matrix. The latter is linked to the pairwise tail dependence coefficient $\chi_{ij}$, which quantifies the strength of extremal dependence between variables $i$ and $j$, as follows:
\begin{equation}
\label{eq:relation_chi_gamma}
\chi^\star_{ij}
= \lim_{u \to 1} \p(F_i(X_i) > u | F_j(X_j) > u)
= \p(Y_i >1 | Y_j >1)
= 2 - 2 \Phi(\textstyle{\sqrt{\Gamma^\star_{ij}}}/2)\,,
\end{equation}
where $ \Phi$ denotes the cumulative distribution function of a standard Gaussian random variable and $F_i$ is the cumulative distribution function of $X_i$. Equation \eqref{eq:relation_chi_gamma} shows that larger variogram values correspond to weaker extremal dependence and, more importantly, that the dependence structure is entirely summarised by pairwise extremal dependence coefficients. As proved by \citet{lalancette2023pairwise}, the Hüsler-Reiss family is in fact the unique multivariate Pareto model enjoying this remarkable property.

Although the variogram matrix provides a natural description of extremal dependence, it is not the most convenient parameterisation for statistical inference. 
A major development in recent years has been the introduction of an equivalent parameterisation through a precision matrix, which plays a role analogous to the one in Gaussian graphical models. Since the proposed methodology is formulated directly in terms of this matrix, we introduce this representation in the next subsection.

\subsection{The Hüsler-Reiss precision matrix}
Instead of relying on the canonical variogram, statistical inference for Hüsler-Reiss models is advantageously framed around an equivalent parameterisation based on the Hüsler-Reiss (HR) precision matrix, denoted here by $\Theta^\star$, which plays a role analogous to the precision matrix in Gaussian graphical models \citep{engelkeGraphicalModelsExtremes2020,hentschelStatisticalInferenceHusler2025}.

\citet{hentschelStatisticalInferenceHusler2025} showed that every Hüsler-Reiss distribution admits the equivalent density representation
\begin{equation*}
f(\vect{y})
\propto
\prod_{i=1}^d y_i^{-1}\exp\Big\{\vect{\mu}_{\Gamma^\star}^\top \log(\vect{y})- \frac 12  \log(\vect{y})^\top \Theta^\star  \log(\vect{y})\Big\}\,, \quad \vect{y} \in \mathcal L\,,
\end{equation*}
where $\Pi = I_d - \vect{1}_d \vect{1}_d^\top / d$ denotes the orthogonal projection on the space $\{\vect{x} \in \R^d: x_1 + \cdots + x_d = 0\}$, and $\vect{\mu}_{\Gamma^\star} = \Pi (-\Gamma^\star / 2) \vect{1}_d$.
The matrix $\Theta^\star$ belongs to the convex cone
\begin{equation*}
    \SPd = \left\{\Theta \in \mathcal M_d(\R): \Theta^\top = \Theta, \, \Theta \text{ is positive semi-definite}, \, \Theta \vect{1}_d = \vect{0}_d, \,\text{rank}(\Theta) = d-1\right\}\,,
\end{equation*}
is unique, and is related to the variogram via the relation $\Theta^\star = (\Pi(-\Gamma^\star /2)\Pi)^+$, where $(\cdot)^+$ denotes the Moore-Penrose pseudo-inverse. Consequently, the variogram matrix and the HR precision matrix are in one-to-one correspondence.

Even if $\Theta^\star$ itself is non-invertible, one can derive from this matrix a sequence of invertible matrices $(\Theta^\star_{(k)})_{k\in V}$, where $\Theta^\star_{(k)}$ is defined as $\Theta^\star$ with the $k$-th row and column being removed. Each matrix $\Theta^\star_{(k)}$ is strictly positive definite and  $(\Theta^\star_{(k)})^{-1} = \Sigma^\star_{(k)}$, the covariance matrix which appears in \eqref{eq:HR_density}, see \citet[Appendix B]{engelkeGraphicalModelsExtremes2020}.
Hence, each sub-matrix can be seen as the precision matrix of a $(d-1)$-dimensional Gaussian distribution.

The analogy with the Gaussian framework mentioned above extends to $\Theta^\star$ as this matrix plays a role analogous to that of the precision matrix of a multivariate normal distribution.
A null coefficient $\Theta^\star_{ij} = 0$, for $i \neq j$ indicates that the $i$-th and $j$-th marginals of $\vect{Y}$ satisfy a conditional independence property given the other margins. 
Strictly speaking, this conditional independence property is defined for the conditional vectors $\vect{Y}^{(k)} = (\vect{Y} \mid  Y_k > 1)$, whose support is a product space, rather than for $\vect{Y}$ itself. We refer to the seminal paper of \citet{engelkeGraphicalModelsExtremes2020} for more details on graphical models for extremes.
Within the framework of graphical models of \citet{lauritzenGraphicalModels1996}, the HR precision matrix can be interpreted as a signed Laplacian matrix of a graph where the variables $Y_1,\dots, Y_d$ correspond to the nodes and the edges are weighted by the matrix entries \citep{rottger_coons_grosdos2026}. This extends the standard Laplacian definition by allowing positive off-diagonal coefficients. In this context, a null entry $\Theta^\star_{ij} = 0$ corresponds to the absence of an edge between nodes $i$ and $j$. 

Besides its graphical interpretation, the HR precision matrix also admits a characterisation as the unique minimiser of the convex objective function $-\log(|\Theta|_+) - \frac 12 \text{tr}(\Theta \Gamma^\star)$,
see \citet{hentschelStatisticalInferenceHusler2025}. Here $|\cdot|_+$ denotes the pseudo-determinant, that is, the product of the non-null eigenvalues of the considered matrix.
In practice, $\Gamma^\star$ is unknown and $\Theta^\star$ can be assessed by minimising
\begin{equation}
    \label{nllh}
    L(\Theta) := -\log(|\Theta|_+) - \frac 12 \text{tr}(\Theta \hat{\Gamma})\,,
 \end{equation}
 where $\hat{\Gamma}$ is an estimator of $\Gamma^\star$.
As brought out by \citet{hentschelStatisticalInferenceHusler2025}, when $\hat \Gamma$ is taken to be the empirical variogram estimator provided by \citet{engelke2022structure}, the objective function $L$ coincides with a surrogate negative log-likelihood for the Hüsler-Reiss model. This function is the direct analogue of the Gaussian negative log-likelihood expressed in terms of the precision matrix, where $\hat{\Gamma}$ is replaced by the sample covariance matrix, see, e.g., \citet{yuan_lin} or \citet{friedman_hastie_tibshirani}.

Existing estimation procedures of $\Theta^\star$ almost exclusively rely on sparsity as the underlying structural assumption. Under this paradigm, regularisation such as LASSO penalty aims at recovering the support of the HR precision matrix and, consequently, the associated extremal graphical model.
In the present work, we depart from this viewpoint by considering a different form of structural regularisation. Rather than promoting sparsity, we assume that the HR precision matrix exhibits a block structure, reflecting groups of variables sharing similar extremal dependence patterns. The next subsection formalises this class of matrices, which constitutes the central structural assumption of our methodology.

\subsection{Hüsler-Reiss block model}
\label{subsec:block-model}

High-dimensional extremal datasets often exhibit latent groups of variables sharing similar dependence patterns. For example, in environmental applications, nearby monitoring stations are expected to experience comparable extremal events, leading to similar tail dependence structures. Rather than estimating all pairwise interactions independently, it is therefore natural to seek a lower-dimensional representation based on clusters of variables with homogeneous extremal behaviour, described by the tail dependence coefficient $\chi$, defined in Equation \eqref{eq:relation_chi_gamma}. Before specifying precisely what this homogeneity entails, it is useful to introduce the notion of a block-structured matrix.

Recall that a partition $\mathcal{C}$ of $V$ corresponds to a family of non-empty disjoint subsets $C_1, \ldots, C_K$ of $V$ such that $C_1 \cup \cdots \cup C_K = V$. In what follows, for such a partition, we denote by $d_1, \ldots, d_K$ the associated cluster size, so that $d_1 + \cdots + d_K = d$.

\begin{definition}[Block-structured matrix]
    \label{def:block_structure}
Let $\mathcal{C} = \{C_1, \ldots, C_K\}$ be a partition of $V$. A symmetric matrix $M \in \mathcal{M}_d(\R)$ is said to satisfy a block structure with respect to $\mathcal{C}$ if it can be written as
\begin{equation*}
\footnotesize{
M
=
\begin{pmatrix} 
    \alpha_1 I_{d_1} & 0 & \dots & 0 \\
    0 & \alpha_2 I_{d_2} & \dots & 0 \\
    \vdots & \vdots & \ddots & \vdots \\
    0 & 0 & \dots & \alpha_K I_{d_K}
\end{pmatrix}
    + U R U^\top\,,}
\end{equation*}
where $U=(u_{ik})_{i \in V, \, k = 1,\dots,K}$ is a $d \times K$ matrix satisfying $u_{ik} = 1$ if $i \in C_k$ and $0$ otherwise, $R \in \mathcal{M}_K(\R)$ is a symmetric matrix of size $K$ called the \textit{reduced matrix}, and $\alpha_1, \ldots, \alpha_K \in \R$.
\end{definition}

The elements $C_1, \ldots, C_K$ of the partition are called \textit{clusters}. For any pair of indices in the same cluster, the coefficients in the corresponding column (and row, to preserve symmetry) are equal. The partition is equivalently encoded by the matrix $U$ which indicates which cluster each coordinate belongs to. In what follows, unless stated otherwise we omit explicit references to the partition and simply refer to $M$ as a block-structure matrix, or say that it is block-structured.

The block structure is entirely determined by three ingredients: the partition $\mathcal{C}$, the reduced matrix $R$, and the coefficients $\alpha_1, \ldots, \alpha_K$. A priori, there is no constraints on these $K$ coefficients. However, in many cases some extra assumptions on the matrix $M$ entail conditions on these parameters.

In our framework, we formalise the notion of homogeneous extremal behaviour across subsets of variables by assuming that these variables can be partitioned into clusters $C_1, \ldots, C_K$ such that those within a given cluster share identical within- and cross-cluster dependencies. Specifically, all pairs of variables belonging to the same cluster share the same tail dependence coefficient, while pairs of variables belonging to two given distinct clusters also share a common tail dependence coefficient. According to Equation \eqref{eq:relation_chi_gamma}, this assumption is equivalent to assuming that the variogram matrix is block-structured with $\alpha_k = -\Gamma_{ij}$ for $i,j \in C_k$, the common coefficient of $\Gamma^\star$ in the cluster $k$. This corresponds to $2 \Phi^{-1}(1-\chi_k/2)^2$ where $\chi_k$ is the joint dependence parameter in the cluster $k$. 

Assuming the variogram is block-structured implies that the family of matrices $\Sigma^{(k)}$ satisfies the same constraint for any $k \in V$. Since the block-structure property is preserved under matrix inversion, the variogram $\Gamma^\star$ is a block-structure matrix if and only if the HR precision matrix $\Theta^\star$ is a block-structure matrix.


In view of this equivalence, we henceforth formulate the methodology directly in terms of the HR precision matrix $\Theta^\star$. As noted earlier, besides being the parameter of interest for inference, it also admits an interpretation in terms of conditional extremal independence.

Under the block-structure assumption, the HR precision matrix $\Theta^\star \in \SPd$ admits the following explicit representation with respect to a partition $\mathcal{C} = \{C_1, \ldots, C_K\}$:
\begin{equation}
\label{eq:block_structure_Theta}
\footnotesize{
\begin{pmatrix} 
    (a_{1} - r_{11})I_{d_1} & 0 & \dots & 0 \\
    0 & (a_{2} - r_{22})I_{d_2} & \dots & 0 \\
    \vdots & \vdots & \ddots & \vdots \\
    0 & 0 & \dots &(a_{K} - r_{KK})I_{d_K}
\end{pmatrix} + U R U^\top\,.}
\end{equation}
Here we have $a_k = r_{kk}- \sum_{\ell=1}^K d_\ell r_{k\ell}$ ensuring the row null-sum property satisfied by $\Theta$. A Hüsler-Reiss distribution with an HR precision matrix satisfying \eqref{eq:block_structure_Theta} is referred to as a \textit{Hüsler-Reiss block model}. Such a structure corresponds to the $K$-block model introduced by \citet{eisenach2020high} in the context of Gaussian graphical models, where $K$ represents the number of clusters, $\mathcal C$ is the clustering partition of $V$ and $d_k$ is the size of cluster $C_k$. We refer to the Supplementary Material \ref{example_block_structure_Theta} for an illustrative example of a block-structure precision matrix. 

In a block-structured model, dependence boils down to two types: within-cluster and cross-cluster dependencies. For a given cluster $C_k$, the former is summarised by the diagonal term $r_{kk}$, whereas for two distinct clusters $C_k$ and $C_\ell$, the latter is given by the off-diagonal term $r_{k\ell}$. These two types of dependence are summarised in the reduced matrix $R$. However, not every block-structured matrix defines a valid HR precision matrix. Supplementary Materials \ref{appendix:characterisation.hr} provides simple sufficient conditions ensuring that the representation \eqref{eq:block_structure_Theta} belongs to $\SPd$.




%% file: sections/method.tex
\section{Clusterpath procedure for Hüsler-Reiss block models}
\label{method}

This section introduces the proposed estimation procedure for the Hüsler–Reiss block models. Our objective is to estimate the HR precision matrix while simultaneously recovering its underlying partition. We first formulate the corresponding penalised optimisation problem and discuss the three ingredients of the penalty, namely the dissimilarity measure, the adaptive weights and the regularisation parameter.  We then describe the optimisation algorithm used to solve the resulting convex problem.

\subsection{Penalised optimisation problem}
\label{sec:prob}

Our objective is to estimate the HR precision matrix while simultaneously recovering the unknown partition introduced in Subsection \ref{subsec:block-model}. The estimator obtained by minimising the objective function $L$ in \eqref{nllh} fails, in general, to capture the underlying partition, unless the estimator $\hat{\Gamma}$ of $\Gamma^\star$ exhibits a specific block structure.
To recover a specific structure on $\Theta^\star$, a natural strategy is to augment the objective function $L$ with a penalty that encourages column-wise equality within each cluster.
Since this constraint requires equality across entire columns rather than shrinking individual coefficients towards zero, a standard $\ell_1$ regularisation is insufficient.
Therefore, we consider a variant of the convex penalisation method originally introduced by \cite{hockingClusterpathAlgorithmClustering2011} for convex clustering and later adapted to the Gaussian setting by \cite{touw2026clusterpath}.
Specifically, the estimator $\hat{\Theta}$ is defined as the solution to the following penalised optimisation problem: 
\begin{equation}
\label{prob-nllh}
    \min_{\Theta} \{  L_\pen(\Theta , \lambda)\}\, \quad \text{such that } \Theta \in \SPd\,,
\end{equation}
with
\begin{equation*}
    L_\pen(\Theta, \lambda) = L(\Theta) + \lambda\mathcal{P}(\Theta)\,,
\end{equation*}
where $\lambda \ge 0$ is the regularisation parameter, $L$ is the function defined in \eqref{nllh} as $L(\Theta) = -\log(|\Theta|_+) - \frac 12 \text{tr}(\Theta \hat{\Gamma})$. The so-called 
\textit{clusterpath penalty} is given by
\begin{equation}\label{eq:penalty}
  \pen (\Theta) = \sum_{i<j} w_{ij} D^2(\vect{\Theta}_{i\cdot}, \vect{\Theta}_{j\cdot})\,,
\end{equation}
with $D$ a convex dissimilarity measure on $\R^d$ and $w_{ij}$ non-negative weights.
Since $D$ is convex, the Clusterpath penalty is itself convex. Combined with the strict convexity of $L$ over $\SPd$, this implies that the problem \eqref{prob-nllh} admits a unique minimiser. Furthermore, note that $L$ is of class $C^2$ on $\SPd$, and so is $L_\pen$. To obtain some of the results presented in Section \ref{method}, the following assumption on the unpenalised function $L$ is also made.

\begin{assumption}
\label{aspt-3}
    The unpenalised objective function $L$ is strongly convex, i.e. there exists a positive constant $S$ such that for all $\Theta \in \SPd$, 
    \begin{equation*}
    \nabla^2 L(\Theta)(\Theta',\Theta') \ge S, \quad \Theta' \in \SPd, ||\Theta'||_F=1,
  \end{equation*}
  where $\nabla^2 L(\Theta)$ denotes the second order differential of $L$ with respect to $\Theta$.
\end{assumption}

This assumption is standard in optimisation theory and guarantees the convergence of gradient descent algorithms, see \cite{garrigos2023handbook}. In our context, it is necessary from a statistical perspective to control the solution of the optimisation problem \eqref{prob-nllh}, see Theorem \ref{theo:estimator_consistency} in Section \ref{framework}.

The construction of the penalty given in Equation \eqref{eq:penalty} relies on three ingredients: a dissimilarity measure between columns, adaptive weights, and a regularisation parameter. We discuss each component in turn.

\paragraph{The dissimilarity measure}

The fusion penalty should compare two columns of the HR precision matrix in a way that reflects the block structure introduced in Subsection \ref{subsec:block-model}.
Since the diagonal entries and the row sums are completely determined by the remaining coefficients, only the free off-diagonal entries need to be compared.
In particular, the dissimilarity should be designed so that it vanishes whenever two variables belong to the same cluster, so that no penalty is incurred within a cluster.

In this paper, we consider
$
D(\vect{\Theta}_{i\cdot}, \vect{\Theta}_{j\cdot}) = \sqrt{\sum_{k\neq i,j} (\Theta_{ik} - \Theta_{jk})^2}
$,
which is a simpler version of the distance used by \cite{touw2026clusterpath}. Although $D$ is not a metric, it satisfies the key property required  by our methodology: two columns have zero dissimilarity if and only if the associated variables belong to the same cluster. It also satisfies the standard triangle inequality.

\paragraph{The weights}
The weights act as a scaling factor that modulates the strength of the fusion penalty.
Pairs of variables that are likely to belong to the same cluster should receive large weights, whereas unlikely pairs should be penalised only weakly.
Specifically, a small dissimilarity $D(\vect{\Theta}_{i\cdot}, \vect{\Theta}_{j\cdot})$ should correspond to a large weight $w_{ij}$, encouraging the variables $i$ and $j$ to be grouped together. Conversely, for a large dissimilarity $D(\vect{\Theta}_{i\cdot}, \vect{\Theta}_{j\cdot})$ the weight $w_{ij}$ should be small, thereby limiting unnecessary fusion between different clusters.
In convex clustering literature, the weights are commonly derived from the squared distance between the data points. We define the family of exponential weights parameterised by a single hyperparameter $\zeta>0$:
\begin{equation}
\label{eq:weights}
  \hat w_{ij} = \exp\left\{- \zeta D^2\left(\hat{\vect{\Gamma}}_{i\cdot}, \hat{\vect{\Gamma}}_{j\cdot}\right)\right\} \in (0,1],
\end{equation}
This choice follows \cite{hockingClusterpathAlgorithmClustering2011} and  \cite{touw2026clusterpath}, while reducing the tuning of the $O(d^2)$ pairwise weights to a single parameter $\zeta$.

The idea behind this construction becomes clear in the ideal setting when $\hat{\Gamma} = \Gamma^\star$. In this case, the weights defined in \eqref{eq:weights} converge to the binary oracle weights $w_{ij}^\star$ as $\zeta$ tends to infinity, i.e.
\begin{equation*}
  \hat w_{ij} \underset{\zeta \rightarrow \infty}{\longrightarrow} w_{ij}^\star = \begin{cases}
    1 \quad \text{if } (i,j) \in T^\star, \\ 0\quad  \text{otherwise}\,,
  \end{cases}
\end{equation*}
where $T^\star = \{(i,j) \in V^2,\, i < j : D(\vect{\Theta^\star}_{i\cdot}, \vect{\Theta^\star}_{j\cdot}) = 0\}$ denotes the set of all pairs of indices that are in the same cluster. The limit weights $w_{ij}^*$ exactly identify pairs belonging to the same block. This asymptotic property plays a central role in the theoretical analysis developed in Section \ref{framework}.

\paragraph{The regularisation parameter}
As mentioned above, the optimisation problem \eqref{prob-nllh} is strictly convex and thus admits a unique minimiser. For a fixed $\lambda>0$, let $\hat \Theta(\lambda)$ denote this unique solution. In what follows, we provide the continuity of the minimiser $\hat \Theta(\lambda)$ with respect to $\lambda$.

\begin{proposition}
  \label{path-sol}
  If there exists $\tilde \lambda>0$ such that $\pen(\hat \Theta(\tilde \lambda)) = 0$, then the mapping
  \begin{align*}
    [0, \tilde \lambda] &\rightarrow \SPd\\
    \lambda &\mapsto \hat \Theta(\lambda)
\end{align*}    
  is continuous, linking the unpenalised solution $\hat \Theta(0)$ to the penalised one $\hat \Theta(\tilde \lambda)$.
\end{proposition}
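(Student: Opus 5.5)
The plan is to treat this as a parametric strictly convex problem and obtain continuity of the argmin from a Berge-type argument: the objective is jointly continuous, the minimiser is unique, and the minimisers stay in a compact subset of $\SPd$ uniformly over $\lambda\in[0,\tilde\lambda]$. Continuity then follows by a subsequence argument.

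\textbf{Step 1 (joint continuity).} The map $(\Theta,\lambda)\mapsto L_\pen(\Theta,\lambda)=L(\Theta)+\lambda\pen(\Theta)$ is jointly continuous on $\SPd\times[0,\tilde\lambda]$, since $L$ is $C^2$ on $\SPd$ and $\pen$ is continuous.

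\textbf{Step 2 (uniform compactness).} I will show that $\{\hat\Theta(\lambda):\lambda\in[0,\tilde\lambda]\}$ lies in a compact subset $\mathcal K\subset\SPd$. The key bound comes from comparing with the endpoints. By optimality and $\pen\ge0$,
\[
L(\hat\Theta(\lambda))\le L_\pen(\hat\Theta(\lambda),\lambda)\le L_\pen(\hat\Theta(\tilde\lambda),\lambda)=L(\hat\Theta(\tilde\lambda)),
\]
where the last equality uses the hypothesis $\pen(\hat\Theta(\tilde\lambda))=0$. Hence every $\hat\Theta(\lambda)$ lies in the sublevel set $\{\Theta\in\SPd: L(\Theta)\le c\}$ with $c=L(\hat\Theta(\tilde\lambda))$.

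The main obstacle is proving that this sublevel set is compact inside $\SPd$, i.e.\ bounded and bounded away from the rank-deficient boundary. I would do this using Assumption \ref{aspt-3}. Strong convexity of $L$ on the affine/convex set $\SPd$ gives the quadratic lower bound
\[
L(\Theta)\ge L(\hat\Theta(0))+\tfrac S2\|\Theta-\hat\Theta(0)\|_F^2,
\]
which yields boundedness. For the boundary, as the smallest nonzero eigenvalue of $\Theta$ tends to $0$ on the subspace $\vect 1_d^\perp$ while $\Theta$ stays bounded, $-\log|\Theta|_+\to+\infty$ and the linear term stays bounded, so $L\to+\infty$. This keeps the sublevel set away from the boundary. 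Closedness then follows from continuity of $L$, giving compactness.

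\textbf{Step 3 (continuity).} Take $\lambda_n\to\lambda$ in $[0,\tilde\lambda]$. By compactness of $\mathcal K$, any subsequence of $\hat\Theta(\lambda_n)$ has a further subsequence converging to some $\bar\Theta\in\mathcal K$. For any fixed $\Theta\in\SPd$, optimality gives $L_\pen(\hat\Theta(\lambda_n),\lambda_n)\le L_\pen(\Theta,\lambda_n)$. Passing to the limit using joint continuity gives $L_\pen(\bar\Theta,\lambda)\le L_\pen(\Theta,\lambda)$, so $\bar\Theta$ is a minimiser. By uniqueness (strict convexity), $\bar\Theta=\hat\Theta(\lambda)$. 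Since every subsequence has a further subsequence with this same limit, the whole sequence converges to $\hat\Theta(\lambda)$, which proves continuity of $\lambda\mapsto\hat\Theta(\lambda)$ on $[0,\tilde\lambda]$.
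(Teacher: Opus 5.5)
Your architecture matches the paper's: the comparison $L(\hat\Theta(\lambda))\le L(\hat\Theta(\tilde\lambda))$, compactness of the resulting sublevel set, and the subsequence-plus-uniqueness argument. Your check that the sublevel set stays away from the rank-deficient boundary is correct. It even fills a point the paper leaves implicit, since the paper extracts a limit $\Theta''$ without verifying $\Theta''\in\SPd$.

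The gap is in how you get boundedness. The proposition does not assume Assumption~\ref{aspt-3}, and the paper's proof does not use it. More seriously, the quadratic lower bound you derive from it is false for this $L$. Since $\SPd$ is a cone, for any $\Theta_0\in\SPd$ and $t>0$,
\[
L(t\Theta_0)=-(d-1)\log t-\log|\Theta_0|_+-\tfrac t2\,\mathrm{tr}(\Theta_0\hat\Gamma).
\]
This grows at most linearly in $t$, while $\tfrac S2\|t\Theta_0-\hat\Theta(0)\|_F^2$ grows quadratically. Equivalently, the Hessian of $-\log|\Theta|_+$ is homogeneous of degree $-2$, so $\nabla^2L(t\Theta_0)=t^{-2}\nabla^2L(\Theta_0)\to0$. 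No $S>0$ can work uniformly on $\SPd$, so your Step 2 rests on a hypothesis that is never satisfied. The same defect affects the assumption as stated in the paper, but the paper does not need it here.

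What is missing is the coercivity of Lemma~\ref{coercivity}, which comes from the linear term rather than from curvature. Write $\Theta=\sum_{k=2}^d\lambda_k\vect{q}_k\vect{q}_k^\top$ with orthonormal $\vect{q}_k\perp\vect{1}_d$. Let $c=\min\{-\vect{q}^\top\hat\Gamma\vect{q}:\vect{q}\perp\vect{1}_d,\ \|\vect{q}\|_2=1\}$, which is positive by strict conditional negative definiteness of $\hat\Gamma$ (the property the paper invokes). Then
\[
L(\Theta)\ge\sum_{k=2}^d\Bigl(\tfrac c2\lambda_k-\log\lambda_k\Bigr),
\]
and each summand is bounded below by $1+\log(c/2)$ and tends to $+\infty$ as $\lambda_k\to0$ or $\lambda_k\to\infty$. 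On $\{L\le L(\hat\Theta(\tilde\lambda))\}$ this traps every nonzero eigenvalue in a fixed compact subinterval of $(0,\infty)$. That gives boundedness and separation from the boundary at once.

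Alternatively, given that $\hat\Theta(0)$ exists, as the setup takes for granted, you can argue by convex analysis. Extending $L$ by $+\infty$ off $\SPd$ gives a closed convex function whose minimiser set $\{\hat\Theta(0)\}$ is nonempty and bounded, and such functions have bounded sublevel sets. With either argument in place of the strong-convexity step, your Steps 1 and 3 go through unchanged.
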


The continuity of the regularisation path implies that nearby values of the tuning parameter produce similar estimators. Consequently, a relatively coarse grid of regularisation parameters is sufficient in practice.

Once the dissimilarity measure $D$, the weights $(w_{ij})$, and the regularisation parameter $\lambda$ are specified, the matrix minimising \eqref{prob-nllh} can be evaluated. Although solving \eqref{prob-nllh} provides a regularised estimate of the HR precision matrix, it does not explicitly enforce an exact block structure, even with a large $\lambda$. We therefore complement the optimisation step with an explicit fusion procedure that progressively merges sufficiently similar columns. The resulting algorithm is presented below.

\begin{remark}
There is generally no reason for the condition ensuring the smoothness of the solution path in Proposition \ref{path-sol} to hold. However, the additional fusion step ensures that it is satisfied. By imposing merges, it guarantees the existence of a sufficiently large $\lambda$ for which all variables belong to a single cluster, thereby eliminating all pairwise distances.
\end{remark}

\subsection{Algorithmic details of Hüsler–Reiss Clusterpath procedure}

The Hüsler–Reiss Clusterpath (HRC) procedure alternates between two complementary steps. First, clusters that are sufficiently similar are merged according to the dissimilarity measure introduced above. Second, given the current partition, the reduced matrix is updated by minimising the penalised objective function through block gradient descent. These two steps are repeated until convergence.
The subsection details these steps assuming that $K'$ clusters, denoted by $C_1, \dots, C_{K'}$, have already been identified and that the reduced matrix $R$ has been updated accordingly.

\subsubsection{Cluster fusion step}

At each iteration, the algorithm computes the dissimilarity between every pair of current clusters. The two closest clusters, say $C_k$ and $C_\ell$, are merged whenever their dissimilarity falls below a given fusion threshold $\varepsilon_f > 0$, that is, if
$D(C_k, C_\ell) = D(\vect{\Theta}_{i\cdot}, \vect{\Theta}_{j\cdot}) \le \varepsilon_f$.
Since $\Theta$ is block-structured, this dissimilarity does not depend on the particular choice of $i \in C_k$ and $j \in C_\ell$. It can be related to the entries of $R$ as 
\begin{equation*}
D(C_k, C_\ell)^2  =
\tilde D(\vect{r}_{k \cdot}, \vect{r}_{\ell \cdot})^2 :=
\sum_{q \neq k,\ell} d_m (r_{kq}-r_{\ell q})^2 +(d_k-1) (r_{kk}-r_{\ell k})^2+(d_\ell-1) (r_{\ell\ell}-r_{\ell k})^2.
\end{equation*}
If the two clusters are merged, the associated reduced matrix $R$ is updated according to the following rule. By denoting the fused cluster $C_c = C_k \cup C_\ell$, the new reduced matrix $R^{new}$ indexed by $\{1, \dots, K' , c\}\setminus \{k,\ell \}$ is defined as:
\begin{equation}
  \label{r-approx}
  \footnotesize{
r^{new}_{pm} = \begin{cases} 
\frac{d_k r_{km} + d_\ell r_{\ell m}}{d_k + d_\ell} & \text{if } m \neq p = c,\\
\frac{d_k r_{kp} + d_\ell r_{\ell p}}{d_k + d_\ell} & \text{if } p \neq m = c,\\
r_{k\ell} &\text{if } m = p = c,\\
r_{pm} &\text{otherwise}.\\
\end{cases}}
\end{equation}

The coefficients linking the newly created cluster $C_c$ to another cluster are obtained by averaging the previous coefficients, with weights proportional to the cluster sizes. The within-cluster coefficient is inherited from the coefficient linking the two merged clusters, while all remaining entries are left unchanged.

The following proposition shows that this update preserves the admissibility of the resulting new HR precision matrix $\Theta_\text{new}$.

\begin{proposition}
\label{prop:merging_step}
    Let $\Theta \in \SPd$ be a matrix satisfying the block structure \eqref{eq:block_structure_Theta} with associated $C_1, \dots, C_{K'}$ and reduced matrix $R$, and let $\varepsilon_f >0$. Assume that there exist $k,\ell \in \{1,\dots, K'\}$ such that $\tilde D(\vect{r}_{k \cdot}, \vect{r}_{\ell \cdot})\le \varepsilon_f$. Then, for $\varepsilon_f$ sufficiently small, the matrix $\Theta_{new}$ obtained after the merging step described in Equation \eqref{r-approx} belongs to $\SPd$.
\end{proposition}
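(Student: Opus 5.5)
The plan is to check each defining property of $\SPd$ separately for $\Theta_{new}$. Symmetry, the null row sums and the rank condition will be handled by an exact computation. Positive semi-definiteness, together with the rank, will follow from a perturbation argument that compares $\Theta_{new}$ with $\Theta$.

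\textbf{Step 1: algebraic properties.} First, $\Theta_{new}$ is built from the symmetric reduced matrix $R^{new}$ through the representation \eqref{eq:block_structure_Theta} on the coarser partition. Its diagonal coefficients are $a^{new}_k = r^{new}_{kk} - \sum_\ell d^{new}_\ell r^{new}_{k\ell}$. Hence symmetry and $\Theta_{new}\vect{1}_d = \vect{0}_d$ hold by construction, with no use of $\varepsilon_f$.

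\textbf{Step 2: $\Theta_{new}$ is close to $\Theta$.} Write $\Delta = \Theta_{new} - \Theta$ and bound $\|\Delta\|_F$ by a constant times $\tilde D(\vect{r}_{k\cdot},\vect{r}_{\ell\cdot}) \le \varepsilon_f$. The update \eqref{r-approx} replaces $r_{km}$ and $r_{\ell m}$ by their size-weighted average. So for $m\neq k,\ell$,
\[
|r^{new}_{cm}-r_{km}| = \tfrac{d_\ell}{d_k+d_\ell}|r_{km}-r_{\ell m}| \le |r_{km}-r_{\ell m}|,
\]
and each term is controlled by $\tilde D$ through the summands $d_q(r_{kq}-r_{\ell q})^2$.

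The within-cluster entries need separate treatment. The new coefficient $r_{k\ell}$ differs from $r_{kk}$ by a quantity controlled by $(d_k-1)(r_{kk}-r_{\ell k})^2$. One caveat: when $d_k=1$ the old $r_{kk}$ only enters through the diagonal, so the off-diagonal entries of $\Theta$ in the merged block are genuinely determined by $\tilde D$. In every case, each off-diagonal entry of $\Delta$ is bounded by a constant times $\tilde D$.

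The diagonal entries of $\Delta$ are minus the corresponding row sums of off-diagonal entries, because both matrices have zero row sums. Hence $\|\Delta\|_F \le c_d\,\varepsilon_f$ for a constant $c_d$ depending only on $d$.

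\textbf{Step 3: positive semi-definiteness and rank.} This is the main point, and it goes through Weyl's inequality on the subspace $\vect{1}_d^\perp$. Both $\Theta$ and $\Theta_{new}$ annihilate $\vect{1}_d$ and are symmetric, so both leave $\vect{1}_d^\perp$ invariant. Since $\Theta\in\SPd$, its restriction to $\vect{1}_d^\perp$ has smallest eigenvalue $\lambda_{\min}>0$, equal to the $(d-1)$-th nonzero eigenvalue of $\Theta$.

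Weyl's inequality applied to the restriction of $\Theta_{new} = \Theta+\Delta$ gives smallest eigenvalue at least $\lambda_{\min} - \|\Delta\|_2 \ge \lambda_{\min} - c_d\varepsilon_f$. Choosing $\varepsilon_f < \lambda_{\min}/c_d$ makes this positive. Then $\Theta_{new}$ is positive semi-definite with kernel exactly $\mathrm{span}(\vect{1}_d)$, so its rank is $d-1$ and $\Theta_{new}\in\SPd$. This is what ``$\varepsilon_f$ sufficiently small'' means, with the threshold depending on $\Theta$ through $\lambda_{\min}$ and on $d$.

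\textbf{Expected obstacle.} The delicate part is Step 2: showing that every entry of $\Delta$, in particular the new within-cluster coefficient $r_{k\ell}$ and the diagonal terms, is genuinely controlled by $\tilde D$. This requires a careful case distinction on whether $d_k$ or $d_\ell$ equals $1$. If some entry turns out not to be controlled, I would instead compare $\Theta_{new}$ with the orthogonal projection of $\Theta$ onto block matrices for the merged partition. That projection averages entries, so it is a contraction fixing $\vect{1}_d$, and I would then apply the same Weyl argument.
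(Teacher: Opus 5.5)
Your proposal is correct and follows the same route as the paper. The paper also first bounds $\|\Theta_{new}-\Theta\|_F \le c\,\varepsilon_f$ entry by entry through the summands of $\tilde D$ (Lemma \ref{lemma:distnewTheta}), then concludes because $\SPd$ is an open subset of the symmetric zero-row-sum matrices. Your Weyl-inequality argument on $\vect{1}_d^\perp$ is a quantitative form of that openness step, and it gives the explicit threshold $\varepsilon_f < \lambda_{\min}/c_d$.
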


Proposition \ref{prop:merging_step} guarantees that the fusion step preserves the properties of HR precision matrices, provided that the fusion threshold is sufficiently small. The fusion step is repeated until every pair of clusters is separated by a distance larger than $\varepsilon_f$, after which the reduced matrix is updated through the optimisation step described below.

\subsubsection{Block gradient step}

Once the current partition has been fixed, the reduced matrix is updated by minimising the penalised objective function while the current clustering is fixed. Since only the reduced matrix has to be optimised, the problem can be solved efficiently through block coordinate optimisation developed by \cite{touw2026clusterpath} in the Gaussian case. This procedure is particularly well-suited to the problem, as it exploits the block structure of the current matrix $\Theta$ by automatically computing the gradient block descent using the columns identifying the $K'$ clusters. 

The optimisation proceeds iteratively over the $K'$ clusters. For a given cluster $C_m$, a Newton-Raphson's gradient step is performed on the corresponding column of the reduced matrix $R$, which simultaneously updates all columns of $\Theta$ associated with the variables in $C_m$.
The resulting gradient direction is then computed as
\begin{equation*}
    \vect{\delta}_m = - \nabla_m^2 L_\pen(\Theta, \lambda)^{-1} \nabla_m L_\pen(\Theta, \lambda),
\end{equation*}
where $\nabla_m^2 L_\pen$ and $\nabla_m L_\pen$ are respectively the $m$-th block Hessian matrix and the $m$-th block gradient of $L_\pen$ for a fixed $\lambda$. Explicit expressions of the block gradient and block Hessian are provided in the Supplementary Materials \ref{derivative-sect}.

The gradient direction is combined with a line-search procedure to determine an admissible step size. Besides improving numerical stability, this guarantees that every iterate remains inside the parameter space $\SPd$, using the characterisation established in Proposition \ref{prop:valid-r}.

\subsubsection{Regularisation path computation}

The HRC procedure is based on computing solutions to the problem \eqref{prob-nllh} across a grid of value for $\lambda$. The purpose is therefore to find solutions in which the number of clusters range from $d$ (typically when $\lambda = 0$ and each variable forms its own cluster) to one cluster (when the penalty is null)). However, it is necessary to rescale the penalty for mitigating the procedure's sensitivity to the data.
To this end, let stress that in the current problem, the regularisation parameter is sensitive to the weights of the penalty term. To reduce this sensitivity, we replace $\lambda$ in \eqref{prob-nllh} with its scaled counterpart, $\lambda' = \lambda \bigl(\sum_{i<j} w_{ij}\bigr)^{-1}$.

Finally, for each grid value of $\lambda$, the HRC procedure starts the optimisation algorithm using the solution obtained at the previous $\lambda$. This approach offers two benefits. Firstly, these warm starts enable the inclusion of clustering information for the new solution. Secondly, it will allow the entire procedure to be represented as a hierarchical clustering algorithm, the use of which is demonstrated in Section \ref{sect-app}.

\subsubsection{Refitted estimator}

The alternation between the cluster fusion and the block gradient steps stops when at least $t_{\max}$ iterations have been performed or the absolute value of the difference between two consecutive likelihoods is smaller than a convergence threshold $\varepsilon_c$.
At that point, the algorithm provides a clustering partition and the corresponding reduced matrix $R$.
Since the estimation of the reduced matrix $R$ relies on the penalised negative log-likelihood \eqref{prob-nllh}, it might be biased.
As proposed by \cite{touw2026clusterpath}, a refitted estimate is computed in order to address this issue.
It corresponds to the optimisation of the problem \eqref{nllh} under the cluster partition obtained after the first two steps of the procedure.

\section{Statistical framework}
\label{framework}

Throughout this section, we consider $n$ independent copies 
of a random vector $\vect{X} \in \R^d$ belonging to the domain of attraction of a Hüsler-Reiss distributed random vector $\vect{Y}$ with HR precision matrix $\Theta^\star$. We assume that $\Theta^\star$ satisfies the block structure introduced in \eqref{eq:block_structure_Theta} with associated clusters $C_1, \ldots, C_K$. The marginal distribution functions of $\vect{X}$ are denoted by $F_1,\ldots,F_d$, with empirical counterparts $\hat F_1,\ldots,\hat F_d$.

The estimation procedure developed in Section~\ref{method} is implemented by taking, as the variogram estimator in equations \eqref{nllh} and \eqref{eq:weights}, the empirical estimator proposed by \cite{engelke2022structure}, and computed from the $n$ copies $\vect{X}$. This estimator is denoted by $\hat \Gamma_n$ to emphasise its dependence on the data.
This naturally leads to the empirical penalised criterion
\begin{equation}
  \label{nlikelihood}
    L_{\mathcal P}^{(n)}(\Theta, \lambda) = -\log(|\Theta|_+) - \frac 12 \text{tr}(\hat \Gamma_n\Theta) + \lambda \sum_{i<j}  \hat w_{ij}^{(n)} D^2(\vect{\Theta}_{i \cdot}, \vect{\Theta}_{j \cdot})\,,
\end{equation}
where
\begin{equation}
\label{eq:empiciral_weights}
    \hat w_{ij}^{(n)} = \exp\left\{- \zeta D^2 ((\hat{ \vect{\Gamma}}_n)_{i\cdot}, (\hat{ \vect{\Gamma}}_n)_{j\cdot})\right\}
\end{equation}
denote the empirical weights.

The purpose of this section is to establish statistical guarantees for the unique minimiser $\hat \Theta_n(\lambda) \in \SPd$ of $ L_{\mathcal P}^{(n)}(\Theta, \lambda)$. This quantity is referred to as the Hüsler-Reiss Clusterpath (HRC) estimator.
The analysis proceeds in three steps.
We first recall concentration results for the empirical variogram estimator, which constitutes the only source of randomness in the optimisation problem. We then derive non-asymptotic bounds for the empirical fusion weights and finally show that these results imply both consistent estimation of the HR precision matrix and consistent recovery of its underlying partition.

\subsection{Concentration properties of the empirical variogram}

The theoretical analysis relies on the empirical variogram estimator introduced by \cite{engelke2022structure}. Since our optimisation procedure only depends on the data through this estimator, its concentration properties constitute the key ingredient of all subsequent results. This estimator is defined as
\begin{equation}
  \label{vario}
  \hat \Gamma_n = \frac 1 d \sum_{m \in V} \hat \Gamma_n^{(m)},
\end{equation}
where $((\hat \Gamma_n^{(m)})_{ij})_{1 \le i,j \le d}$ corresponds to the sample variance of $\log (\frac{n}{n+1} - \hat F_i(X_i)) - \log (\frac{n}{n+1} - \hat F_j(X_j))$ conditionally on the event $\{\hat F_m(X_m) >1 - \frac k n\}$.

\begin{remark}
    The threshold parameter $k$ controls the classical bias–variance trade-off inherent to peaks-over-threshold methods by fixing the number of extreme data used to approximate the distribution of $\vect{Y}$. To mitigate this issue, $k=k_n$ is typically selected such that $k \to \infty$ and $k / n \rightarrow 0$ as $n \rightarrow \infty$ so that the tail sample size grows with $n$, at a slower rate than $n$. This guarantees that the peaks-over-threshold approximation remains asymptotically valid while still providing enough data to keep variance manageable.
\end{remark}

To control the asymptotic behaviour of $\hat \Gamma_n$, the following two assumptions are required.

\begin{assumption}
\label{aspt:bounded-behaviour}
    The marginal distribution functions $F_1, \dots, F_d$ of the vector $\vect{X}$ are continuous and there exist positive constants $\tilde K$ and $\beta'$ such that for all $J \subset V$ with $|J| = 2,3$ and $q \in (0, 1]$,
    \begin{equation*}
       \sup_{\vect{x}\in[0,1]^{|J|}}  \left|\frac 1q \, \p(F_J(\vect{X}_J) > \vect{1}- q \vect{x}_J) - \frac{\p(\vect{Y}_J > 1/\vect{x}_J)}{\p(Y_1 > 1)} \right| \le \tilde K q ^{\beta'}\,,
    \end{equation*}
    where $\vect{x}_J=(x_j, j \in J)$.
\end{assumption}

\begin{assumption}
    \label{aspt:vario}
    The coefficients of the variogram $\Gamma^\star$ are finite and strictly positive.
\end{assumption}

These assumptions are standard in multivariate extreme value theory. Assumption~\ref{aspt:bounded-behaviour} is a classical second-order condition controlling the rate of convergence towards the limiting multivariate Pareto distribution. Assumption~\ref{aspt:vario} excludes degenerate dependence structures by requiring all pairwise variogram coefficients to remain strictly positive and finite. In view of \eqref{eq:relation_chi_gamma}, this is equivalent to assuming that every pairwise tail dependence coefficient belongs to $(0,1)$, avoiding some degenerate cases such as asymptotic independence. Together, these assumptions are sufficient to derive concentration inequalities for the empirical variogram estimator, see \cite{engelke2026learning}.

\begin{proposition} \cite{engelke2026learning}
 \label{prop:conc-vario}
 Let Assumptions \ref{aspt:bounded-behaviour} and \ref{aspt:vario} hold and let $\alpha$ be arbitrary in $(0,1)$. Then, for any $\beta<\beta'$, there exist $C, c$ and $M$ depending only on $\alpha$, $\tilde K$ and $\beta$ such that for any $n^\alpha \le k \le \frac n2$ and $\varepsilon \ge M d^3\exp\left(-\frac{ck}{\log(n)^8}\right)$:

  \begin{equation}\label{eq:concentration_bound_variogram}
    \mathbb P \left(||\hat \Gamma_n - \Gamma^\star||_\infty > \delta_n \right) \le \varepsilon,
  \end{equation}
  with 
  \begin{equation}\label{eq:_delta_n_emp_vario}
      \delta_n : = \delta_n(\varepsilon) = C \left\{\left(\frac kn\right)^{\beta} \log\left(\frac nk \right)^2 + \frac{1 + \sqrt{\frac 1c \log(Md^3 / \varepsilon)}}{\sqrt{k}}\right\}.
  \end{equation}
\end{proposition}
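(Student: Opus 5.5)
Since this is a result of \cite{engelke2026learning}, the plan is to reconstruct their argument. It reduces the sup-norm error to a union bound over the $O(d^3)$ entries $(\hat\Gamma_n^{(m)})_{ij}$. First, by \eqref{vario} and the triangle inequality, $\|\hat\Gamma_n-\Gamma^\star\|_\infty\le \max_{m,i,j}|(\hat\Gamma_n^{(m)})_{ij}-\Gamma^\star_{ij}|$. Moreover, by \cite{engelke2022structure}, $\Gamma^\star_{ij}$ is the variance of $\log Y_i-\log Y_j$ given $Y_m>1$. So it suffices to prove, for each fixed triple $J=\{i,j,m\}$ (of size $2$ or $3$, matching Assumption~\ref{aspt:bounded-behaviour}), a tail bound of the form $\mathbb P(|(\hat\Gamma_n^{(m)})_{ij}-\Gamma^\star_{ij}|>\delta_n)\le \varepsilon/(Md^3)$. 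A union bound over at most $d^3$ triples then gives \eqref{eq:concentration_bound_variogram}. The $\log(Md^3/\varepsilon)$ inside $\delta_n$ is exactly the price of this union bound.

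For a fixed triple, I would write the sample variance as a combination of first and second empirical moments of $\log(1-\hat F_i)-\log(1-\hat F_j)$ restricted to the exceedance set $\{\hat F_m>1-k/n\}$. Each such moment can be expressed as an integral of functions of $\log$ against the tail empirical measure $\hat\mu_n(A)=\frac1k\sum_t \mathbf 1\{\mathbf 1-\hat F_J(\vect X_{t,J})\in \tfrac kn A\}$. I then split each moment error into three parts:
\begin{itemize}
\item a \emph{bias} term, comparing the pre-asymptotic measure $\frac nk\mathbb P(F_J(\vect X_J)>\mathbf 1-\tfrac kn\vect x)$ with its Pareto limit;
\item a \emph{stochastic} term, comparing the tail empirical measure built from the true margins $F_J$ with its expectation;
\item a \emph{rank} term, coming from replacing $F_J$ by $\hat F_J$.
\end{itemize}

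The bias term is handled by Assumption~\ref{aspt:bounded-behaviour} with $q=k/n$, which gives a bound of order $(k/n)^{\beta'}$. Integrating against the unbounded weights $\log^2$ over $[k/n\cdot,1]$ produces the factor $\log(n/k)^2$. Replacing $\beta'$ by any $\beta<\beta'$ absorbs constants. Assumption~\ref{aspt:vario} ensures that the limiting log-moments are finite, so the limit quantities are well defined and bounded.

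For the stochastic term, I would use a Bernstein or Talagrand-type inequality for the uniform tail empirical process indexed by rectangles $\{\vect x_J\}$. This yields a deviation of order $(1+\sqrt{t})/\sqrt k$ with probability $1-e^{-ct}$. The rank term is controlled through the tail quantile process, in the usual way: marginal uniform empirical deviations are bounded by DKW-type inequalities on the scale $k/n$, which again gives $O(\sqrt{t/k})$.

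The main obstacle is the unboundedness of $\log$ near the boundary. Integrating the empirical process against $\log^2$ terms cannot use a uniform sup bound directly. I would truncate at levels $x\ge n^{-1}$-type cutoffs, where the ranks give $\log$ at most $\log n$, so powers of $\log n$ appear. Chaining over dyadic shells then controls each shell at the cost of further $\log n$ factors. Tracking these powers yields the effective sample size $k/\log(n)^8$ in the exponent. This also explains the lower restriction $\varepsilon\ge Md^3\exp(-ck/\log(n)^8)$: below it, the truncation event itself is not controlled. The constraint $n^\alpha\le k\le n/2$ makes $\log n\asymp\log k$, so that the constants depend only on $\alpha,\tilde K,\beta$.
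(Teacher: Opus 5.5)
The paper does not prove this proposition; it imports it from \cite{engelke2026learning}. Put $\lambda:=\sqrt{c^{-1}\log(Md^3/\varepsilon)}$. The statement then reads $\mathbb P(\|\hat\Gamma_n-\Gamma^\star\|_\infty>C\{(k/n)^\beta\log(n/k)^2+(1+\lambda)/\sqrt k\})\le Md^3e^{-c\lambda^2}$ for $\lambda\le\sqrt k/\log(n)^4$. So the $\log(Md^3/\varepsilon)$ term and the lower threshold on $\varepsilon$ are pure reparametrisation. What your argument must deliver is a per-triple tail $Me^{-c\lambda^2}$ on that whole range of $\lambda$. Your architecture is the right one: the Hüsler--Reiss identity $\mathrm{Var}(\log Y_i-\log Y_j\mid Y_m>1)=\Gamma^\star_{ij}$ for every $m$, a union bound over at most $d^3$ triples, log-moments against the tail empirical measure, a bias/stochastic/rank split, truncation of the logarithms, and $k\ge n^\alpha$ to absorb polylogarithmic factors. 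However, two steps do not go through as you state them.

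\emph{Bias.} Assumption~\ref{aspt:bounded-behaviour} controls $q^{-1}\mathbb P(F_J(X_J)>1-qx_J)$ only for $x_J\in[0,1]^{|J|}$. Write $x_i=\frac nk(1-F_i(X_i))$. On the conditioning event $\{x_m\le 1\}$ the other coordinates range over $(0,n/k]$, and $\{x_i>1\}$ has asymptotic conditional mass $1-\chi^\star_{im}>0$. So the log-moments genuinely involve the region $x_i\in(1,n/k]$, on which the assumption with $q=k/n$ says nothing.

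The missing idea is an extrapolation in two parts. For $t\in[1,T]$, apply the assumption at $q'=qt$ and use homogeneity of the limit; this gives an error $\tilde K q^{\beta'}t^{1+\beta'}$ that grows with $t$. For $t>T$, use monotonicity in $t$ together with the decay of $\mathbb P(Y_i<1/t\mid Y_m>1)$, which is Gaussian in $\log t$ because $\Gamma^\star_{im}<\infty$. This quantitative decay, not mere finiteness of the limiting moments, is what Assumption~\ref{aspt:vario} is really needed for.

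Taking $T=(n/k)^{\eta}$ gives $(k/n)^{\beta'-\eta(1+\beta')}\log(n/k)^2$, that is, every $\beta<\beta'$ but not $\beta'$ itself. This is the actual origin of the loss of exponent; absorbing constants can never force you to lower an exponent. This step, and the stochastic term (whose fluctuations are of order $\Gamma^\star_{ij}/\sqrt k$), also make $C,c,M$ depend on an upper bound for the entries of $\Gamma^\star$. Your closing claim about the constants needs that caveat.

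\emph{Rank term.} A global Dvoretzky--Kiefer--Wolfowitz bound gives $\sup_x|\hat F_i(x)-F_i(x)|\lesssim\sqrt{t/n}$. After multiplication by $n/k$ this is $\sqrt{n/k}\,\sqrt{t/k}$, too large by the diverging factor $\sqrt{n/k}$. You need a variance-localised exponential inequality for the uniform tail empirical and tail quantile processes (Bernstein or Talagrand on sets of probability of order $k/n$). Because $\hat F_i$ enters through a logarithm, this inequality must also hold in relative form: for the ratio of empirical to true tail probabilities, uniformly over $x\in[x_0,T]$. Such ratio control only holds for $x_0$ well above $1/k$, the scale of the smallest rescaled ranks. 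The region below $x_0$ must therefore be cut off and bounded by a crude counting argument, and this is where your truncation of the logarithm has to be placed.
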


Proposition~\ref{prop:conc-vario} shows that the empirical variogram converges uniformly towards its theoretical counterpart at the rate $\delta_n$. As shown in the next subsection, this concentration inequality extends to the empirical weights introduced in \eqref{eq:empiciral_weights}.

We shall remark that, although we focus on the empirical variogram estimator \eqref{vario}, the subsequent analysis only requires a concentration bound of the form \eqref{eq:concentration_bound_variogram}. Consequently, the asymptotic results established in this paper remain valid for any variogram estimator satisfying such a bound.

\subsection{Concentration of the empirical weights}
\label{weights-sect}

The fusion penalty introduced in Section \ref{method} depends on the empirical weights given in \eqref{eq:empiciral_weights}, which serve as estimators for the oracle weights defined in Section \ref{sec:prob} as $w^\star_{ij} = 1$ if $i$ and $j$ belong to the same cluster, and $0$ otherwise. Hence, controlling the estimation error of these weights is essential for analysing the behaviour of the HRC estimator $\hat \Theta_n(\lambda)$. The concentration inequality established above immediately transfers to the empirical weights.

\begin{proposition}
  \label{prop:w-concentration}
   Consider the sequences $(\delta_n)_n$ as given in \eqref{eq:_delta_n_emp_vario} and $(\zeta_n)_n$, which satisfies $\zeta_n =o(\delta_n^{-2})$ and $\zeta_n \rightarrow \infty$ as $n\to \infty$. Let also $\mu^\star = \min_{(i,j) \notin T^\star} D(\vect{\Gamma}^\star_{i\cdot}, \vect{\Gamma}^\star_{j\cdot})$ denote the minimum distance between all different clusters with respect to $\vect{\Gamma}^\star$. We define the sequence     \begin{equation*}
    a_n = \max\left(1-\exp\left\{-4(d-2)^2\zeta_n\delta_n^2\right\}, \exp\left\{-\zeta_n (\mu^\star-2(d-2)\delta_n )^2\right\}\right)\,,
    \end{equation*}
    which converges to $0$ as $n \to \infty$. On the event $\{||\hat \Gamma_n - \Gamma^\star||_\infty \le \delta_n\}$, if $\mu^\star > 2(d-2)\delta_n$ then
    \begin{equation*}
        \max_{1 \le i,j \le d} |\hat w^{(n)}_{ij} - w^\star_{ij}| \le a_n\,.
    \end{equation*}
Consequently, under the conditions of Proposition \ref{prop:conc-vario}, we immediately obtain the following estimation rate:
    \begin{equation*}
    \max_{1 \le i,j \le d} |\hat w^{(n)}_{ij} - w^\star_{ij}| = O_{\p}(a_n)\quad \quad \text{as } n \to \infty \,.
    \end{equation*}
\end{proposition}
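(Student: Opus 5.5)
The plan is to reduce the statement to a deterministic perturbation bound on the dissimilarities $D$, and then treat pairs inside $T^\star$ and pairs outside $T^\star$ separately. Throughout we work on the event $\{\|\hat\Gamma_n-\Gamma^\star\|_\infty\le\delta_n\}$.

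\emph{Step 1: a Lipschitz bound for $D$.} For a fixed pair $i\neq j$, view $D(\vect a,\vect b)$ as the Euclidean norm of the vector $(a_k-b_k)_{k\neq i,j}\in\R^{d-2}$. The reverse triangle inequality in $\R^{d-2}$ then gives
\begin{equation*}
\bigl|D((\hat{\vect\Gamma}_n)_{i\cdot},(\hat{\vect\Gamma}_n)_{j\cdot})-D(\vect\Gamma^\star_{i\cdot},\vect\Gamma^\star_{j\cdot})\bigr|
\le \Bigl(\sum_{k\neq i,j}\bigl((\hat\Gamma_n-\Gamma^\star)_{ik}-(\hat\Gamma_n-\Gamma^\star)_{jk}\bigr)^2\Bigr)^{1/2}.
\end{equation*}
Each summand is at most $(2\delta_n)^2$, so the right-hand side is bounded by $2\sqrt{d-2}\,\delta_n\le 2(d-2)\delta_n$ (assuming $d\ge3$; the case $d=2$ is trivial since then $D\equiv 0$).

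\emph{Step 2: identify $T^\star$ through $\Gamma^\star$.} By Subsection~\ref{subsec:block-model}, $\Theta^\star$ is block-structured if and only if $\Gamma^\star$ is, and both have the same partition. Since $D$ ignores the entries in columns $i$ and $j$, two indices in the same cluster satisfy $D(\vect\Gamma^\star_{i\cdot},\vect\Gamma^\star_{j\cdot})=0$. For pairs in distinct clusters, the definition of $\mu^\star$ gives $D(\vect\Gamma^\star_{i\cdot},\vect\Gamma^\star_{j\cdot})\ge\mu^\star$. I expect this identification to be the one point needing care: one must check that $T^\star$, defined via $\Theta^\star$, coincides with the set of zero $\Gamma^\star$-dissimilarities. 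This follows from the inversion-preserves-block-structure remark, together with the requirement $\mu^\star>0$ implicit in the statement.

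\emph{Step 3: the two cases.} Take first $(i,j)\in T^\star$. Then $w^\star_{ij}=1$, and by Step~1 the empirical dissimilarity satisfies $\hat D\le 2(d-2)\delta_n$. Hence
\begin{equation*}
|\hat w^{(n)}_{ij}-1| = 1-e^{-\zeta_n\hat D^2}\le 1-\exp\{-4(d-2)^2\zeta_n\delta_n^2\}.
\end{equation*}
Take next $(i,j)\notin T^\star$. Then $w^\star_{ij}=0$, and Step~1 gives $\hat D\ge\mu^\star-2(d-2)\delta_n>0$. Squaring is monotone on positive numbers, so
\begin{equation*}
\hat w^{(n)}_{ij}\le\exp\{-\zeta_n(\mu^\star-2(d-2)\delta_n)^2\}.
\end{equation*}
Diagonal pairs are trivial, since both weights equal $1$ under the natural convention. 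Taking the maximum over all pairs gives the bound $a_n$.

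\emph{Step 4: $a_n\to0$ and the $O_\p$ statement.} Since $\zeta_n\delta_n^2\to0$ and $d$ is fixed, the first term of $a_n$ tends to $0$. Since $\delta_n\to0$ (because $k\to\infty$ and $k/n\to0$), eventually $\mu^\star-2(d-2)\delta_n\ge\mu^\star/2$; as $\zeta_n\to\infty$, the second term is then at most $e^{-\zeta_n\mu^{\star2}/4}\to0$.

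For the stochastic rate, fix $\varepsilon$ in Proposition~\ref{prop:conc-vario}. With probability at least $1-\varepsilon$, and for $n$ large enough that $\mu^\star>2(d-2)\delta_n$, the deterministic bound holds. The remaining technical point is that $\delta_n=\delta_n(\varepsilon)$ depends on $\varepsilon$ only through $\log(Md^3/\varepsilon)$. So $\delta_n(\varepsilon)\le C_\varepsilon\,\delta_n$ for a reference choice of $\varepsilon$, and $a_n$ computed with $\delta_n(\varepsilon)$ stays within a constant multiple of the reference $a_n$, at least for the first term, where $1-e^{-x}\sim x$. This is what yields $O_\p(a_n)$.
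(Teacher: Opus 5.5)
Your proof is correct and follows the paper's route. Like the paper, you use the (reverse) triangle inequality for $D$ to get the $2(d-2)\delta_n$ perturbation, split into same-cluster and different-cluster pairs with monotonicity of $t\mapsto e^{-\zeta t^{2}}$, and restrict to the high-probability event of Proposition~\ref{prop:conc-vario}. Your attention to $\delta_n=\delta_n(\varepsilon)$ depending on $\varepsilon$ goes beyond the paper, which ignores it. To finish that step for the second term of $a_n$, split according to whether $\zeta_n\delta_n\le 1$. If it is, the ratio of the two exponentials is bounded by a constant. If not, $e^{-\zeta_n\mu^{\star 2}/4}=O(\zeta_n^{-1})$, which is dominated by the reference first term $\asymp\zeta_n\delta_n^{2}\ge\zeta_n^{-1}$.
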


Proposition~\ref{prop:w-concentration} shows that the empirical weights converge uniformly towards their oracle counterparts. This result justifies the use of empirical exponential weights in place of the unavailable oracle weights and constitutes the key ingredient in the analysis of the penalised estimator.

\subsection{Consistency of the HRC estimator and partition recovery}

We now establish the statistical guarantees of the HRC estimator $\hat \Theta_n(\lambda)$. We first derive a finite-sample error bound for this estimated HR precision matrix. We then show that this bound implies consistent recovery of the underlying block partition.

To this end, following the notation introduced in Section \ref{method}, recall that the unknown partition $\mathcal{C}$ can be equivalently represented through the set $T^\star$ of pairs of variables belonging to the same block:
\begin{equation*}
T^\star = \left\{(i,j) \in V^2,\, i < j : D(\vect{\Theta}^\star_{i\cdot}, \vect{\Theta}^\star_{j \cdot}) = 0\right\}\,.
\end{equation*}
For $\varepsilon_f > 0$, we consider an estimator $\hat T_\lambda(\varepsilon_f)$ of this set:
\begin{equation*}
\hat T_\lambda(\varepsilon_f) = \left\{(i,j) \in V^2,\, i < j : D(\hat{\vect{\Theta}}_n(\lambda)_{i\cdot}, \hat{\vect{\Theta}}_n(\lambda)_{j \cdot}) \le \varepsilon_f \right\}\,.
\end{equation*}
The estimator $\hat T_\lambda(\varepsilon_f)$ encodes the partition produced by the fusion step with fusion threshold $\varepsilon_f$: a pair of variables belongs to this set if and only if it belongs to the same estimated cluster. 

The following theorem provides a deterministic error bound on the estimation error of the HRC estimator $\hat \Theta_n(\lambda)$. As a consequence, consistency of $\hat\Theta_n(\lambda)$ follows directly from the concentration properties of the empirical variogram.

\begin{theorem}[Error bound and consistency of the estimator]
  \label{theo:estimator_consistency}
  Consider the sequences $(\delta_n)_n$ and $(a_n)_n$ as given in \eqref{eq:_delta_n_emp_vario} and in Proposition \ref{prop:w-concentration}, respectively. Let also $\rho^\star = \max_{i,j \in V} D(\vect{\Theta}^\star_{i \cdot}, \vect{\Theta}^\star_{j \cdot})$ denotes the maximum distance between all different clusters with respect to $\Theta^\star$. We define the sequence
  \begin{equation*}
      b_n(\lambda) := \frac{\delta_n + \lambda 4(d-2) \rho^\star a_n + \sqrt{\Lambda'_n}}{\frac{S}{d^2} -8\lambda (d-2)^2a_n}\,,
  \end{equation*}
  where $\Lambda'_n = (\delta_n + \lambda 4(d-2) \rho^\star a_n)^2 + \frac{2S}{d^2}(\lambda (\rho^\star)^2 a_n +\frac{||\Theta^\star||_1}{d^2} \delta_n)$.
   Under Assumption \ref{aspt-3}, the HRC estimator $\hat\Theta_n(\lambda)$ satisfies
  \begin{equation*}
      ||\hat\Theta_n(\lambda) - \Theta^\star||_\infty \le b_n(\lambda),
  \end{equation*}
   on the event $\{||\hat \Gamma_n - \Gamma^\star||_\infty \le \delta_n\}$, provided that $\lambda \le \frac{S}{8d^2(d-2)^2} a_n^{-1}$.
  Consequently, under Assumption \ref{aspt-3} and the conditions of Proposition \ref{prop:conc-vario}, we obtain the following estimation rate for all $\lambda >0$
  \begin{equation*}
      ||\hat\Theta_n(\lambda) - \Theta^\star||_\infty = O_{\p}\left(\sqrt{\delta_n + \tilde C \lambda a_n}\right)\,,
      \quad \text{as } n \to \infty\,,
  \end{equation*}
  where $\tilde C$ is a constant depending quadratically on the dimension $d$, as well as on $\rho^\star$ and $||\Theta^\star||_1$.
\end{theorem}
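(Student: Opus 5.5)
The plan is the standard basic-inequality argument for penalised M-estimators, combined with the strong convexity in Assumption \ref{aspt-3}. Write $\hat\Theta=\hat\Theta_n(\lambda)$ and $\Delta=\hat\Theta-\Theta^\star$. Note that $\Delta$ is symmetric with null row sums.

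\textbf{Step 1: basic inequality.} Since $\hat\Theta$ minimises $L^{(n)}_{\mathcal P}(\cdot,\lambda)$, we have $L^{(n)}_{\mathcal P}(\hat\Theta,\lambda)\le L^{(n)}_{\mathcal P}(\Theta^\star,\lambda)$. Decompose the empirical loss as $L_n(\Theta)=L^\star(\Theta)-\frac12\text{tr}((\hat\Gamma_n-\Gamma^\star)\Theta)$, where $L^\star$ uses $\Gamma^\star$. Then $\Theta^\star$ minimises $L^\star$, so its gradient vanishes on the tangent space of $\SPd$. Strong convexity then gives
\[
L^\star(\hat\Theta)-L^\star(\Theta^\star)\ge \tfrac S2\|\Delta\|_F^2\ge \tfrac{S}{2}\|\Delta\|_\infty^2 .
\]
Combining these yields
\[
\tfrac S2\|\Delta\|_F^2\le \tfrac12|\text{tr}((\hat\Gamma_n-\Gamma^\star)\Delta)| + \lambda\sum_{i<j}\hat w_{ij}\big(D^2(\Theta^\star_{i\cdot},\Theta^\star_{j\cdot})-D^2(\hat\Theta_{i\cdot},\hat\Theta_{j\cdot})\big).
\]
On the event $\{\|\hat\Gamma_n-\Gamma^\star\|_\infty\le\delta_n\}$, the trace term is at most $\delta_n\|\Delta\|_1$. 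I will bound this by $\delta_n(\|\hat\Theta\|_1+\|\Theta^\star\|_1)$ or by $d^2\delta_n\|\Delta\|_\infty$. Both forms appear in $b_n$: the linear $\delta_n$ term and the $\|\Theta^\star\|_1\delta_n/d^2$ term.

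\textbf{Step 2: penalty difference.} Split the sum over $T^\star$ and its complement. On $T^\star$ we have $D(\Theta^\star_{i\cdot},\Theta^\star_{j\cdot})=0$, so those terms contribute $-\lambda\hat w_{ij}D^2(\hat\Theta_{i\cdot},\hat\Theta_{j\cdot})\le0$ and can be dropped. Off $T^\star$, Proposition \ref{prop:w-concentration} gives $\hat w_{ij}\le a_n$. The triangle inequality gives $D(\hat\Theta_{i\cdot},\hat\Theta_{j\cdot})\ge D^\star_{ij}-D(\Delta_{i\cdot},\Delta_{j\cdot})$, where $D^\star_{ij}=D(\Theta^\star_{i\cdot},\Theta^\star_{j\cdot})\le\rho^\star$. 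Expanding the square,
\[
(D^\star_{ij})^2-D^2(\hat\Theta_{i\cdot},\hat\Theta_{j\cdot})\le 2\rho^\star D(\Delta_{i\cdot},\Delta_{j\cdot})+D^2(\Delta_{i\cdot},\Delta_{j\cdot}).
\]
Next use $D(\Delta_{i\cdot},\Delta_{j\cdot})\le 2\sqrt{d-2}\|\Delta\|_\infty$, or a cruder $2(d-2)\|\Delta\|_\infty$ to match the constants. Summing over pairs and absorbing counts gives a contribution of the form
\[
\lambda a_n\big((\rho^\star)^2 + c_1(d-2)\rho^\star\|\Delta\|_\infty + c_2(d-2)^2\|\Delta\|_\infty^2\big),
\]
with the constants $4$, $8$ arranged to match $b_n$. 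The $(\rho^\star)^2$ term also covers the case where I bound $(D^\star)^2$ directly rather than expanding.

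\textbf{Step 3: quadratic inequality.} Set $x=\|\Delta\|_\infty$, normalise by $d^2$ (which is where $S/d^2$ comes from, e.g.\ using $\|\Delta\|_F\ge x$ with pair counts $\le d^2/2$), and collect terms. This gives
\[
\big(\tfrac{S}{d^2}-8\lambda(d-2)^2a_n\big)x^2-2\big(\delta_n+4\lambda(d-2)\rho^\star a_n\big)x-\tfrac{2S}{d^2}\cdot\text{(const)}\le 0 .
\]
The condition $\lambda\le \frac{S}{8d^2(d-2)^2}a_n^{-1}$ makes the leading coefficient nonnegative; I would treat the equality case separately or assume strict inequality. Solving the quadratic then gives $x\le b_n(\lambda)$ with discriminant $\Lambda'_n$.

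\textbf{Step 4: rate.} As $a_n\to0$, the denominator tends to $S/d^2$, and $b_n=O(\delta_n+\lambda a_n+\sqrt{\lambda a_n+\delta_n})=O(\sqrt{\delta_n+\tilde C\lambda a_n})$. For fixed $\lambda$ the constraint on $\lambda$ holds eventually. Proposition \ref{prop:conc-vario}, with $\varepsilon$ arbitrary, makes the event have probability at least $1-\varepsilon$, which yields the $O_\p$ statement.

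\textbf{Main obstacle.} The hardest part is the bookkeeping in Steps 2 and 3: matching the norms ($\infty$ versus Frobenius versus $\ell_1$) and the combinatorial counts so that the exact constants in $b_n$ and $\Lambda'_n$ come out. A second point needs care: Assumption \ref{aspt-3} must be applied along the segment $[\Theta^\star,\hat\Theta]$, which stays inside $\SPd$ by convexity of the rank-$(d-1)$ face. Handling this through a Taylor expansion with integral remainder, rather than through a gradient inequality, is what I would try first.
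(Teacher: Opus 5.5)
Your argument is correct, and it takes a genuinely different route from the paper's, one that is in fact sharper.

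\textbf{The paper's route.} It uses a Kato-type localisation. It introduces the oracle criterion $L^\star_{\mathcal P}$, built with $\Gamma^\star$ \emph{and} the oracle weights $w^\star_{ij}$. The oracle penalty vanishes at $\Theta^\star$, so $\Theta^\star$ minimises this criterion. Convexity along rays then shows that $\sup_{\|\Theta-\Theta^\star\|_\infty\le\varepsilon}|L^{(n)}_{\mathcal P}(\Theta,\lambda)-L^\star_{\mathcal P}(\Theta,\lambda)|<\frac S4\varepsilon^2$ forces $\|\hat\Theta_n(\lambda)-\Theta^\star\|_\infty\le\varepsilon$. This deviation is bounded by $\frac12(d^2\varepsilon+\|\Theta^\star\|_1)\delta_n+\frac\lambda2 d^2(2(d-2)\varepsilon+\rho^\star)^2a_n$.

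Because the paper controls the deviation itself rather than its increment between $\hat\Theta$ and $\Theta^\star$, the $\varepsilon$-free terms in $\|\Theta^\star\|_1\delta_n$ and $\lambda d^2(\rho^\star)^2a_n$ survive. These are exactly what produce $\Lambda'_n$ and the square-root rate. Comparing with $W^\star$ also requires $|\hat w_{ij}-1|\le a_n$ on $T^\star$.

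\textbf{Your route.} Your basic inequality works with increments. The trace term is therefore linear in $x=\|\Delta\|_\infty$, and the within-cluster penalty terms are discarded by sign, so only $\hat w_{ij}\le a_n$ off $T^\star$ is used. The paper's device is only more generic: it needs nothing beyond uniform closeness of two criteria near $\Theta^\star$.

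\textbf{Step~3 is unnecessary.} You should not try to reproduce $\Lambda'_n$ there; your ``(const)'' term would not give it exactly anyway. With $D(\Delta_{i\cdot},\Delta_{j\cdot})\le 2(d-2)x$ and at most $d^2/2$ pairs, Steps~1--2 already give
\[
\Big(\frac{S}{d^2}-4\lambda(d-2)^2a_n\Big)\,x\;\le\;\delta_n+4\lambda(d-2)\rho^\star a_n .
\]
Under $\lambda\le\frac{S}{8d^2(d-2)^2}a_n^{-1}$, the left coefficient is at least $\frac{S}{2d^2}>0$ and no smaller than the denominator of $b_n(\lambda)$. The right side is at most the numerator of $b_n(\lambda)$. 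Hence $x\le b_n(\lambda)$ directly, including the boundary case where that denominator vanishes. This also gives the better rate $O_{\mathbb P}(\delta_n+\lambda a_n)$, which implies the stated one.

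\textbf{A further simplification.} Write $\hat D_{ij}=D(\hat\Theta_{i\cdot},\hat\Theta_{j\cdot})$ and factor $(D^\star_{ij})^2-\hat D_{ij}^2=(D^\star_{ij}-\hat D_{ij})(D^\star_{ij}+\hat D_{ij})$. This yields $(D^\star_{ij})^2-\hat D_{ij}^2\le 2\rho^\star D(\Delta_{i\cdot},\Delta_{j\cdot})$. That removes the quadratic term and the restriction on $\lambda$ entirely, giving $x\le \frac{d^2}{S}\big(\delta_n+4\lambda(d-2)\rho^\star a_n\big)$.
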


Theorem~\ref{theo:estimator_consistency} provides a finite-sample error bound for the HRC estimator on the high-probability event of Proposition~\ref{prop:conc-vario}. The bound naturally decomposes into two contributions: the estimation error of the empirical variogram and the regularisation error introduced by the fusion penalty.

This result immediately translates into a recovery guarantee for the latent partition.
Specifically, the following theorem shows that blocks in $\Theta^\star$ are asymptotically identified by the fusion procedure.

\begin{theorem}[Partition recovery]
  \label{theo:partition_consistency}
  Let $\varepsilon^\star = \min_{(i,j) \notin T}  D(\vect{\Theta}^\star_{i\cdot}, \vect{\Theta}^\star_{j \cdot})$. Under Assumption \ref{aspt-3}, the dissimilarity between two columns of the matrix $\hat \Theta_n(\lambda)$ satisfies
  \begin{equation*}
      \begin{cases}
        D(\hat{\vect{\Theta}}_n(\lambda)_{i\cdot}, \hat{\vect{\Theta}}_n(\lambda)_{j \cdot}) \le 2(d-2) b_n(\lambda),& \text{if } (i,j) \in T^\star, \\
        D(\hat{\vect{\Theta}}_n(\lambda)_{i\cdot}, \hat{\vect{\Theta}}_n(\lambda)_{j \cdot}) \ge \varepsilon^\star - 2(d-2) b_n(\lambda),& \text{otherwise,}
      \end{cases}
  \end{equation*}
  on the event $\{||\hat \Gamma_n - \Gamma^\star||_\infty \le \delta_n\}$, provided that $\lambda \le \frac{S}{8d^2(d-2)^2} a_n^{-1}$.
  
  Consequently, under Assumption \ref{aspt-3} and the conditions of Proposition \ref{prop:conc-vario}, we have for any $\lambda>0$ and any $\varepsilon_f < \varepsilon^\star$, 
  \begin{equation*}
      \p(\hat T_{\lambda}(\varepsilon_f) = T^\star) \to 1\,, \quad \text{as } n \to \infty\,.
  \end{equation*}
\end{theorem}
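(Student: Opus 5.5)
The proof will be deterministic on the event $\{\|\hat\Gamma_n-\Gamma^\star\|_\infty\le\delta_n\}$, with Theorem~\ref{theo:estimator_consistency} as the main input. The probabilistic statement will then follow from Proposition~\ref{prop:conc-vario}.

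First, I bound the dissimilarity $D$ by entrywise errors. For any two symmetric matrices $A,B$ and indices $i,j$, the triangle inequality for the Euclidean norm restricted to the coordinates $k\neq i,j$ gives
\begin{equation*}
\bigl|D(\vect{A}_{i\cdot},\vect{A}_{j\cdot})-D(\vect{B}_{i\cdot},\vect{B}_{j\cdot})\bigr|\le \Bigl(\sum_{k\neq i,j}\bigl((A-B)_{ik}-(A-B)_{jk}\bigr)^2\Bigr)^{1/2}.
\end{equation*}
Each of the $d-2$ summands is at most $(2\|A-B\|_\infty)^2$. The right-hand side is therefore at most $2\sqrt{d-2}\,\|A-B\|_\infty$, which is in turn at most $2(d-2)\|A-B\|_\infty$ for $d\ge3$. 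This is the same crude bound used to define $a_n$ in Proposition~\ref{prop:w-concentration}.

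Next, I apply this with $A=\hat\Theta_n(\lambda)$ and $B=\Theta^\star$. On the event, and under the constraint $\lambda\le \frac{S}{8d^2(d-2)^2}a_n^{-1}$, Theorem~\ref{theo:estimator_consistency} gives $\|\hat\Theta_n(\lambda)-\Theta^\star\|_\infty\le b_n(\lambda)$. If $(i,j)\in T^\star$, then $D(\vect{\Theta}^\star_{i\cdot},\vect{\Theta}^\star_{j\cdot})=0$, so $D(\hat{\vect\Theta}_{i\cdot},\hat{\vect\Theta}_{j\cdot})\le 2(d-2)b_n(\lambda)$. Otherwise $D(\vect{\Theta}^\star_{i\cdot},\vect{\Theta}^\star_{j\cdot})\ge\varepsilon^\star$, and the reverse triangle inequality gives the lower bound $\varepsilon^\star-2(d-2)b_n(\lambda)$. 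This proves the deterministic part.

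For the consistency statement, fix $\lambda>0$ and $\varepsilon_f<\varepsilon^\star$. Since $a_n\to0$, the condition $\lambda\le \frac{S}{8d^2(d-2)^2}a_n^{-1}$ holds for all $n$ large enough. I also need $b_n(\lambda)\to0$, which is the main point to check. In the formula for $b_n$, the numerator terms $\delta_n$, $\lambda\rho^\star a_n$ and $\sqrt{\Lambda'_n}$ all tend to $0$, because $\delta_n\to0$ and $a_n\to0$. The denominator tends to $S/d^2>0$. Hence $b_n(\lambda)\to0$; this is consistent with the $O_\p(\sqrt{\delta_n+\tilde C\lambda a_n})$ rate in Theorem~\ref{theo:estimator_consistency}.

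For $n$ large we then have both $2(d-2)b_n(\lambda)\le\varepsilon_f$ and $\varepsilon^\star-2(d-2)b_n(\lambda)>\varepsilon_f$. Here I use $\varepsilon_f>0$ for the first inequality and $\varepsilon_f<\varepsilon^\star$ for the second. On the event, every pair in $T^\star$ then has estimated dissimilarity at most $\varepsilon_f$, and every pair outside $T^\star$ has dissimilarity strictly larger than $\varepsilon_f$. This is exactly $\hat T_\lambda(\varepsilon_f)=T^\star$.

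Finally, $\p(\hat T_\lambda(\varepsilon_f)=T^\star)\ge 1-\p(\|\hat\Gamma_n-\Gamma^\star\|_\infty>\delta_n)\ge1-\varepsilon$ by Proposition~\ref{prop:conc-vario}. To make this tend to $1$, take $\varepsilon=\varepsilon_n\to0$ slowly enough that $\delta_n(\varepsilon_n)\to0$ still holds; for example $\log(1/\varepsilon_n)=o(k)$ with $k\ge n^\alpha$. Then the previous steps apply along this sequence, and letting $n\to\infty$ gives $\p(\hat T_\lambda(\varepsilon_f)=T^\star)\to1$.
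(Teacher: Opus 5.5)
Your proposal is correct and follows essentially the paper's route. You control the change in $D$ by $2(d-2)\|\hat\Theta_n(\lambda)-\Theta^\star\|_\infty$ through the triangle inequality, insert the bound of Theorem~\ref{theo:estimator_consistency}, and use $b_n(\lambda)\to0$ to place the two regimes on either side of $\varepsilon_f$. Your reverse triangle inequality on the fixed coordinate set $V\setminus\{i,j\}$ is in fact a cleaner version of the paper's step, which writes $D(\vect{\Theta}_{i\cdot},\vect{\Theta}^\star_{i\cdot})$ without saying which coordinates are excluded.

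One small fix is needed in the last step. The condition $\log(1/\varepsilon_n)=o(k)$ only guarantees $\delta_n(\varepsilon_n)\to0$, not $a_n\to0$, which needs $\zeta_n\delta_n(\varepsilon_n)^2\to0$. It is simpler to keep $\varepsilon$ fixed, deduce $\liminf_{n}\p(\hat T_\lambda(\varepsilon_f)=T^\star)\ge1-\varepsilon$, and then let $\varepsilon\downarrow0$.
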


\begin{remark}
In the error bound in Theorem \ref{theo:estimator_consistency}, the first term $\delta_n$ reflects the statistical accuracy of the empirical variogram $\hat \Gamma_n$ whereas the second term corresponds to the bias introduced by the fusion penalty and increases with the regularisation parameter $\lambda$. In particular, choosing $\lambda =\lambda_n=O(\delta_n)$ yields
    \begin{equation*}
    ||\hat\Theta_n(\lambda_n) - \Theta^\star||_\infty = O_{\p}(\sqrt{\delta_n}),
    \end{equation*}
    thereby approaching the $O_{\mathbb{P}}(\delta_n)$ rate of the unpenalized estimator while promoting the desired block structure. 
\end{remark}

Taken together, the previous results show that the HRC procedure consistently estimates both the HR precision matrix and its latent block structure. The finite-sample behaviour of the estimator is investigated in the next section through an extensive simulation study.

%% file: sections/simulation.tex
\section{Simulation study}
\label{sect-sim}

This section investigates the finite-sample performance of the HRC procedure. More specifically, it pursues two objectives: assessing the accuracy of the HRC estimator and evaluating the ability of the proposed methodology to recover the underlying block structure. Unless otherwise stated, all results are based on $N=100$ independent Monte Carlo replicates.

\subsection{Simulation settings}

For each replicate, we first construct a block-structured HR precision matrix $\Theta^\star$ by specifying a partition $\mathcal C = C_1, \ldots, C_K$ together with its associated reduced matrix $R$. The corresponding variogram matrix $\Gamma^\star$ is then deduced from the one-to-one correspondence between the two parameterisations. 
Samples $\vect{X}_1, \dots, \vect{X}_n$ are generated from a max-stable distribution following the exact simulation algorithm of \cite{dombry2016exact}, implemented through the \texttt{rmstable} function of the \texttt{graphicalExtremes} package. The simulated observations are subsequently transformed into multivariate Pareto data using the \texttt{data2mpareto} function, from which the empirical variogram estimator $\hat \Gamma_n$ of \cite{engelke2026learning} is computed.

The sample size is taken as $n \in \{5000, 10000\}$. In accordance with the asymptotic framework developed in Section \ref{framework}, inference is based on the largest $k = \lfloor n^{0.7}\rfloor$ observations. The weight parameter is fixed to $\zeta = \log(n)^s$ with $s\in \{1, 2\}$, following the theoretical recommendations of Section \ref{method}. Following \cite{chen2015convex} and \cite{Chi02102015}, the performance of the procedure is also improved by assigning zero weights to non-neighbour pairs using the $h$-nearest neighbours. To avoid the introduction of a new parameter, the parameter $h$ is set to $5$.

The convergence tolerance and maximum number of iterations are set to $\varepsilon_c = 10^{-7}$ and $t_{\max} = 1000$, respectively. The fusion threshold is defined as $\varepsilon_f = \kappa \text{ median}_{i,j} D({\hat{\vect{\Theta}}_n(0)}_{i \cdot}, {\hat{\vect{\Theta}}_n(0)}_{j \cdot})$ to account for the scale of the pairwise column dissimilarities, with $\kappa = \log(n)^{-1}$. This data-adaptive choice automatically scales the fusion threshold to the magnitude of the estimated HR precision matrix while gradually decreasing the fusion threshold as the sample size increases.

For each replicate, the regularisation parameter $\lambda$ and the $s$ parameter are selected by a five-fold cross-validation over an $(s, \lambda)$ grid, where $\lambda$ is increased until all the variables are merged. The selected value of $\lambda$ and $s$ minimise the unpenalised negative log-likelihood
\begin{equation*}
\frac 15 \sum_{k=1}^5 \left\{ -\log(|\hat \Theta_{-k}|_+) - \frac 12 \text{tr}(\hat \Theta_{-k}\hat\Gamma_k)\right\},
\end{equation*}
where $\hat \Theta_{-k}$ denotes the HRC estimator without the $k$-th fold and $\hat \Gamma_k$ is the empirical variogram estimated from this fold.

The quality of the estimated HR precision matrix is assessed through the Frobenius norm $\| \hat\Theta_n(\lambda) - \Theta^\star \|_F$. To evaluate the recovery of the block structure, we consider the estimated number of clusters together with the Adjusted Rand Index (ARI), a standard measure of agreement between two partitions, see \cite{rand_1971} for an introduction on this measure.

Throughout the study, we compare three estimators of the HR precision matrix. The first one corresponds to the minimiser of the unpenalised optimisation problem \eqref{nlikelihood}, that is, with $\lambda=0$, and serves as a benchmark. The two other ones are the  HRC estimator and its refitted version defined at the end of Section \ref{method}. These three estimators are denoted \textit{Initial Theta}, \textit{HRC-estimator} and \textit{HR-refit} in the following graphs and comments.

The method is implemented in a \texttt{R} package called \texttt{HRClusterpath} available on \textit{GitHub}\footnote{\url{https://github.com/alxkpl/HRClusterpath}} and the results and figures can be reproduced with the code at \url{https://github.com/alxkpl/HRC-simu}.

\subsection{Results}

The performance of the HRC procedure is investigated under the different scenarios presented below. The results are illustrated graphically in this subsection, while a complete numerical summary is provided in the Supplementary Materials \ref{appendix:table.results}.

\subsubsection{Dimension \texorpdfstring{$d=15$}{d=15}}

We first consider a moderate-dimensional setting ($d=15$), for which we successively consider increasingly challenging scenarios.

We begin with an ideal balanced block model with $K=3$ clusters of size $15$.
The associated reduced matrix $R$ is given by
\begin{equation}
    \label{eq:r-simu}
    \footnotesize{
    R = \begin{pmatrix}
          -0.5 & -0.25 & 0 \\
          -0.25 & -0.75 & -0.25 \\
          0 & -0.25 & -0.5
        \end{pmatrix}
    }
\end{equation}
We then assess the influence of heterogeneous cluster sizes by considering an unbalanced partition while keeping the same reduced matrix $R$ defined in Equation \eqref{eq:r-simu}.
The corresponding clusters are of size $3$, $5$, and $7$.
The numerical results are given in Figure \ref{fig:bal-res15}.

Finally, we investigate another configuration, noised, developed in the Supplementary Materials \ref{appendix:approximate.sim} E.1.

\begin{figure}[!ht]
    \centering
    \includegraphics[width=15cm]{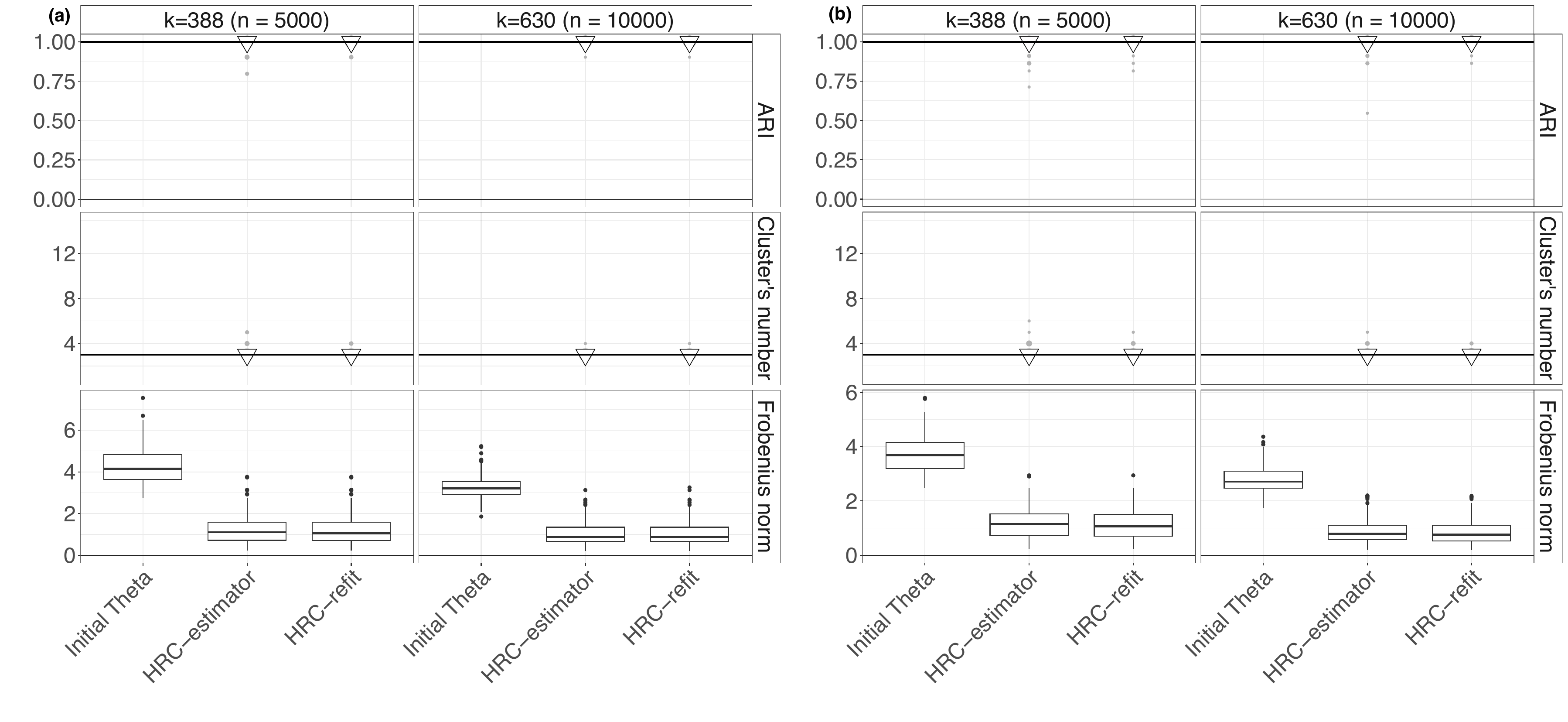}
    \caption{Clustering performance (top two rows) and HR precision matrix estimation error (bottom row) for the balanced (a) and unbalanced (b) structure for $d=15$ and different sample sizes. For the two top rows, the triangle indicates the median over the Monte Carlo replicates. The black lines correspond to ARI perfect clustering (ARI $=1$) and to the true number of clusters, respectively. The bottom row depicts the box-plot of the Frobenius norm $\|\hat \Theta_n(\lambda) - \Theta^\star\|_F$ among the $N=100$ replicates. The clustering metric is omitted for \textit{Initial Theta} because not relevant.}
    \label{fig:bal-res15}
\end{figure}

Several conclusions emerge from Figures \ref{fig:bal-res15}. First, both penalised estimators consistently outperform the unpenalised estimator in terms of Frobenius error, showing that the fusion penalty improves estimation accuracy. Second, block recovery rapidly improves with the sample size and is nearly perfect for $n=10^4$, with ARI values close to one and an almost exact estimation of the number of clusters. Finally, the refitting step systematically improves the HRC estimator by slightly reducing both estimation bias and variability. Similar conclusions hold for balanced and unbalanced partitions, indicating that the HRC procedure is not sensitive to moderate cluster-size heterogeneity.

Taken together, these experiments show that the HRC procedure provides accurate matrix estimation while reliably recovering the underlying block structure. The refitted estimator systematically improves both objectives. It is worth noting that these performances are obtained using only the extreme observations retained by the peaks-over-threshold procedure, namely $k=388$ and $k=630$, corresponding to less than $13\%$ and $7\%$ of the available observations, respectively.

\subsubsection{Dimension \texorpdfstring{$d=60$}{d=60}}

Two scenarios in dimension $d=60$ are proposed. First, we consider a balanced partition with $K=5$ clusters of size $12$, and with reduced matrix
\begin{equation*}
\footnotesize{R
=
\begin{pmatrix}
-0.5 & -0.25 & 0 & 0 & 0 \\
-0.25 & -0.75 & -0.25 & 0 & 0 \\
0 & -0.25 & -0.75 & -0.25 & 0 \\
0 & 0 & -0.25 & -0.75 & -0.25 \\
0 & 0 & 0 & -0.25 & -0.5
\end{pmatrix}\,.}
\end{equation*}
The second scenario corresponds to an unbalanced configuration with $K=3$ clusters of size $10$, $20$, and $30$, respectively. The associated reduced matrix is again the one given in Equation \eqref{eq:r-simu}. The results of these configurations are summarised in Figure \ref{fig:bal-res60}. 

\begin{figure}[!ht]
    \centering
    \includegraphics[width=15cm]{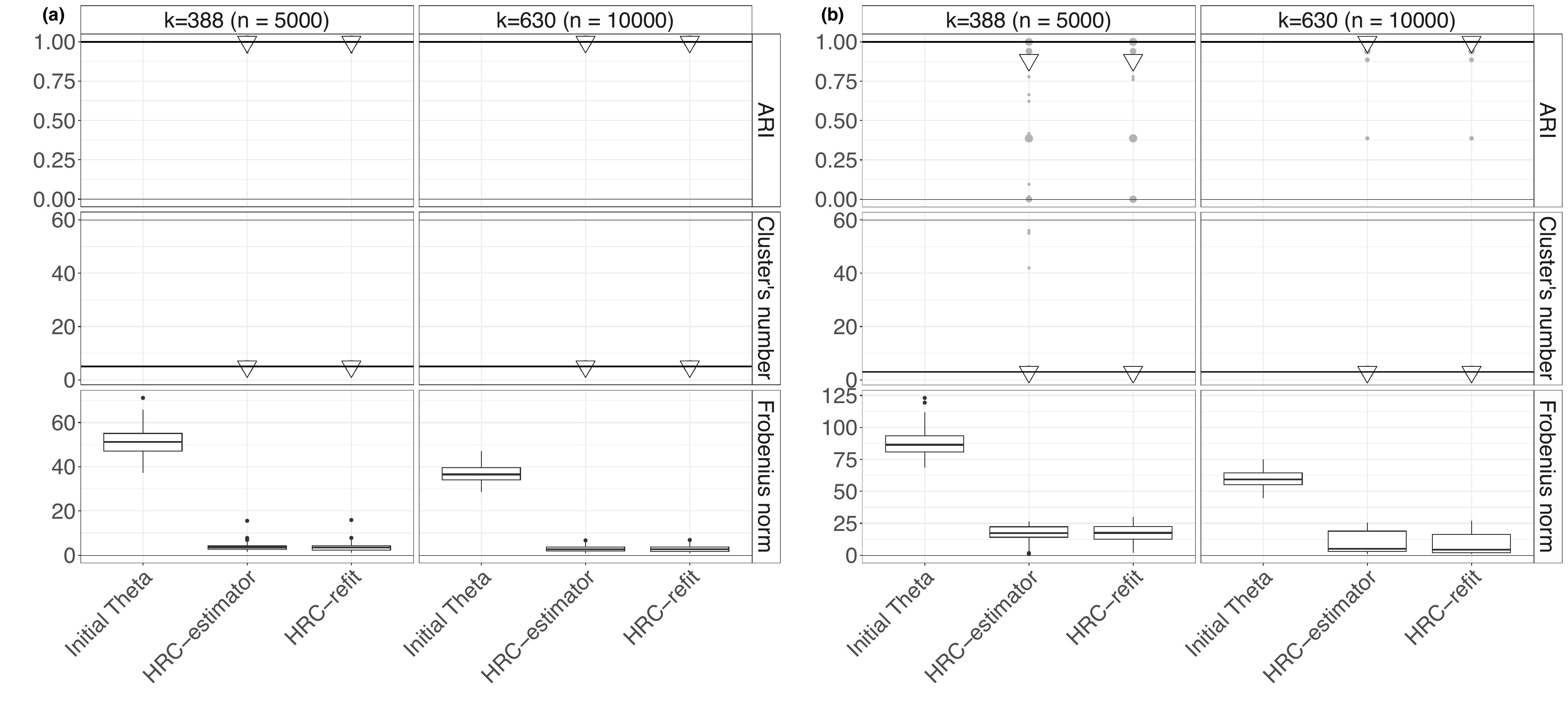}
    \caption{Results with a balanced structure in the coefficients for the balanced with $K=5$ clusters (a) and unbalanced  with $K=3$ clusters (b) for $d=60$ and different sample sizes (column). See Figure \ref{fig:bal-res15} for more details.}
    \label{fig:bal-res60}
\end{figure}

Increasing the dimension from $15$ to $60$ has only a modest impact on the balanced configuration. The clustering accuracy remains almost perfect and the Frobenius error increases only moderately despite the fourfold increase in dimension. The unbalanced configuration is more challenging, especially for $n=5000$, where the variability of the estimated partition increases substantially. Nevertheless, increasing the sample size restores excellent clustering performances.

Overall, the simulation study highlights three main features of our methodology. First, the fusion penalty substantially improves HR precision matrix estimation compared with the unpenalised estimator. Second, the latent block structure is accurately recovered over a broad range of configurations, including heterogeneous cluster sizes and moderate departures from the exact block model. Finally, the refitting step consistently improves both estimation accuracy and clustering performance, making it the preferred estimator in practice.

\section{Application}
\label{sect-app}
We illustrate the proposed HRC procedure on a real-world dataset. The results are presented as a dendrogram, which naturally reflects the hierarchical nature of the algorithm, and are complemented by the analysis of a representative clustering solution which highlights groups of variable with the same dependency structure.

The dataset consists of daily spot foreign exchange rates against the British pound sterling from the $1^{st}$ October 2005 to the $28^{th}$ February 2020, prior to  the COVID-19 pandemic. All the data is available from the website of the Bank of England\footnote{\url{https://www.bankofengland.co.uk/}}.
It contains $n=3642$ daily observations on $d=26$ currencies.
Following \cite{engelke2022structure}, temporal dependence is removed by considering de-GARCHed absolute log-returns, thereby capturing extremes in both positive and negative directions.

The HRC procedure is implemented using the same hyper-parameter settings as in the simulation study. The empirical variogram is computed with $k = \lfloor n^{0.7} \rfloor$, while the adaptive weights are constructed using $h=5$ nearest neighbours and the exponential parameter $\zeta = \log(n)^s$ with $s=1$. The case $s=2$ is not interesting because the induced weights are too low for the HRC procedure to produce meaningful results in terms of hierarchical clustering. A layer of refitted HRC procedure is then added, in order to obtain more reliable results for the coefficients.

To illustrate the possibilities of the procedure, all the clusters, computed along the regularisation parameters, are shown as a dendrogram in Figure~\ref{fig:currency}.

\begin{figure}[!ht]
    \centering
    \includegraphics[width=12cm]{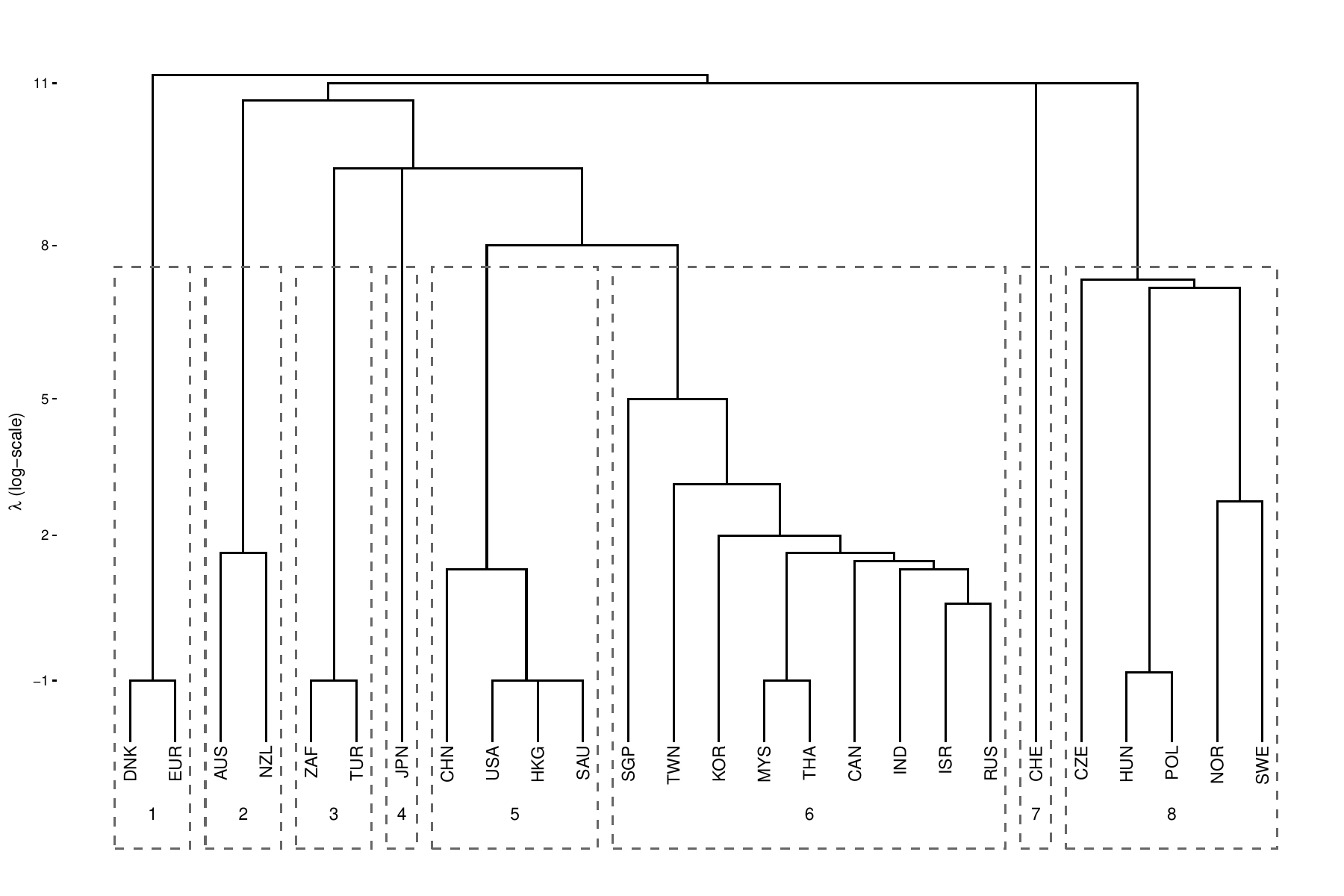}
    \caption{Dendrogram of the hierarchical clustering of daily spot foreign exchange rates against the British pound sterling. The rectangles show an interpretable clustering of the variables with $K=8$. See Table 2 in the Supplementary Materials \ref{appendix:table.currency} for country codes details.}
    \label{fig:currency}
\end{figure}

We propose to examine the results when the cut tree yields $K=8$ clusters. Several of these exhibit meaningful geographical or economic patterns. The largest cluster (shown as group 6) consists predominantly of South-East Asian currencies (SGP, TWN, IND, KOR, THA and MYS), suggesting a common extremal dependence structure within this region. Another cluster (shown as group 5) groups together China (CHN), Hong Kong (HKG), and the United States, three major international financial centres whose currencies exhibit similar extremal interactions. Japan (JPY) and Switzerland (CHE) forms single cluster, indicating a distinct dependence pattern. Finally, two groups of European currencies is recovered as group 8 (CZE, HUN, POL, NOR and SWE) and group 1 (EUR and DNK), consistently with the findings of \cite{wan_zhou}.

Table 3 in the Supplementary materials \ref{appendix:table.application} shows the extremal correlation coefficient between and within the clusters. For instance, it reveals that the group 1 (EUR and DNK) is characterised by a strong inner extremal dependence, and a higher dependence with Switzerland (denoted by group 7). The group 5 and 6 react in the same way towards other currencies but they differ from each other with their internal dependence.

Overall, these results illustrate that the proposed HRC procedure identifies clusters that are not only statistically coherent but also economically interpretable.
Beyond improving estimation through structural regularisation, the proposed methodology provides an interpretable representation of extremal dependence by identifying groups of variables sharing similar interaction patterns.

%% file: sections/conclusion.tex
\section{Conclusion}

The paper introduces a new regularisation framework for estimating HR precision matrices under a block-structure assumption. Rather than exploiting sparsity, the proposed approach promotes homogeneous groups of coefficients through a convex fusion penalty, leading to the simultaneous estimation of both the HR precision matrix and its latent partition. The resulting optimisation problem remains convex and can be solved efficiently using an adaptation of the Clusterpath algorithm originally developed for Gaussian graphical models.

From a theoretical perspective, we have established non-asymptotic concentration bounds for the empirical fusion weights and derived consistency guarantees for both HR precision matrix estimation and partition recovery. To the best of our knowledge, these constitute the first statistical guarantees for fusion-based regularisation in the context of Hüsler–Reiss models. The numerical experiments confirm the practical relevance of the proposed methodology. Across a broad range of balanced, unbalanced, and perturbed block configurations, the HRC procedure accurately recovers the latent partition while providing a satisfactory estimate of the HR precision matrix. These results illustrate that exploiting block homogeneity can substantially improve structural recovery without compromising estimation accuracy.
Beyond its statistical performance, the proposed framework provides an interpretable representation of extremal dependence through groups of variables sharing similar interaction patterns. In this sense, it offers a complementary alternative to sparsity-based regularisation for high-dimensional inference in multivariate extremes.

Several directions deserve further investigation. A first challenge concerns the automatic selection of the regularisation parameter, for instance, through information criteria or stability-based methods. Another promising direction is to relax the exact block assumption by allowing approximately homogeneous groups or hierarchical block structures, which would further broaden the range of practical applications. Finally, although our methodology has been developed for Hüsler–Reiss models, the underlying idea of fusion-based regularisation is considerably more general. Whenever a suitable precision-type parameterisation is available, similar techniques could be developed for other classes of multivariate extreme-value models. More broadly, we believe that this work demonstrates the potential of structural regularisation beyond sparsity and opens the way towards a richer class of parsimonious models for high-dimensional extreme-value statistics.

%% file: sections/appendix.tex
\newpage
\appendix
\label{appendix}

\section{Hüsler-Reiss Clusterpath algorithm}
\label{app:subsec:HR_clusterpath_algo}

The HRC procedure is detailed by the pseudo-code in Algorithm \ref{hrcp-algo}. The function $cluster\_fusion\_step$ and $block\_gradient\_step$ refer to the steps described respectively in the subsections 3.2.1 and 3.2.2.

\begin{algorithm}[h]
\begin{algorithmic} 
\REQUIRE Variogram estimated $\hat \Gamma$, Initial estimates $R^{(0)}$ and $clusters^{(0)}$ weights $W = (w_{ij})_{i,j \in V}$, regularisation $\lambda$, $\varepsilon_f$ fusion threshold, $\varepsilon_g$ the convergence threshold, $t_{max}$ maximal number of iteration.
\ENSURE
\STATE $l^{(0)} \leftarrow L_{\mathcal P}(R^{(0)},~ clusters^{(0)}, \lambda)$
\STATE $l^{(1)} \leftarrow 2 l^{(0)}$
\STATE $K_{max} \leftarrow nrow(R^{(0)})$
\STATE $t \leftarrow 0$
\WHILE{$|l^{(t+1)} / l^{(t)} - 1| > \varepsilon_c$ and $t \le t_{max}$}
\STATE $t \leftarrow t+1$
\STATE $R^{(t)} \leftarrow R^{(t-1)}$
\STATE $cluster^{(t)} \leftarrow cluster^{(t-1)}$
\STATE $(k^*,\ell^*) \leftarrow \arg\min_{k,\ell} \tilde D(\vect{r}^{(t)}_{k\cdot}, \vect{r}^{(t)}_{\ell\cdot})$
\WHILE{$\tilde D(\vect{r}^{(t)}_{k^*\cdot}, \vect{r}^{(t)}_{\ell^*\cdot}) \le \varepsilon_f$}
\STATE $R^{(t)},~ clusters^{(t)} \leftarrow cluster\_fusion\_step(R^{(t)},~ clusters^{(t-1)}, k^*,\ell^*)$
\STATE $K_{max} \leftarrow K_{max} - 1$
\STATE $(k^*,\ell^*) \leftarrow \arg\min_{k,\ell} \tilde D(\vect{r}^{(t)}_{k\cdot}, \vect{r}^{(t)}_{\ell\cdot})$
\ENDWHILE
\FOR{$k = 1, \dots, K_{max}$}
\STATE $R^{(t)} \leftarrow block\_gradient\_step(R^{(t-1)}, clusters^{(t)} , k)$
\ENDFOR
\STATE $l^{(t+1)} \leftarrow L_{\mathcal P}(R^{(t)},~ clusters^{(t)}, \lambda)$
\ENDWHILE
\STATE $\hat R \leftarrow R^{(t)}$
\STATE $clusters_{f} \leftarrow clusters^{(t)}$
\RETURN $\hat R$, $clusters_f$
\end{algorithmic}
\caption{Pseudo algorithm of HRCP with fixed $\lambda$.}
\label{hrcp-algo}
\end{algorithm}

\section{Derivative computation}
\label{derivative-sect}

The function $L$ in Equation \eqref{nllh} involves a generalised determinant, making the computation of the gradient challenging, as it entails evaluating a computationally expensive Moore-Penrose inverse of a matrix. To bypass this computational burden, our procedure relies on an equivalent formulation of $L$, whose gradient evaluation require only a single matrix inversion. This version introduced by \citet[Appendix C]{engelke2025extremal}, is defined by
\begin{equation*}
    \tilde L(\Theta) = - \log (\det(P^\top \Theta P) )- \frac 12 \text{tr}(\Theta PP^\top \Gamma^\star P P^\top), 
    \quad \text{for all } \Theta \in <\vect{1}_d \vect{1}_d^\top>^\perp,
\end{equation*}
with $P$ the full rank matrix such that $PP^\top = \Pi$, the projection matrix of the space $<\vect{1}_d>^\perp$. Let $\tilde L_\pen$ denote the penalised version of $\tilde L$. Computing each block gradient and block Hessian of this alternative objective function is straightforward but requires meticulous care. The results are given below.

We first recall that, at this stage of the algorithm, $K'$ clusters have been identified and the current matrix $\Theta$, associated with the reduced matrix $R$, is written as 
\begin{equation*}
  \Theta = \begin{pmatrix} 
(a_{1} - r_{11})I_{d_1} & 0 & \dots & 0 \\
0 & (a_{2} - r_{22})I_{d_2} & \dots & 0 \\
\vdots & \vdots & \ddots & \vdots \\
0 & 0 & \dots &(a_{K'} - r_{K'K'})I_{d_K'}
\end{pmatrix} + U R U^\top,
\end{equation*}
where $U$ is the cluster matrix defined in Equation \eqref{eq:block_structure_Theta}, adapted to $K'$ clusters.

Let $m \in \{1, \dots, K'\}$. With this configuration, the computation of the gradient and the Hessian for the  $m$-th block involves only the entries of the corresponding column in the reduced matrix $R$. Hence, the $m$-th block gradient is
\begin{equation*}
    \nabla_m \tilde L_\pen(\Theta, \lambda) = \left(\frac{\partial \tilde L_\pen(\Theta, \lambda)}{\partial r_{mk}}\right)_{k = 1,\dots, K'}\, ,
\end{equation*}
and the $m$-th block hessian is
\begin{equation*}
    \nabla^2_m \tilde L_\pen(\Theta, \lambda) = \left( \frac{\partial^2 \tilde L_\pen(\Theta, \lambda)}{\partial r_{mk} \partial r_{m\ell}}\right)_{k,\ell = 1, \dots, K'}\, .
\end{equation*}
These quantities are computed by calculating the gradient and Hessian of $\tilde L$ and $\pen$.

\subsection{Block gradient of \texorpdfstring{$\tilde L(\Theta)$}{L(Theta)}}\label{subsec:gradientL}

To shorten notation, let $M_\Theta = P(P\Theta P^\top)^{-1} P^\top$ and $\hat\Gamma_P = PP^\top \hat\Gamma PP^\top$. We obtain
\begin{align*}
    & \frac{\partial \tilde L(\Theta)}{\partial r_{mk}} = d_m \sum_{i \in C_k} (M_\Theta + \frac 12 \hat\Gamma_P)_{ii} + d_k \sum_{i \in C_m} (M_\Theta + \frac 12 \hat\Gamma_P)_{ii} - 2 (U^\top M_\Theta U)_{mk} - (U^\top \hat\Gamma_P U)_{mk}, \\
    & \frac{\partial \tilde L(\Theta)}{\partial r_{mm}} = d_m \sum_{i \in C_m} (M_\Theta + \frac 12 \hat\Gamma_P)_{ii} - (U^\top M_\Theta U)_{mm} - \frac 12 (U^\top \hat\Gamma_P U)_{mm}.
\end{align*}

\subsection{Block gradient of \texorpdfstring{$\pen(\Theta)$}{P(Theta)} }\label{subsec:gradientP}

We stress that

\begin{equation*}
    \pen(\Theta) = \sum_{k<\ell} W_{k\ell} \tilde D^2(\vect{r}_{k\cdot}, \vect{r}_{\ell\cdot})\,,
\end{equation*}
where $W_{k\ell} = \sum_{i \in C_k} \sum_{j \in C_\ell} w_{ij}$.
\noindent Hence, we only need to compute the derivative of $\tilde D^2$ for each coefficient $r_{k\ell}$. Let $k,\ell \in V$. For $k\neq m$ and $q\in V$, we have
\begin{equation*}
    \frac{\partial \tilde D^2(\vect{r}_{m\cdot }, \vect{r}_{k\cdot })}{\partial r_{mq}} =
    \begin{cases} 
        2d_q(r_{mq} -r_{kq}) & \text{if } k,m \neq q,\\ 
        2(d_m-1)(r_{mm} -r_{mk}) & \text{if } q = m,\\ 
        2(d_m-1)(r_{mq} -r_{mm}) +2(d_q-1)(r_{mq} -r_{qq}) & \text{if } k = q,
    \end{cases}
\end{equation*}
and for $k, \ell \neq m$,
\begin{equation*}
\frac{\partial \tilde D^2(\vect{r}_{k\cdot}, \vect{r}_{\ell \cdot })}{\partial r_{mq}} =
\begin{cases} 
2d_m(r_{km} -r_{k\ell}) & \text{if } k = q,\\ 
2d_m(r_{\ell m} -r_{k\ell}) & \text{if } \ell = q,\\ 
0 & \text{otherwise.}
\end{cases}
\end{equation*}

\noindent We can then deduce the derivatives using the expression 
\begin{align*}
    \frac{\partial \pen(\Theta)}{\partial r_{mq}} & = \sum_{k<\ell} W_{k\ell} \frac{\partial \tilde D^2(\vect{r}_{k\cdot }, \vect{r}_{\ell\cdot })}{\partial r_{mq}} \\ 
    & =\sum_{k\neq m} W_{km} \frac{\partial \tilde D^2(\vect{r}_{m\cdot }, \vect{r}_{k\cdot })}{\partial r_{mq}} + \sum_{\underset{k,\ell \neq m }{k<\ell}} W_{kl} \frac{\partial \tilde D^2(\vect{r}_{k\cdot }, \vect{r}_{\ell\cdot })}{\partial r_{mq}}.
\end{align*}

\subsection{Block Hessian of \texorpdfstring{$\tilde L(\Theta)$}{L(Theta)}}

We define the following matrices
\begin{equation*}
    (E_{k\ell})_{ij} = \begin{cases}
    1& \text{if } (i,j)\in \{(k,\ell), (\ell,k) \}, \\
    0 & \text{otherwise}
    \end{cases} \quad \text{and} \quad 
    (B_{k\ell})_{ij} = \begin{cases}
    -d_k & \text{if } i=j \in C_\ell,\\
    -d_\ell & \text{if } i=j \in C_k,\\
    0 & \text{otherwise}.
    \end{cases}
\end{equation*}

\noindent Using the gradient derived in Subsection \ref{subsec:gradientL}, we obtain
\begin{align*}
    & \frac{\partial^2 \tilde L(\Theta)}{\partial r_{mk} \partial r_{m\ell}} = d_m \sum_{i \in C_k} (\mathcal M_{m\ell})_{ii} + d_k \sum_{i \in C_m} (\mathcal M_{m\ell})_{ii} - 2(U^\top \mathcal M_{m\ell} U)_{mk}, \\
    & \frac{\partial^2 \tilde L(\Theta)}{\partial r_{mm} \partial r_{mk}} = d_m \sum_{i \in C_m} (\mathcal M_{mk})_{ii} - (U^\top \mathcal M_{mk} U)_{mm}, \\
    & \frac{\partial^2 \tilde L(\Theta)}{\partial r_{mm}^2} = d_m \sum_{i \in C_m} (\mathcal M_{mm})_{ii} - (U^\top \mathcal M_{mm} U)_{mm},
\end{align*}
with $\mathcal M_{k\ell} = - M_\Theta(B_{k\ell} + U E_{k\ell} U^\top) M_\Theta$.

\subsection{Block Hessian of \texorpdfstring{$\pen(\Theta)$}{P(Theta)}}

Using the gradient of the penalty computed in Subsection \ref{subsec:gradientP}, we obtain
\begin{align*}
    & \frac{\partial^2 \pen(\Theta)}{\partial r_{mk} \partial r_{m\ell}} = - 2 W_{k\ell} d_m, \\
    & \frac{\partial^2 \pen(\Theta)}{\partial r_{mk} \partial r_{mk}} = 2 d_k \sum_{q\neq m,k} W_{mq} + 2(d_m+ d_k -2) W_{km} + 2 d_m \sum_{q\neq m,k} W_{kq}, \\
    & \frac{\partial^2 \pen(\Theta)}{\partial r_{mm} \partial r_{mk}} = - 2(d_m-1) W_{km}, \\
    &  \frac{\partial^2 \pen(\Theta)}{\partial r_{mm}^2} = 2(d_m-1) \sum_{k\neq m} W_{km}.
\end{align*}

\section{Illustrative Hüsler-Reiss block model}

\subsection{Example of block structured HR precision matrix}
\label{example_block_structure_Theta}
 
The following example illustrates the block representation introduced in Subsection 2.4 and will serve as a running example in the numerical experiments of Section \ref{sect-sim}.

We consider the matrix $\Theta^\star \in \SPd$ given by
    \begin{equation}
    \label{eq:matrix_example}
    \footnotesize{\Theta^\star = \begin{pmatrix}
          2 & -0.5 & -0.5 & -0.5 & -0.25 & -0.25 & 0 & 0 & 0 \\
          -0.5 & 2 & -0.5 & -0.5 & -0.25 & -0.25 & 0 & 0 & 0 \\ 
          -0.5 & -0.5 & 2 & -0.5 & -0.25 & -0.25 & 0 & 0 & 0 \\
          -0.5 & -0.5 & -0.5 & 2 & -0.25 & -0.25 & 0 & 0 & 0 \\
          -0.25 & -0.25 & -0.25 & -0.25 & 2.75 & -0.75 & -0.25 & -0.25 & -0.25 \\
          -0.25 & -0.25 & -0.25 & -0.25 & -0.75 & 2.75 & -0.25 & -0.25 & -0.25 \\
          0 & 0 & 0 & 0 & -0.25 & -0.25 & 1.5 & -0.5 & -0.5 \\
          0 & 0 & 0 & 0 & -0.25 & -0.25 & -0.5 & 1.5 & -0.5 \\
          0 & 0 & 0 & 0 & -0.25 & -0.25 & -0.5 & -0.5 & 1.5 \\
        \end{pmatrix}\,.}
  \end{equation}
This matrix satisfies the block structure \eqref{eq:block_structure_Theta} with $K=3$, and a partition $\mathcal{C}$ formed by the clusters $C_1 = \{1,2,3,4\}$, $C_2 = \{5,6\}$, and $C_3 = \{7,8,9\}$, with size $d_1 = 4$, $d_2 =2$, and $d_3=3$ respectively. We also have $a_1=2$, $a_2 = 2.25$, and $a_3 = 1.5$ and the matrices $R$ and $U$ are given by
  \begin{equation*}
  \footnotesize{
    R = \begin{pmatrix}
          -0.5 & -0.25 & 0 \\
          -0.25 & -0.75 & -0.25 \\
          0 & -0.25 & -0.5
        \end{pmatrix}
        \quad \text{and} \quad
        U^\top = \begin{pmatrix}
1 & 1 & 1 & 1 & 0 & 0 & 0 & 0 & 0 \\
0 & 0 & 0 & 0 & 1 & 1 & 0 & 0 & 0 \\
0 & 0 & 0 & 0 & 0 & 0 & 1 & 1 & 1
\end{pmatrix}\,.}
    \end{equation*}

An important consequence of the reduced representation concerns graphical interpretation. The nullity of a coefficient $r_{ij}$ in the reduced matrix $R$ (as for instance the bottom left one in \eqref{eq:matrix_example}) indicates conditional extremal independence between the two clusters $i$ and $j$ conditionally on the others. This is a consequence of the global Markov property of extremal graphical models \citep{engelkeGraphicalModelsExtremes2020}. Hence, a graph with $K$ nodes induced by the coefficients of $R$ can be used with vertices corresponding to a cluster $C_k$ and the edge the conditional block dependence. Figure \ref{fig:graph_block} illustrates the interpretation of the block structure in terms of graph for the HR precision matrix \eqref{eq:matrix_example}.

\begin{figure}[!h]
  \centering
  \input{figures/block-graph.tex}
  \caption{Graphical interpretation of the HR precision matrix \eqref{eq:matrix_example}. Left: extremal graphical model associated with this matrix. Right: reduced graphical representation obtained from the block structure. The dashed and dotted edges are weighted by $r_{12}$ and $r_{23}$, respectively. The missing edge between the first and third clusters corresponds to the null coefficient $r_{13}=0$.}
  \label{fig:graph_block}
\end{figure}

\subsection{Characterisation of HR precision matrix}
\label{appendix:characterisation.hr}

The following proposition provides simple sufficient conditions ensuring that the representation \eqref{eq:block_structure_Theta} belongs to $\SPd$.

\begin{proposition}
  \label{prop:valid-r}
  Let $\Theta$ be a matrix satisfying the block structure \eqref{eq:block_structure_Theta}
  and let $R$ be the associated reduced matrix. We define $B = \text{diag}((d_k)_{k=1,\dots, K})$ and $T=\text{diag}((\sum_{\ell=1}^K d_\ell r_{k\ell})_{k=1, \dots, K}) $. If the matrix $\tilde{R} = RB - T \in \mathcal{SP}^1_K$ (the $K$-dimensional analogue of $\SPd$) and $a_k - r_{kk} > 0$ for all $k \in \{1,\dots, K\}$, then $\Theta \in \SPd$.
\end{proposition}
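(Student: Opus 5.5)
The plan is to split $\R^d$ into two $\Theta$-invariant subspaces and read off the spectrum of $\Theta$ on each. Write $\Theta = D + URU^\top$ with $D = \operatorname{diag}\big((a_k - r_{kk}) I_{d_k}\big)_{k=1,\dots,K}$.

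First, the easy properties. Symmetry of $\Theta$ is immediate from the symmetry of $R$. For the null row sums, note that $U^\top \vect{1}_d = B\vect{1}_K$. The definition $a_k = r_{kk} - \sum_\ell d_\ell r_{k\ell}$ gives $a_k - r_{kk} = -\sum_\ell d_\ell r_{k\ell} = -T_{kk}$. Hence $\Theta \vect{1}_d = U\big(-T\vect{1}_K + RB\vect{1}_K\big) = U\tilde R \vect{1}_K = \vect{0}_d$, using $\tilde R \vect{1}_K = \vect{0}_K$ from $\tilde R \in \mathcal{SP}^1_K$. It therefore remains to show positive semi-definiteness and $\operatorname{rank}(\Theta) = d-1$. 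Since $\Theta$ is symmetric, it suffices to exhibit $d$ linearly independent eigenvectors whose eigenvalues are non-negative and of which exactly one equals zero.

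Next, the complement of $\operatorname{range}(U)$. Let $W = \{\vect{x} : U^\top \vect{x} = \vect{0}\}$, the vectors summing to zero within each cluster; it has dimension $d - K$. For $\vect{x} \in W$ we get $\Theta \vect{x} = D\vect{x}$. Because $D$ is constant on each cluster, $D$ preserves $W$, and on the vectors of $W$ supported in $C_k$ it acts as multiplication by $a_k - r_{kk}$. This yields $d_k - 1$ independent eigenvectors with eigenvalue $a_k - r_{kk} > 0$ for each $k$, so $d-K$ eigenvectors in total, all with strictly positive eigenvalues by assumption.

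Then, $\operatorname{range}(U)$. The key identity is $D U = U\operatorname{diag}(a_k - r_{kk}) = -UT$ together with $U^\top U = B$, which give
\begin{equation*}
\Theta U = -UT + URB = U\tilde R .
\end{equation*}
Since $\tilde R \in \mathcal{SP}^1_K$ is symmetric positive semi-definite, it has an eigenbasis $\vect{y}_1,\dots,\vect{y}_K$ with eigenvalues $\mu_i \ge 0$, exactly one of which is zero, with eigenvector $\vect{1}_K$. The identity shows $\Theta U\vect{y}_i = \mu_i U\vect{y}_i$. As $U$ has full column rank, the vectors $U\vect{y}_i$ are $K$ independent eigenvectors spanning $\operatorname{range}(U)$, and the zero eigenvalue corresponds to $U\vect{1}_K = \vect{1}_d$.

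Finally, combine the two parts. We have $\R^d = \operatorname{range}(U) \oplus W$, since these subspaces are orthogonal and their dimensions add to $d$. So we have $d$ independent eigenvectors, all with non-negative eigenvalues and exactly one zero eigenvalue. Hence the symmetric matrix $\Theta$ is positive semi-definite with rank $d-1$, and $\Theta \in \SPd$.

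The only delicate point I anticipate is the second part: checking that $D$ really preserves $W$ and that the identity $\Theta U = U\tilde R$ holds with exactly the diagonal matrix $T$ as defined. This rests on the equality $a_k - r_{kk} = -T_{kk}$ and should be verified carefully. Note also that no symmetry of $RB$ is needed beyond what is already encoded in the assumption $\tilde R \in \mathcal{SP}^1_K$.
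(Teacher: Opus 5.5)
Your proof is correct, and it takes a different route from the paper's. The paper works with the characteristic polynomial. By subtracting rows and adding columns inside each block, it factors out $(a_k - r_{kk} - X)^{d_k-1}$ and reduces what remains to $\det(\tilde R - XI_K)$. This gives $P_\Theta(X) = \prod_{k}(a_k - r_{kk} - X)^{d_k-1}\det(\tilde R - XI_K)$, from which the paper reads off the signs of the roots.

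You instead split $\R^d = \ker U^\top \oplus \mathrm{range}(U)$ into $\Theta$-invariant subspaces and use the identity $\Theta U = U\tilde R$, which is the structural fact behind the paper's block reduction. Your approach buys transparency. It replaces the row/column bookkeeping, which the paper only sketches for blocks $2,\dots,K$, by two short computations. It also gives explicit eigenvectors: within-cluster contrasts for the eigenvalues $a_k - r_{kk}$, and the vectors $U\vect{y}_i$ for the eigenvalues of $\tilde R$, with $U\vect{1}_K = \vect{1}_d$ spanning the kernel. The paper's identity, in turn, yields the full characteristic polynomial with multiplicities in one formula. It holds as a polynomial identity without using the symmetry of $\tilde R$.

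On your closing remark: you are right that you only use what the hypothesis encodes, but that hypothesis is restrictive. Since $\tilde R_{k\ell} = d_\ell r_{k\ell}$ for $k\neq\ell$, symmetry of $\tilde R$ forces $r_{k\ell} = 0$ whenever $d_k \neq d_\ell$. Your viewpoint shows where the natural symmetric object lives. In the orthonormal basis $UB^{-1/2}$ of $\mathrm{range}(U)$, $\Theta$ acts as $B^{1/2}\tilde R B^{-1/2} = B^{1/2}RB^{1/2} - T$. So the same conclusion holds if, together with $a_k - r_{kk} > 0$, one only asks this symmetric matrix to be positive semi-definite of rank $K-1$; its kernel is then spanned by $B^{1/2}\vect{1}_K$.
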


\begin{proof}
Since $\Theta$ is assumed to satisfy the block structure \eqref{eq:block_structure_Theta}, it is sufficient to prove that it is positive semi-definite with $\text{rank}(\Theta) = d-1$ to guarantee that it belongs to $\SPd$. This is equivalent to showing that the matrix has a simple zero eigenvalue, while all remaining eigenvalues are strictly positive. Given this block structure, the characteristic polynomial $P_\Theta(X) = \det(\Theta - X I_d)$ of $\Theta$ can be written as
\begin{equation*}
P_\Theta(X)
= \begin{vmatrix}
    A_1 - XI_{d_1} & r_{12} \vect{1}_{d_1} \vect{1}^\top_{d_2}& \cdots & \cdots & r_{1K} \vect{1}_{d_1} \vect{1}^\top_{d_K}\\
    r_{21} \vect{1}_{d_2} \vect{1}^\top_{d_1} & A_2 - XI_{d_2} & \ddots &  &  \vdots \\
    \vdots & \ddots & \ddots & \ddots &\vdots \\
    \vdots & & \ddots & A_{K-1}- XI_{d_{K-1}}&  r_{K(K-1)} \vect{1}_{d_{K-1}} \vect{1}^\top_{d_K} \\
    r_{K1} \vect{1}_{d_K} \vect{1}^\top_{d_1} & \cdots & \cdots &r_{K(K-1)} \vect{1}_{d_K} \vect{1}^\top_{d_{K-1}}  & A_K - XI_{d_K}
\end{vmatrix}\,,
\end{equation*}
with
\begin{equation*}
A_k =
\begin{pmatrix}
a_k & r_{kk} & \cdots& r_{kk} \\
r_{kk} & a_k & \ddots & \vdots \\
\vdots & \ddots& \ddots & r_{kk} \\
r_{kk} & \cdots & r_{kk} & a_{k}
\end{pmatrix} \in \mathcal{M}_{d_k}(\R)\,,
\quad k = 1, \ldots, K\,.
\end{equation*}
We focus on the first $d_1$ rows and subtract the $d_1$-th row from the preceding ones allowing to factorise the determinant by $a_1 - r_{11} - X$. The $d_1$ first rows become
\begin{equation*}
(a_1 - r_{11} - X)^{d_1-1}
\begin{array}{c}
\left|
\begin{array}{cccccccccccc}
1 & 0 & \cdots & 0 & -1 &
0 & \cdots & 0 &
\cdots & 0 & \cdots & 0 \\

0 & 1 & \ddots & \vdots & \vdots &
0 & \cdots & 0 &
\cdots & 0 & \cdots & 0 \\

\vdots & \ddots & \ddots & 0 & -1 &
0 & \cdots & 0 &
\cdots & 0 & \cdots & 0 \\

0 & \cdots & 0 & 1 & -1 &
0 & \cdots & 0 &
\cdots & 0 & \cdots & 0 \\

r_{11} & \cdots & \cdots & r_{11} & a_1-X &
r_{12} & \cdots & r_{12} &
\cdots &
r_{1K} & \cdots & r_{1K}
\end{array}
\right|
\\[-0.3em]
\begin{array}{cccc}
\underbrace{\hspace{4.8cm}}_{d_1}
&
\underbrace{\hspace{2.5cm}}_{d_2}
&
\hspace{0.4cm}
&
\underbrace{\hspace{2.5cm}}_{d_K}
\end{array}
\end{array}.
\end{equation*}
By summing the first $d_1-1$ columns with the $d_1$-th column, we then obtain
\begin{equation*}
(a_1 - r_{11} - X)^{d_1-1}
\begin{array}{c}
\left|
\begin{array}{cccccccccccc}
1 & 0 & \cdots & 0 & 0 &
0 & \cdots & 0 &
\cdots & 0 & \cdots & 0 \\

0 & 1 & \ddots & \vdots & \vdots &
0 & \cdots & 0 &
\cdots & 0 & \cdots & 0 \\

\vdots & \ddots & \ddots & 0 & 0 &
0 & \cdots & 0 &
\cdots & 0 & \cdots & 0 \\

0 & \cdots & 0 & 1 & 0 &
0 & \cdots & 0 &
\cdots & 0 & \cdots & 0 \\

r_{11} & \cdots & \cdots & r_{11} & a_1+ (d_1-1)r_{11}-X &
r_{12} & \cdots & r_{12} &
\cdots &
r_{1K} & \cdots & r_{1K}
\end{array}
\right|
\\[-0.3em]
\begin{array}{cccc}
\underbrace{\hspace{7.1cm}}_{d_1}
&
\underbrace{\hspace{2.5cm}}_{d_2}
&
\hspace{0.4cm}
&
\underbrace{\hspace{2.5cm}}_{d_K}
\end{array}
\end{array},
\end{equation*}
which leads, by expending the determinant along the first $d_1-1$ rows, to 
\begin{equation*}
P_\Theta(X)
= (a_1 - r_{11} - X)^{d_1-1}\begin{vmatrix}
    a_1+ (d_1-1)r_{11}-X  & r_{12}  \vect{1}^\top_{d_2}& \cdots &  r_{1K} \vect{1}^\top_{d_K}\\
    d_1 r_{21} \vect{1}_{d_2} & A_2 - XI_{d_2} & \ddots &  \vdots \\
    \vdots & \ddots  & \ddots &\vdots \\
    d_1 r_{K1} \vect{1}_{d_K} & \cdots & \cdots & A_K - XI_{d_K}
\end{vmatrix}\,.
\end{equation*}
After repeating this operation on the other blocks of the matrix, we obtain
\begin{equation*}
P_\Theta(X) = \left(\prod_{k = 1} ^K (a_k - r_{kk} - X)^{d_k-1}\right) \det(\tilde R - X I_K)\,.
\end{equation*}
Since $\tilde{R}$ is symmetric and satisfies $\tilde{R}\mathbf{1}_K = \mathbf{0}$ by construction, having $\tilde{R} \in \mathcal{SP}^1_K$ and $a_k > r_{kk}$ for all $k \in V$ is sufficient to guarantee that all eigenvalues of $\Theta$ are strictly positive except for a single simple zero eigenvalue.

\end{proof}

Proposition \ref{prop:valid-r} provides a simple validity check for the estimators introduced in Section \ref{method} and is also used to generate valid block-structured HR precision matrices when the clusters are of equal size, see next section.

\paragraph{Generating a block-structured matrix \texorpdfstring{$\Theta$}{Theta} in \texorpdfstring{$\SPd$}{SPd1}}
\label{appendix:note-valid}

Proposition \ref{prop:valid-r} provides a practical way to generate valid block-structured HR precision matrices with equal-sized clusters $p$ by ensuring the construction of an admissible reduced matrix $R$. This stems from two main observations made in this balanced-cluster scenario:
\begin{enumerate}[label=(\alph*)]
    \item Since all the cluster sizes $d_\ell=p$, according to the definition of $a_k$ in Equation \eqref{eq:block_structure_Theta}, the condition $a_k-r_{kk} > 0$ simplifies to $\sum_{\ell=1}^K r_{k\ell}<0$.
    \item Similarly, the matrix $\tilde R$ takes the simpler form
    $$
    \tilde R =  p (R - \tilde T),
    $$
   where $\tilde T := \text{diag}((\sum_{\ell=1}^K r_{k\ell})_{k=1, \dots, K})$, and consequently satisfies $\tilde R \vect{1}_K = 0$. Hence, $\tilde R \in \mathcal{SP}^1_K$ and can be easily constructed as implemented for instance in the R package \cite{graphicalExtremes}.
\end{enumerate}
Thus, a block-structured precision matrix $\Theta$ with balanced clusters of arbitrary size can be generated by constructing the corresponding reduced matrix ${R}$ as follows: 

\paragraph{Step 1:} Get a matrix $\tilde R$ in $\mathcal{SP}_K^1$.
\paragraph{Step 2:} For the off-diagonal coefficients $r_{k\ell}$ of $R$, set $r_{k\ell} = \tilde r_{k\ell}$
\paragraph{Step 3:} The diagonal entries $\tilde{r}_{kk}$ are then equal to $-\sum_{\ell\neq k} r_{k\ell}$. To fulfil the condition $\sum_{\ell=1}^K r_{k\ell} < 0$, each diagonal element $r_{kk}$ is simply chosen so that $r_{kk} < \tilde r_{kk}$

\section{Proofs}
\label{appendix:proof}

This section provides the proofs of the main results presented in the paper.

\subsection{Proof of Proposition \ref{path-sol}}

Assume that there exists $\tilde \lambda>0$ such that $\pen(\hat \Theta(\tilde \lambda)) = 0$. For any $\lambda \ge \tilde \lambda$, it holds
\begin{equation*}
0\leq \pen(\hat\Theta(\lambda)) \le \pen(\hat\Theta(\tilde \lambda)) = 0
\end{equation*}
and consequently, by strict convexity of the function $L$ defined in \eqref{nllh}, $\hat \Theta(\lambda) =\hat \Theta(\tilde \lambda)$. This motivates considering the mapping $\lambda \mapsto \hat \Theta(\lambda)$ only on the interval $[0, \tilde \lambda]$. Before proving the continuity of this solution path, it is helpful to establish the following lemmas.

\begin{lemma}
    \label{coercivity}
    $L$ is coercive, \textit{i.e.} $L(\Theta) \to +\infty$ as $||\Theta||_F \rightarrow +\infty$.
\end{lemma}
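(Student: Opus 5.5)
The approach is to reduce $L$ to a sum of scalar functions of the non-zero eigenvalues of $\Theta$, each of which is bounded below and tends to infinity with its argument. Throughout, $\Theta$ ranges over $\SPd$ (the domain of \eqref{prob-nllh}), and I assume, as is implicit in the definition of $L$ as a surrogate likelihood, that $\hat\Gamma$ is a valid variogram, i.e.\ strictly conditionally negative definite. For the empirical estimator \eqref{vario} this holds almost surely when $k$ exceeds $d$, being an average of sample variogram matrices.

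\textbf{Step 1: rewrite the trace term.} Since $\Theta\vect{1}_d=\vect{0}_d$ and $\Theta$ is symmetric, we have $\Theta=\Pi\Theta\Pi$. Hence
\[
-\tfrac12\text{tr}(\Theta\hat\Gamma)=\text{tr}(\Theta\hat\Sigma),\qquad \hat\Sigma:=\Pi(-\hat\Gamma/2)\Pi .
\]
By strict conditional negative definiteness, $\hat\Sigma$ is positive semi-definite with kernel $\langle\vect{1}_d\rangle$. Let $s>0$ be its smallest non-zero eigenvalue; then $\vect v^\top\hat\Sigma\vect v\ge s\|\vect v\|_2^2$ for every $\vect v\perp\vect{1}_d$.

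\textbf{Step 2: lower bound via the spectrum of $\Theta$.} Write $\Theta=\sum_{i=1}^{d-1}\lambda_i\vect v_i\vect v_i^\top$, where $\lambda_i>0$ and the $\vect v_i$ are orthonormal and orthogonal to $\vect{1}_d$. This is possible because $\text{rank}(\Theta)=d-1$ and $\vect{1}_d$ spans the kernel. Then
\[
\text{tr}(\Theta\hat\Sigma)=\sum_i\lambda_i\,\vect v_i^\top\hat\Sigma\vect v_i\ge s\sum_i\lambda_i ,
\]
and since $\log|\Theta|_+=\sum_i\log\lambda_i$,
\[
L(\Theta)\ \ge\ \sum_{i=1}^{d-1} g(\lambda_i),\qquad g(x):=s x-\log x .
\]

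\textbf{Step 3: conclude.} The function $g$ is bounded below on $(0,\infty)$ by $g(1/s)=1+\log s$, and $g(x)\to+\infty$ as $x\to\infty$. Moreover $\|\Theta\|_F^2=\sum_i\lambda_i^2$, so $\|\Theta\|_F\to\infty$ forces $\max_i\lambda_i\to\infty$. Therefore
\[
L(\Theta)\ge g(\max_i\lambda_i)+(d-2)(1+\log s)\to+\infty .
\]

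The computations are routine. The only real point needing care is Step 1: the uniform constant $s>0$ relies on $\hat\Gamma$ being strictly conditionally negative definite, so that the projected matrix $\Pi(-\hat\Gamma/2)\Pi$ is positive definite on $\vect{1}_d^\perp$. I would state this explicitly as a standing property of the variogram estimator. Without it the linear term could vanish along some direction in $\vect{1}_d^\perp$, and coercivity would fail.
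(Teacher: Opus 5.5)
Your proof is correct and follows the same route as the paper. Both spectrally decompose $\Theta$, write $L(\Theta)$ as a sum over the non-zero eigenvalues of terms $c_k\lambda_k-\log\lambda_k$, and use the strict conditional negative definiteness of the variogram to get $c_k>0$. Your uniform constant $s$, the smallest non-zero eigenvalue of $\Pi(-\hat\Gamma/2)\Pi$, tightens the paper's argument in two ways: it covers the fact that the coefficients $-\tfrac12(Q^\top\Gamma Q)_{kk}$ vary with the eigenvectors of $\Theta$, which the paper treats as fixed, and it replaces the paper's hand-waved extension to several diverging eigenvalues.
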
 

\begin{proof}[Proof of Lemma \ref{coercivity}]
    Let $\Theta \in \SPd$. By definition, $\Theta$ has $d$ eigenvalues $0=\lambda_1 < \lambda_2\leq \dots\leq \lambda_d$. Its symmetry yields the following spectral decomposition
     \begin{equation}
     \label{eq:decomposition}
         \Theta = Q \Lambda Q^\top\,,
    \end{equation}
     with $\Lambda= \text{diag}((\lambda_k)_{k=1,\dots,d})$, and $Q$ is an orthogonal matrix whose $k$-th column corresponds to an eigenvector of $\Theta$ associated with $\lambda_k$. In particular, since $\Theta \mathbf{1}_d = \mathbf{0}_d$, its first column $\vect{Q}_{\cdot 1}$ is proportional to the vector $\mathbf{1}_d$. It follows that
       \begin{equation*}
        ||\Theta||_F^2 = \sum_{i=2}^d \lambda_i^2\,.
    \end{equation*}
    Hence, the maximum eigenvalue $\lambda_d$ necessarily diverges to infinity as $\|\Theta\|_F \to \infty$. Assume for now that it is the unique eigenvalue doing so. Using \eqref{eq:decomposition}, the function $L$ can be written as
    \begin{align}
    L(\Theta) & = -\sum_{k=2}^{d} \log(\lambda_k)- \frac 12 \text{tr}(\Gamma Q \Lambda Q^\top)\,  \nonumber \\
    & = \left(-\log(\lambda_d) - \frac 12 \left(Q^\top \Gamma Q\right)_{dd} \lambda_d \right) +\sum_{k=2}^{d-1} \left(-\log(\lambda_k)- \frac 12 (Q^\top \Gamma Q)_{kk} \lambda_k  \right).\label{eq:Ldecompo}
\end{align}
 By definition, the variogram matrix $\Gamma$ satisfies $a^\top \Gamma a <0$ for any non-zero vector $a$ such that $a_1 + \dots +a_d = 0$. The orthogonality condition $Q_{\cdot d}^\top \mathbf{1}_d = 0$ implies $(Q^\top \Gamma Q)_{dd} < 0$, and consequently the first term in Equation \eqref{eq:Ldecompo} diverges to $+\infty$ as $||\Theta||_F \to \infty$. Since the second term in Equation \eqref{eq:Ldecompo} is bounded from below as $\|\Theta\|_F \to  \infty$, it follows that $L(\Theta) \to +\infty$ as $\|\Theta\|_F \to  \infty$, which establishes the coercivity of $L$. By similar reasoning, this result extends to the case where several eigenvalues diverge to infinity as $\|\Theta\|_F \to \infty$.
\end{proof}

\begin{lemma}
    \label{bound-min}
Let $\{\hat \Theta(\lambda)\}_{\lambda \in [0, \tilde \lambda]}$ be the set of solutions to the optimisation problem defined in \eqref{prob-nllh} for $\lambda \in [0, \tilde \lambda]$. The set $E = \{||\hat \Theta(\lambda)||_F, \,\lambda \in [0, \tilde \lambda]\}$ is bounded.
\end{lemma}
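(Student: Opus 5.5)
The plan is to compare each penalised minimiser with one fixed reference point, namely the penalised solution $\hat\Theta(\tilde\lambda)$, and then use the coercivity of $L$ from Lemma \ref{coercivity}. Set $\Theta_0 := \hat\Theta(\tilde\lambda)$. By hypothesis $\pen(\Theta_0) = 0$, so for every $\lambda \in [0,\tilde\lambda]$ we have $L_\pen(\Theta_0,\lambda) = L(\Theta_0)$. This value is a finite constant independent of $\lambda$, because $\Theta_0 \in \SPd$ has a strictly positive pseudo-determinant.

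\textbf{Step 1 (uniform sublevel bound).} Since $\hat\Theta(\lambda)$ minimises $L_\pen(\cdot,\lambda)$ over $\SPd$ and $\lambda\pen \ge 0$ (the weights are non-negative and $D^2\ge 0$), we get
\begin{equation*}
L(\hat\Theta(\lambda)) \le L(\hat\Theta(\lambda)) + \lambda\pen(\hat\Theta(\lambda)) = L_\pen(\hat\Theta(\lambda),\lambda) \le L_\pen(\Theta_0,\lambda) = L(\Theta_0).
\end{equation*}
Hence every $\hat\Theta(\lambda)$, for $\lambda\in[0,\tilde\lambda]$, lies in the sublevel set $\mathcal{S} = \{\Theta \in \SPd : L(\Theta) \le L(\Theta_0)\}$.

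\textbf{Step 2 (boundedness of the sublevel set).} We argue by contradiction. Suppose $E$ is unbounded. Then there is a sequence $\lambda_m \in [0,\tilde\lambda]$ with $\|\hat\Theta(\lambda_m)\|_F \to \infty$. Lemma \ref{coercivity} then gives $L(\hat\Theta(\lambda_m)) \to +\infty$, which contradicts the uniform bound $L(\hat\Theta(\lambda_m)) \le L(\Theta_0)$ from Step 1. Therefore $E$ is bounded.

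\textbf{Main obstacle.} The only delicate point is that Lemma \ref{coercivity} is stated for $L$ with the true variogram $\Gamma$, whereas $L$ here is built with $\hat\Gamma$. The proof of coercivity uses only the fact that the matrix entering the trace is strictly conditionally negative definite. So I would either assume this property for $\hat\Gamma$ (it holds for the empirical variogram, which is a valid variogram almost surely), or note explicitly that Lemma \ref{coercivity} is applied with $\hat\Gamma$ in place of $\Gamma$. I would also check that $\pen \ge 0$, which is immediate, since this is what allows the penalty term to be dropped in Step 1.
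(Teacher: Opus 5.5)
Your proposal is correct and follows the paper's proof essentially verbatim. Both arguments compare $\hat\Theta(\lambda)$ with the fully fused solution $\hat\Theta(\tilde\lambda)$, whose penalty vanishes, to obtain $L(\hat\Theta(\lambda)) \le L(\hat\Theta(\tilde\lambda))$ uniformly in $\lambda \in [0,\tilde\lambda]$, and then conclude by contradiction with the coercivity of $L$ from Lemma \ref{coercivity}. Your caveat is a fair point that the paper leaves implicit: coercivity has to be applied with $\hat\Gamma$ in place of $\Gamma^\star$, so $\hat\Gamma$ must be strictly conditionally negative definite.
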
 

\begin{proof}[Proof of Lemma \ref{bound-min}] By definition, we have for all $\lambda \le \tilde \lambda$: 
\begin{equation*}
    L(\hat \Theta(\lambda)) \leq  L(\hat \Theta(\lambda)) + \lambda \mathcal P(\hat \Theta(\lambda)) \le L(\hat \Theta(\tilde \lambda)) + \lambda \mathcal P(\hat \Theta(\tilde \lambda)) = L(\hat \Theta(\tilde \lambda)).
\end{equation*}
Hence, the set $\{L(\hat{\Theta}(\lambda))\}_{\lambda \in [0, \tilde{\lambda}]}$ is bounded. Consequently, $E$ must also be bounded, as an unbounded $E$ would contradict the coercivity of $L$ established in Lemma \ref{coercivity}.
\end{proof}

\begin{proof}[Proof of Proposition \ref{path-sol}] Let $(\lambda_n)_{n \in \N}$ be a sequence in $[0, \tilde \lambda]$ converging to $\lambda'$, and consider the associated sequence $(\hat\Theta_n)_{n \in \N}$ in $\SPd$ defined as
\begin{equation*}
    \hat{\Theta}_n = \hat \Theta(\lambda_n).
\end{equation*}
We aim to show that $(\hat \Theta_n)_{n \in \mathbb N}$ $(\Theta_n)_{n \in \mathbb N}$ converges to $\Theta':= \hat \Theta(\lambda')$

By Lemma \ref{bound-min}, this sequence is bounded, and thus, there exists a convergent subsequence $(\hat\Theta_{\phi(n)})_{n \in \mathbb N}$ whose limit is denoted by $\Theta''$. Since $\hat \Theta_{\phi(n)}$ minimises the objective function at $\lambda_{\phi(n)}$, it holds for all $\Theta \in \mathcal{S}_d^1$ that
\begin{equation*}
    L(\hat\Theta_{\phi(n)}) + \lambda_{\phi(n)} \mathcal P(\hat\Theta_{\phi(n)}) \le L(\Theta) + \lambda_{\phi(n)} \mathcal P(\Theta).
\end{equation*}
Hence, taking the limit as $n \rightarrow \infty$ yields
\begin{equation*}
L(\Theta'') + \lambda' \mathcal P(\Theta'') \le L(\Theta) + \lambda' \mathcal P(\Theta).
\end{equation*}
This implies that $\Theta''$ is a solution of the minimisation problem \eqref{prob-nllh}, and by uniqueness $\Theta'' = \Theta'$. 

Since the sequence $(\Theta_n)_{n \in \mathbb N}$ is bounded and admits $\hat \Theta(\lambda')$ as a unique subsequential limit, it converges to $\hat \Theta(\lambda')$. Hence, the mapping $\lambda \mapsto \hat \Theta(\lambda)$ is continuous.
\end{proof}

\subsection{Proof of Proposition \ref{prop:merging_step}}

Before proving that $\Theta_{\text{new}} \in \SPd$, we first establish the following lemma bounding the distance between $\Theta$ and $\Theta_{\text{new}}$.

\begin{lemma}
    \label{lemma:distnewTheta} Let $\Theta \in \SPd$ with associated clusters $C_1, \dots, C_{K'}$ and $\Theta_{new}$ be the matrix obtained after the merging step described in Equation \eqref{r-approx}. Then if $D(C_k, C_\ell) \le \varepsilon_f$, there exists $c>0$ such that
    \begin{equation*}
            ||\Theta - \Theta_{new} ||_F \le c \varepsilon_f,
    \end{equation*}
\end{lemma}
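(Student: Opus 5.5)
The plan is to compute $\Theta - \Theta_{new}$ entrywise from the block representation \eqref{eq:block_structure_Theta} and the update rule \eqref{r-approx}. Every entry of the difference should turn out to be a difference of coefficients of $R$ that also appears, with weight at least one, in $\tilde D(\vect{r}_{k\cdot},\vect{r}_{\ell\cdot})^2$. Since $\Theta$ and $\Theta_{new}$ share the same partition outside $C_c = C_k\cup C_\ell$, only rows and columns indexed by $C_k\cup C_\ell$ can differ. The diagonal entries are then handled through the zero-row-sum constraint.

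\textbf{Off-diagonal entries with one index outside $C_c$.} Take $i\in C_k$ and $j\in C_q$ with $q\neq k,\ell$. Then $\Theta_{ij} = r_{kq}$ and $(\Theta_{new})_{ij} = (d_k r_{kq}+d_\ell r_{\ell q})/(d_k+d_\ell)$. Their difference is
\[
\frac{d_\ell}{d_k+d_\ell}\,(r_{kq}-r_{\ell q}),
\]
which in absolute value is at most $|r_{kq}-r_{\ell q}|$. Since $d_q\ge 1$, this is at most $\tilde D(\vect{r}_{k\cdot},\vect{r}_{\ell\cdot}) = D(C_k,C_\ell)\le\varepsilon_f$. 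The case $i\in C_\ell$ is symmetric. There are at most $d^2$ such entries, so their total contribution to $\|\Theta-\Theta_{new}\|_F^2$ is bounded by $d^2\varepsilon_f^2$.

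\textbf{Off-diagonal entries inside $C_c$.} For $i\neq j$ both in $C_k$, the difference is $r_{kk}-r_{k\ell}$. If $d_k\ge2$, the term $(d_k-1)(r_{kk}-r_{\ell k})^2$ in $\tilde D^2$ gives $|r_{kk}-r_{k\ell}|\le\varepsilon_f$. If $d_k=1$ there is no such entry, so nothing needs to be controlled. The same argument applies inside $C_\ell$. For $i\in C_k$ and $j\in C_\ell$, both matrices have the entry $r_{k\ell}$, so the difference vanishes.

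\textbf{Diagonal entries.} Both matrices satisfy $\Theta\vect 1_d=\vect 0_d$, so each diagonal entry equals minus the sum of the off-diagonal entries in its row, for $\Theta$ and $\Theta_{new}$ alike. The difference of diagonal entries is therefore bounded by the sum of at most $d-1$ off-diagonal differences, each of size at most $\varepsilon_f$, hence by $(d-1)\varepsilon_f$. This requires $\Theta_{new}$ to be given by \eqref{eq:block_structure_Theta} with its own $a$-coefficients, so that the null row sums hold by construction.

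Summing all contributions gives $\|\Theta-\Theta_{new}\|_F^2\le (d^2 + d(d-1)^2)\,\varepsilon_f^2$, so one can take $c=\sqrt{d^2+d(d-1)^2}$, a constant depending only on $d$. The main obstacle is bookkeeping rather than a genuine difficulty: one must check carefully that every entrywise difference is dominated by a term appearing in $\tilde D$ with weight at least one, including the degenerate singleton cases, and that the diagonal formula is the one consistent with the null row sums. The definition of $\tilde D$ as written has a typo ($d_m$ should read $d_q$), and the argument should use the form derived from $D(\vect\Theta_{i\cdot},\vect\Theta_{j\cdot})$.
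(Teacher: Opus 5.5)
Your proof is correct and follows essentially the same route as the paper. Both arguments note that only the rows and columns indexed by $C_k\cup C_\ell$ change, and both split those entries into three types: entries linking to other clusters (weighted averages, controlled by $|r_{kq}-r_{\ell q}|$), entries inside $C_k$ (controlled by the $(d_k-1)(r_{kk}-r_{k\ell})^2$ term of $\tilde D^2$), and entries between $C_k$ and $C_\ell$ (unchanged); both then recover the diagonal from the null row sums. The paper organises the same bounds row by row, summing squared differences and treating the diagonal through a Lipschitz constant, instead of entry by entry, which only changes the value of $c$.
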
 
\begin{proof}[Proof of Lemma \ref{lemma:distnewTheta}]
   Without loss of generality, suppose that the clusters $C_k$ and $C_\ell$ are the only pair of clusters satisfying $D(C_k, C_\ell) \le \varepsilon_f$. Thus, the matrices $\Theta$ and $\Theta_{\mathrm{new}}$ differ only on the columns (and, by symmetry, the rows) corresponding to indices in $C_k \cup C_\ell$.
   Therefore,
    \begin{align*}
         ||\Theta - \Theta_{new} ||_F^2 =& \sum_{i\in C_k \cup C_l} \sum_{j=1}^d (\Theta_{ij} - (\Theta_{new})_{ij})^2+ \sum_{j\in C_k \cup C_l} \sum_{i=1}^d (\Theta_{ij} - (\Theta_{new})_{ij})^2\\ &- \sum_{i,j \in C_k\cup C_l}(\Theta_{ij} - (\Theta_{new})_{ij})^2 \\
         \le & 2 \sum_{i\in C_k \cup C_l} ||\vect{\Theta}_{i\cdot} - (\vect{\Theta_{new}})_{i\cdot} ||^2
    \end{align*}
    where $||\cdot||$ denotes the euclidean norm. Consequently, showing that $||\vect{\Theta}_{i\cdot} - (\vect{\Theta_{new}})_{i\cdot} ||^2 = O(\varepsilon_f^2)$ for each $i\in C_k \cup C_l$  completes the proof.
    
Suppose that $i \in C_k$. We have
    \begin{equation*}
        ||\vect{\Theta}_{i\cdot} - (\vect{\Theta_{new}})_{i\cdot} ||^2 = ||\vect{\Theta}_{i, -i} - (\vect{\Theta_{new}})_{i, -i} ||^2 + (a_i - (a_{new})_i)^2 \, ,
    \end{equation*}
    where $\vect{\Theta}_{i, -i}$ denotes the $i$-th row of $\Theta$ without the $i$-th coefficient. As the map $\vect{\Theta}_{i, -i} \longmapsto a_i=\sum_{j\neq i}\Theta_{i,j}$ is linear, it is also lipschitzian with some constant $K>0$, which implies 
        \begin{equation*}
        ||\vect{\Theta}_{i\cdot} - (\vect{\Theta_{new}})_{i\cdot} ||^2 \le (1+K^2)||\vect{\Theta}_{i, -i} - (\vect{\Theta_{new}})_{i, -i} ||^2.
    \end{equation*}
   Recall that $D(C_k, C_\ell) \le \varepsilon_f$. We bound $\|\vect{\Theta}_{i, -i} - (\vect{\Theta}_{\mathrm{new}})_{i, -i}\|^2$ by decomposing its terms into three parts:
    \begin{enumerate}[label=(\alph*)]
    \item for any $j\notin C_k \cup C_\ell$, the coefficient $(\Theta_{new})_{ij}$ is only a weighted average of $\Theta_{ij}$ and $\Theta_{i'j}$, with $i' \in C_\ell$. It follows that $\sum_{j\notin C_k \cup C_\ell}(\Theta_{ij} - (\Theta_{new})_{ij})^2 \le D^2(\Theta_{i\cdot}, \Theta_{i'\cdot}) \le \varepsilon_f^2$.
    
        \item For any $j\in C_\ell$, $(\Theta_{new})_{ij} = \Theta_{ij}$.
        
        \item For $j \in C_k \setminus \{i\}$, the remaining $d_k-1$ terms give $(d_k-1)(r_{kk}-r_{kl})^2\le D^2(\Theta_{i\cdot}, \Theta_{i'\cdot})\le \varepsilon_f^2$,  for any $i' \in C_\ell$.
    \end{enumerate}
   The same bounds are obtained when  $i\in C_\ell$, and consequently it follows that 
    \begin{equation*}
        ||\vect{\Theta}_{i\cdot} - (\vect{\Theta_{new}})_{i\cdot} ||^2 \le 2(1+K^2)\varepsilon_f^2,
    \end{equation*}
    which completes the proof.
\end{proof}

\begin{proof}[Proof of Proposition \ref{prop:merging_step}]

Consider the metric space
\begin{equation*}
    \mathcal{S}^1_d = \left\{\Theta \in \mathcal M_d(\R): \Theta^\top = \Theta, \,  \Theta \vect{1}_d = \vect{0}_d\right\}\,
\end{equation*}
equipped with the topology of $(\mathcal M_d(\mathbb R), ||\cdot ||_F)$. 
We define the map $g$ that assigns to each matrix $\Theta \in \mathcal{S}^1_d$ its $d-1$ remaining eigenvalues, setting aside the trivial zero eigenvalue associated with the eigenvector $\mathbf{1}_d$:
\begin{align*}
    g : \mathcal{S}^1_d & \longrightarrow \mathbb R^{d-1} \\
    \Theta &\longmapsto \lambda(\Theta) = (\lambda_i(\Theta))_{i=1, \dots,d-1}.
\end{align*}
This mapping is continuous, and consequently $g^{-1}((\mathbb R_+^*)^{d-1}) = \SPd$ is an open subset of $\mathcal S_d^1$. Hence, it exists $\varepsilon^* >0$ such that
\begin{equation*}
B(\Theta, \varepsilon^*)\cap \mathcal S_d^1 \subset \SPd.
\end{equation*}
By construction, the updated matrix $\Theta_{new}$ belongs to $\mathcal S_d^1$ and satisfies $||\Theta_{\text{new}} - \Theta ||_F \leq c \varepsilon_f$ for some constant $c>0$, see Lemma \ref{lemma:distnewTheta}. Subsequently, choosing $\varepsilon_f < \varepsilon^*/c$ guarantees that $\Theta_{new}\in \SPd$.
\end{proof}

\subsection{Proof of Proposition \ref{prop:w-concentration}}
\label{appendix:w-concentration}
Let $i,j \in V$. We first derive bounds involving $\hat w_{ij}^{(n)}$ by considering two cases, depending on whether $i$ and $j$ belong to the same cluster or to distinct clusters.

\paragraph*{Case 1: $i$ and $j$ are in the same cluster.}

Since $\Gamma^\star_{ik} = \Gamma^\star_{jk}$ for all $k \neq i, j$, the triangle inequality applied to the pseudometric $D$ leads to the following relation: 
\begin{equation*}
     D((\hat{ \vect{\Gamma}}_n)_{i\cdot}, (\hat{ \vect{\Gamma}}_n)_{j\cdot})  \le  D((\hat{ \vect{\Gamma}}_n)_{i\cdot}, \vect{\Gamma}^\star_{i\cdot}) +  D((\hat{ \vect{\Gamma}}_n)_{j\cdot}, \vect{\Gamma}^\star_{j\cdot}) \le 2 (d-2)||\hat \Gamma_n - \Gamma^\star||_\infty.
\end{equation*}
On the event $\{||\hat \Gamma_n - \Gamma^\star||_\infty \le \delta_n\}$, the previous inequality reduces to
\begin{equation*}
 D\left((\hat{ \vect{\Gamma}}_n)_{i\cdot}, (\hat{ \vect{\Gamma}}_n)_{j\cdot}\right)  \le 2(d-2)\delta_n.
\end{equation*}
Let $\zeta >0$. By the bijectivity of the function $t \mapsto \exp(-\zeta t^2)$ on $\mathbb{R}_+$, it follows that
\begin{equation}
\label{eq:w-bound-1}
|1 - \hat w_{ij}^{(n)}| \le 1 - \exp\{-4(d-2)^2\zeta\delta_n^2\}.
\end{equation}

\paragraph{Case 2: $i$ and $j$ are in different clusters.}
In this case, using the definition of $\mu^\star$ and applying the triangle inequality once again yield
\begin{align*}
    \mu^\star & \le D(\vect{\Gamma}^\star_{i\cdot}, \vect{\Gamma}^\star_{j\cdot}) \\
    & \le  D((\hat{ \vect{\Gamma}}_n)_{i\cdot}, \vect{\Gamma}^\star_{i\cdot}) +  D((\hat{ \vect{\Gamma}}_n)_{j\cdot}, \vect{\Gamma}^\star_{j\cdot})+ D((\hat{ \vect{\Gamma}}_n)_{i\cdot}, (\hat{ \vect{\Gamma}}_n)_{j\cdot}).
\end{align*}
On the event $\{||\hat \Gamma_n - \Gamma^\star||_\infty \le \delta_n\}$, this gives
\begin{equation*}
    D((\hat{ \vect{\Gamma}}_n)_{i\cdot}, (\hat{ \vect{\Gamma}}_n)_{j\cdot}) \ge \mu^\star - 2(d-2)\delta_n.
\end{equation*}
Assume now that $\mu^\star>2(d-2)\delta_n$. Since $t \mapsto \exp(-\zeta t^2)$ is bijective on $\R_+$, this implies
\begin{equation}
\label{eq:w-bound-2}
    |\hat w_{ij}^{(n)}| \le \exp\left[-\zeta \{\mu^\star - 2(d-2)\delta_n\}^2\right].
\end{equation}
Consider now the couple of indices $(i^*,j^*) \in V$ such that
\begin{equation*}
    \max_{1 \le i,j \le d} |\hat w^{(n)}_{ij} - w^\star_{ij}| = |\hat w_{i^*j^*}^{(n)} - w^\star_{i^*j^*}|,
\end{equation*}
Hence, on the event $\{\|\hat{\Gamma}_n - \Gamma^\star\|_\infty \le \delta_n\}$, if $\mu^\star > 2(d-2)\delta_n$, it follows from \eqref{eq:w-bound-1} or \eqref{eq:w-bound-2}, depending on whether $i^*$ and $j^*$ belong to the same cluster or not, that
\begin{equation*}
    \max_{1 \le i,j \le d} |\hat w^{(n)}_{ij} - w^\star_{ij}| \le a_n,
\end{equation*}
which proves the first part of Proposition \ref{prop:w-concentration}. As for the second part, observe that $\delta_n \to 0$ as $n\to \infty$, hence $\mu^\star > 2(d-2)\delta_n$ holds for sufficiently large $n$. Combining this with the conditions of Proposition \ref{prop:conc-vario},  there exists a sequence $(\varepsilon_n)_{n \in \N}$ converging to $0$, such that for $n$ large enough and any $\varepsilon \ge \varepsilon_n$ 
\begin{align*}
    \p\left(\max_{1 \le i,j \le d} |\hat w^{(n)}_{ij} - w^\star_{ij}| \le a_n\right) = & \, \p(||\hat \Gamma_n - \Gamma^\star||_\infty \le \delta_n) \\ 
    + & \, \p\left(\max_{1 \le i,j \le d} |\hat w^{(n)}_{ij} - w^\star_{ij}| \le a_n \, \Big| \, ||\hat \Gamma_n - \Gamma^\star||_\infty > \delta_n \right)\p(||\hat \Gamma_n - \Gamma^\star||_\infty > \delta_n) \\
    \ge & \, \p(||\hat \Gamma_n - \Gamma^\star||_\infty \le \delta_n) \\
    \ge & \,  1 - \varepsilon \, ,
\end{align*}
which leads to the desired convergence rate.
\begin{flushright}
    $\square$
\end{flushright}

\subsection{Proof of Theorem \ref{theo:estimator_consistency} and \ref{theo:partition_consistency}}

The proof of Theorem \ref{theo:estimator_consistency} is inspired by the beginning of the proof of Theorem 1 in \cite{kato2009asymptotics}. For two matrices $A = (a_{ij})$ and $B = (b_{ij})$ in $\mathcal{M}_{m,n}(\mathbb{R})$, we subsequently make use of the standard Hadamard product $A \odot B \in \mathcal{M}_{m,n}(\mathbb{R})$, which is defined 
element-wise by
\begin{equation*}
    (A \odot B)_{ij} = a_{ij} b_{ij}.
\end{equation*}. In particular, it relies on the well-known properties \begin{equation}
\label{eq:had1}
        (A+B) \odot C = (A\odot C) + (B \odot C),
    \end{equation}
    and
    \begin{equation}
    \label{eq:had2}
        ||A \odot B ||_\infty \le ||A||_\infty ||B||_\infty, 
    \end{equation}
    where $C \in \mathcal{M}_{m,n}(\mathbb{R})$.

Theorem \ref{theo:partition_consistency} then follows from Theorem \ref{theo:estimator_consistency}.

\begin{proof}[Proof of Theorem \ref{theo:estimator_consistency}]
Let $\mathcal{S}^1_d = \left\{\Theta \in \mathcal M_d(\R): \Theta^\top = \Theta, \, \Theta \vect{1}_d = \vect{0}_d\right\}$ be the subspace of symmetric matrices with zero row sums, and let $\mathcal S^{d-1} = \{\Theta \in \mathcal{S}^1_d \mid \|\Theta\|_\infty = 1 \}$ denote its unit sphere. For any $\varepsilon>0$, consider
\begin{equation*}
    \mathcal H_\varepsilon = \{\Theta \in \mathcal S^{d-1} \,:\, \Theta^\star + \varepsilon \Theta \in \SPd \},
\end{equation*}
which is non-empty since any $\Theta \in \SPd \cap \mathcal{S}^1_d$ belongs to this set.
Furthermore, let $\lambda > 0$. We define the quantity
\begin{equation*}
    \mu_\varepsilon = \inf_{\Theta \in \mathcal H_\varepsilon} \left\{ L_\pen^\star(\Theta^\star +\varepsilon \Theta, \lambda) - L_\pen^\star(\Theta^\star, \lambda) \right\}>0\,,
\end{equation*}
where $L_\pen^\star$ corresponds to the mapping $L_\pen$ in Equation \eqref{prob-nllh}  with $\hat \Gamma$ replaced by $\Gamma^\star$.
Let $\Theta \in \mathcal H_\varepsilon$. We consider $\delta_\Theta = \arg\min_{\delta \in (0,1]} \{\Theta^\star + \frac \varepsilon \delta \Theta \in \SPd\}$. Then let $\delta \in [
\delta_\Theta, 1]$ and set $\Delta_n(\Theta, \lambda) = L_\mathcal P^{(n)}(\Theta, \lambda) - L_\pen^\star(\Theta, \lambda)$. Since $L_\mathcal P^{(n)}$ is convex, the following inequality holds:
\begin{align*}
 \delta \left[ L_\mathcal P^{(n)}(\Theta^\star + \frac{\varepsilon}{\delta} \Theta, \lambda) - L_\mathcal P^{(n)}(\Theta^\star, \lambda)\right]  \ge & L_\mathcal P^{(n)}(\Theta^\star + \varepsilon \Theta, \lambda) - L_\mathcal P^{(n)}(\Theta^\star, \lambda)\\
   \ge &\left[ L_\pen^\star(\Theta^\star+ \varepsilon\Theta, \lambda) - L_\pen^\star(\Theta^\star, \lambda) \right] +\\ & \left[\Delta_n(\Theta^\star + \varepsilon\Theta, \lambda) - \Delta_n(\Theta^\star, \lambda) \right].
\end{align*}
Hence,  
\begin{equation}
    \label{eq:cvx-ineq}
    \delta \left[ L_\mathcal P^{(n)}(\Theta^\star + \frac{\varepsilon}{\delta} \Theta, \lambda) - L_\mathcal P^{(n)}(\Theta^\star, \lambda)\right] \ge \mu_\varepsilon - 2 \Delta_n,
\end{equation}
where $\Delta_n = \sup_{\{\Theta \in \SPd: || \Theta - \Theta^\star||_\infty\leq\varepsilon \}} |\Delta_n(\Theta, \lambda)|$. Suppose now that $\hat \Theta_n(\lambda) \in \SPd$ satisfies  $|| \hat \Theta_n(\lambda) - \Theta^\star||_\infty = \varepsilon/\delta$, for $\delta \in (0,1)$. There exists $\Theta \in \mathcal H_\varepsilon$ such that $\Theta_n(\lambda) = \Theta^\star + \frac{\varepsilon}{\delta}\Theta$. Since $L_\mathcal P^{(n)}(\cdot, \lambda)$ achieves its minimum at $\hat \Theta_n(\lambda)$ on $\SPd$, it follows that the left-hand term of the inequality \eqref{eq:cvx-ineq} is non-positive, and consequently, so is the right-hand term. By contraposition
\begin{equation*}
    \Delta_n < \frac{\mu_\varepsilon}{2} \Longrightarrow ||\hat\Theta_n(\lambda) - \Theta^\star||_\infty \le \varepsilon.
\end{equation*}
In addition, since the function $L_\pen^\star$ is $C^2$ for matrices of constant rank, by a second-order Taylor expansion with integral remainder we have for all $\Theta \in \mathcal H_\varepsilon$
\begin{equation*}
    L_\pen^\star(\Theta^\star +\varepsilon \Theta, \lambda) - L_\pen^\star(\Theta^\star, \lambda) = \varepsilon^2 \int_0^1  (1-t)\nabla^2L_\pen^\star(\Theta^\star+t\varepsilon \Theta \lambda)(\Theta,\Theta) dt,
\end{equation*}
where the first-order term vanishes because $\nabla L_\pen^\star(\Theta^\star, \lambda)(\Theta) = 0$.
Under Assumption 1, which directly yields that $L_\pen^\star$ is strongly convex, the following inequality holds 
\begin{equation*}
    L_\pen^\star(\Theta^\star +\varepsilon \Theta, \lambda) - L_\pen^\star(\Theta^\star, \lambda) \ge \varepsilon^2 S\int_0^1 1-t dt = \frac{S}{2}\varepsilon^2,
\end{equation*}
which induces 
\begin{equation*}
    \mu_\varepsilon \ge \frac{S}{2}\varepsilon^2,
\end{equation*}
and therefore
\begin{equation*}
    \Delta_n \le \frac{S}{4}\varepsilon^2 \Longrightarrow     ||\hat\Theta_n(\lambda) - \Theta^\star||_\infty \le \varepsilon.
\end{equation*}

We shall now derive a bound for $\Delta_n$. Let $\Theta \in \SPd$ be a matrix such that $||\Theta - \Theta^\star||_\infty \le \varepsilon$. Observing that the penalty term $\pen(\Theta)$ for both $L_\pen^{(n)}$ and $L_\pen^\star$ takes the form
$$
    \pen (\Theta) = \frac 12 \vect{1}_d^\top \left(W \odot D^2(\Theta)\right) \vect{1}_d,
    $$
and using the distributivity of the Hadamard product recalled in \eqref{eq:had1}, we have
\begin{equation}
    \label{eq:delta-bound-1}
|\Delta_n(\Theta, \lambda)| \le \frac{1}{2} \left| \text{tr} \big( (\hat\Gamma_n - \Gamma^\star)\Theta \big) \right| + \frac{\lambda}{2} \left| \vect{1}_d^\top \big( (\hat W_n - W^\star) \odot D^2(\Theta) \big) \vect{1}_d \right|.
\end{equation}
By Hölder's inequality, the trace term in \eqref{eq:delta-bound-1} can be bounded as follow
\begin{equation}
    \label{eq:trace-bound}
     |\text{tr}((\hat\Gamma_n-\Gamma^\star)\Theta)| \le ||\Theta||_1 ||\hat\Gamma_n-\Gamma^\star||_\infty
\end{equation}
Regarding the penalty term, applying the Cauchy-Schwarz inequality together with standard matrix norm inequalities yields
$$\begin{array}{lll}
     |\vect{1}_d^\top \left((\hat W_n - W^\star) \odot D^2(\Theta)\right) \vect{1}_d| & \le & ||\vect{1}_d||_2 \times ||((\hat W_n - W^\star) \odot D^2(\Theta)) \vect{1}_d||_2 \\ 
    & \le & d \,|||(\hat W_n - W^\star) \odot D^2(\Theta)|||\\
     & \le & d^2\, ||(\hat W_n - W^\star) \odot D^2(\Theta)||_\infty,
\end{array}$$
where, as a reminder, $\vert{}\vert{}\vert{}\cdot\vert{}\vert{}\vert{}$ denotes the spectral norm of a matrix.
Using the inequality \eqref{eq:had2}, we deduce 
\begin{equation}
\label{eq:pen-bound}
    |\vect{1}_d^\top \left((\hat W_n - W^\star) \odot D^2(\Theta)\right) \vect{1}_d| \le d^2 ||\hat W_n - W^\star||_\infty ||D^2(\Theta)||_\infty 
\end{equation}
Combining \eqref{eq:trace-bound} and \eqref{eq:pen-bound} with \eqref{eq:delta-bound-1} and applying Proposition 4, it follows that on the event $\{||\hat\Gamma_n-\Gamma^\star||_\infty \le \delta_n\}$
\begin{equation*}
    |\Delta_n(\Theta, \lambda)| \le \frac{1}{2} ||\Theta||_1 \delta_n + \frac \lambda 2 d^2 ||D^2(\Theta)||_\infty a_n.
\end{equation*}
In addition, we have
\begin{equation*}
    ||\Theta||_1 \le d^2 ||\Theta^\star - \Theta ||_\infty + ||\Theta^\star||_1 \le d^2 \varepsilon + ||\Theta^\star||_1,
\end{equation*}
and, for any $(i,j) \in V^2$, using the triangle inequality applied to the pseudometric $D$, \begin{equation}
    \label{eq:dist-inequation}
     D(\vect{\Theta}_{i\cdot},  \vect{\Theta}_{j\cdot}) \le  D(\vect{\Theta}_{i\cdot}, \vect{\Theta}^\star_{i\cdot}) +  D( \vect{\Theta}_{j\cdot}, \vect{\Theta}^\star_{j\cdot}) + D(\vect{\Theta}^\star_{i\cdot}, \vect{\Theta}^\star_{j\cdot}) \le 2(d-2) \varepsilon + \rho^\star,
\end{equation}
We thus obtain that
\begin{equation}
    \label{eq:pen-bound-2}
        \Delta_n \le \frac 12 (d^2 \varepsilon + ||\Theta^\star||_1) \delta_n + \frac \lambda 2 d^2  (2(d-2) \varepsilon + \rho^\star)^2 a_n.
\end{equation}

\paragraph{}
Finding a value for $\varepsilon$ such that the right-hand side of \eqref{eq:pen-bound-2} is strictly less than $\frac{S}{4} \varepsilon^2$ yields a concentration bound for $\hat{\Theta}_n(\lambda)$. Therefore, the goal is to find an $\varepsilon>0$ such that:

\begin{equation*}
    (d^2 \varepsilon + ||\Theta^\star||_1) \delta_n +  \lambda  d^2  (2(d-2) \varepsilon + \rho^\star)^2 a_n \le \frac{S}{2} \varepsilon^2.
\end{equation*}

\paragraph{}
The previous equation reduces to a quadratic inequality of the form $\alpha\varepsilon^2 + \beta\varepsilon + \gamma \le 0$ where \begin{equation*}
    \alpha = 4\lambda d^2(d-2)^2a_n -\frac{S}{2}, \quad \beta = d^2 \delta_n + \lambda d^2  4(d-2) \rho^\star a_n, \quad \gamma = \lambda (d\rho^\star)^2 a_n +||\Theta^\star||_1 \delta_n.
\end{equation*}
Suppose that $\lambda \le \frac{S}{8d^2(d-2)^2} a_n^{-1}$. Hence, $\alpha \le 0$ and the previous inequality holds for all values outside the roots of the polynomial. In particular, it is satisfied for any $\varepsilon \geq \varepsilon_1 = \frac{-\beta - \sqrt{\Lambda}}{2\alpha} \ge 0$, where $\Lambda = \beta^2 - 4\alpha \gamma>0$. Since \begin{equation*}
    b_n(\lambda) = \frac{\delta_n + \lambda 4(d-2) \rho^\star a_n + \sqrt{\Lambda'_n}}{\frac{S}{d^2} - 8\lambda (d-2)^2a_n} \ge \varepsilon_1,
\end{equation*} 
this concludes the first part of Theorem \ref{theo:estimator_consistency}. Regarding the second part of the proof, recall that $(a_n)_n$ vanishes as $n \to \infty$, which guarantees that $\lambda \le \frac{S}{8d^2(d-2)^2} a_n^{-1}$ for sufficiently large $n$. As $n \to \infty$, the denominator of $b_n(\lambda)$ tends to $S/d^2$, whereas the numerator behaves asymptotically as $\left(\frac{2S}{d^2}\left( \frac{||\Theta^\star||_1}{d^2} \delta_n + \lambda (\rho^\star)^2 a_n \right)\right)^{1/2}$. Consequently,
\begin{equation*}
b_n(\lambda) = O\left(\sqrt{\delta_n + \lambda \tilde{C} a_n}\right),
\end{equation*}
for any $\lambda>0$, where $\tilde{C} = \frac{d^2 (\rho^\star)^2}{||\Theta^\star||_1}$. Under the conditions of Proposition \ref{prop:conc-vario}, proceeding as in the proof of Proposition \ref{prop:w-concentration} thus gives the second part of Theorem \ref{theo:estimator_consistency}.

\end{proof}

\begin{proof}[Proof of Theorem \ref{theo:partition_consistency}]

For any pair $(i,j) \in T^\star$, since $D(\vect{\Theta}^\star_{i\cdot}, \vect{\Theta}^\star_{j \cdot}) = 0$, it follows from the first inequality in Equation~\eqref{eq:dist-inequation} that any $\Theta \in \SPd$ satisfies
\begin{equation*}
D(\vect{\Theta}{i\cdot}, \vect{\Theta}{j \cdot}) \le 2(d-2)|\Theta - \Theta^\star|_\infty.
\end{equation*} Reversing the roles of $\Theta$ and $\Theta^\star$ in \eqref{eq:dist-inequation} likewise leads to
\begin{equation*}
    D(\vect{\Theta}^\star_{i\cdot}, \vect{\Theta}^\star_{j \cdot}) \le 2(d-2)||\Theta - \Theta^\star||_\infty + D(\vect{\Theta}_{i\cdot}, \vect{\Theta}_{j \cdot})
\end{equation*}
for $(i,j) \notin T^\star$. Applying the first part of Theorem \ref{theo:estimator_consistency} directly yields the first part of Theorem \ref{theo:partition_consistency}, from which the second part readily follows.
\end{proof}

\section{Additional simulation results}

\subsection{Approximate block structure}
\label{appendix:approximate.sim}

We investigate the robustness of the HRC procedure to departures from the exact block-model assumption. Starting from the balanced configuration detailed in Example \ref{example_block_structure_Theta}, independent uniform perturbations are added to the non-zero coefficients of the matrix $\Theta^\star$. Although the resulting matrix no longer satisfies an exact block structure, it remains close to the original model.

The noisy scenario, shown in Figure \ref{fig:noised-res15}, investigates the robustness of the proposed methodology when the exact block assumption is violated. Although the perturbation substantially alters the HR precision matrix, the latent block organisation remains sufficiently pronounced for the HRC procedure to recover it accurately. This suggests that our algorithm does not merely fit an idealised block model but also tolerates moderate departures from it.

\begin{figure}[!ht]
    \centering
    \includegraphics[width=8cm]{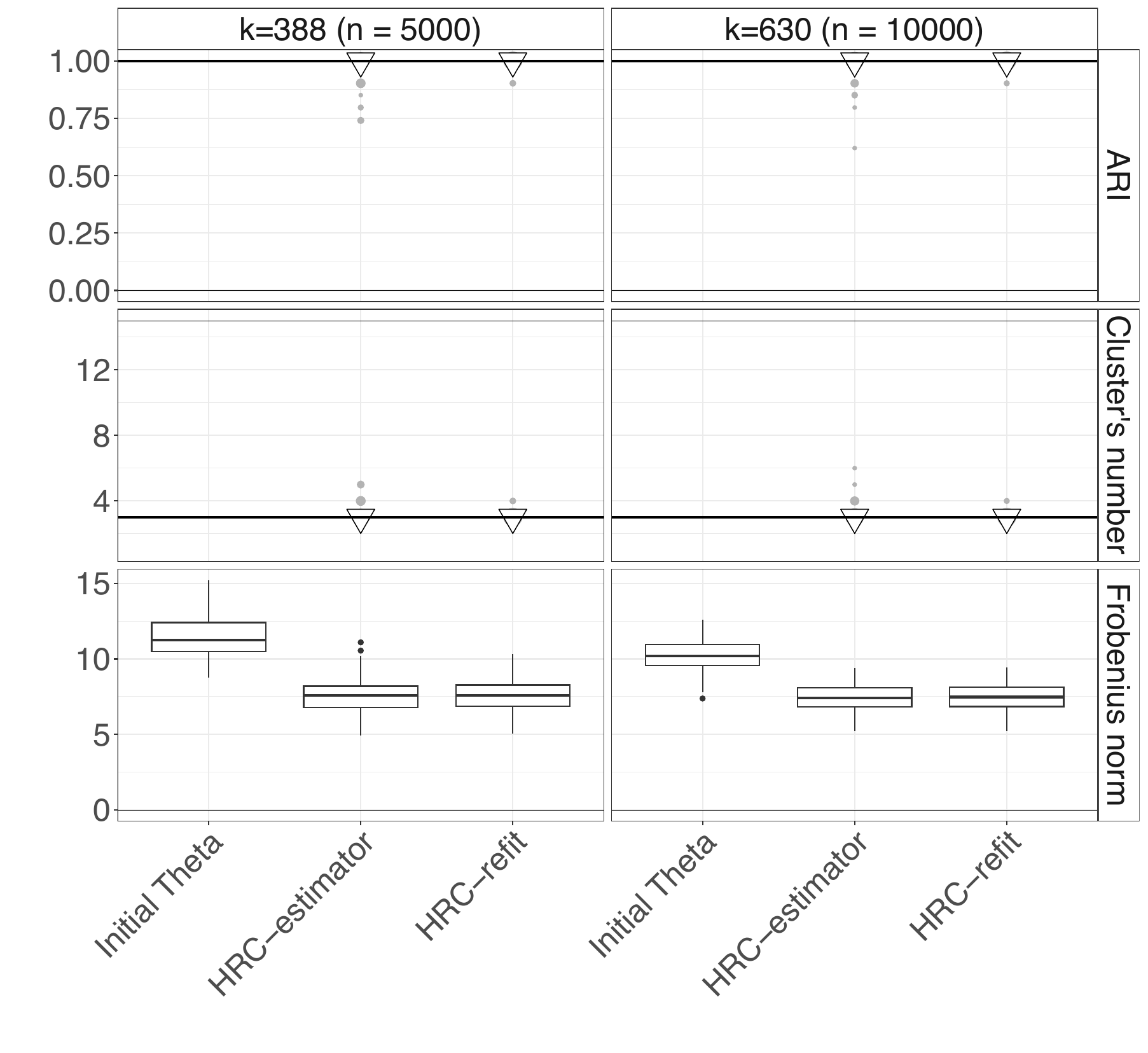}
    \caption{Clustering performance (top two rows) and HR precision matrix estimation error (bottom row) for a balanced noisy structure for $d=15$ for different sample sizes. The Frobenius norm is calculated with respect to the true block-structured HR precision matrix. See Figure 1 for further explanation on the graphs.}
    \label{fig:noised-res15}
\end{figure}

\subsection{Summary table of results}
\label{appendix:table.results}
To facilitate the comparison across all configurations, Table \ref{tab:results} summarises the average performances of the refitted estimator for the two sample sizes, $n=5000$ and $n=10000$, respectively. As already discussed in Section \ref{sect-sim}, for $d=15$, the partition is accurately recovered, even in a noised scenario and a moderate sample size ($n=5000$). When $d=60$, the partition estimation remain accurate for the balanced structure. This becomes more challenging in the unbalanced case, especially when the sample size is small. Increasing the latter address this issue.

\begin{table*}[!ht]
    \tabcolsep=0pt
    \begin{tabular*}{\textwidth}{@{\extracolsep{\fill}}lcccc@{\extracolsep{\fill}}}
        \toprule%
        $d= 15$ &\multicolumn{2}{@{}c@{}}{$n=5000$} & \multicolumn{2}{@{}c@{}}{$n=10000$} \\
        \cline{2-3}\cline{4-5}%
         Configuration & ARI & $\hat{K}$ & ARI & $\hat K$ \\
        \midrule
        $d_1=d_2=d_3=5$ & 0.997 (0.017) & 3.03 (0.171) & 0.999 (0.009) & 3.01 (0.1)\\
        $d_1= 5$, $d_2=3$, $d_3=7$  & 0.995 (0.025) & 3.06  (0.278) & 0.998 (0.016) & 3.02  (0.141) \\
        $d_1=d_2=d_3=5$ (noised)  & 0.997 (0.017) & 3.03  (0.171) & 0.998 (0.014) & 3.02 (0.141)\\
        \midrule
        $d=60$ & & & & \\
        \midrule
        $d_1=\dots=d_5=12$  &  0.999 (0.007) & 5.01 (0.1) & 1 (0)  &  5 (0)\\
        $d_1= 10$, $d_2=20$, $d_3=30$ &0.629 (0.385) &  2.39 (0.803) &   0.964 (0.107) & 2.98  (0.2)\\
        \botrule
    \end{tabular*}
    \caption{The quantities $d_i$ refer to the size of the clusters. For each configuration we provide the mean value of the number of clusters and the ARI for the Refit HRC estimator. The empirical standard deviation is specified in brackets.}
    \label{tab:results}
\end{table*}

\section{Additional details on Application}

\subsection{Table of currency codes}
\label{appendix:table.currency}

Table \ref{tab:country} shows the country codes used in Figure \ref{fig:currency} and the associated country name.

\begin{table}[!ht]
    \centering
    \begin{tabular}{c|l|c|l}
        \textbf{Code} & Country & \textbf{Code} & Country\\
        \hline
        AUS & Australia Dollar & NOR & Norwegian Krone\\
        CAN & Canadian Dollar & POL & Polish Zloty \\
        CHN & Chinese Yuan & RUS &  Russian Ruble \\
        CZE & Czech Koruna & SAU & Saudi Riyal \\
        DNK & Danish Krone & SPG & Singapore Dollar \\
        EUR & Euro & ZAF & South African Rand \\
        HKG & Hong Kong Dollar & KOR & South Korean Won \\
        HUN & Hungarian & SWE & Swedish Krona \\
        IND & Indian Rupee & CHE & Swiss Franc \\
        ISR & Israeli Shekel & TWN & Taiwan Dollar \\
        JPN & Japanese Yen & THA & Thai Baht \\
        MYS & Malaysian Ringgit & TUR & Turkish Lira \\
        NZL & New Zealand Dollar & USA & US Dollar \\
    \end{tabular}
    \caption{Country codes of foreign exchanges rate (into GBP).}
    \label{tab:country}
\end{table}

\subsection{Results on dependency on the clusters}
\label{appendix:table.application}

Table \ref{tab:app-chi} presents the estimation values for the joint tail dependence coefficient obtained with the clustering induced by the groups shown in Figure \ref{fig:currency}. The table shows groups with an inner joint tail dependence equal to one: these are exactly the groups containing a single variable.

\begin{table}[!ht]
    \centering
    \begin{tabular}{|l|r|r|r|r|r|r|r|r|}
        \hline
         Group & 1 & 2 & 3 & 4 & 5 & 6 & 7 & 8\\
        \hline
        1 & \textbf{0.977} & 0.429 & 0.399 & 0.438 & 0.437 & 0.444 & 0.621 & 0.541\\
        \hline
        2 & 0.429 & \textbf{0.593} & 0.456 & 0.415 & 0.412 & 0.429 & 0.418 & 0.451\\
        \hline
        3 & 0.399 & 0.456 & \textbf{0.497} & 0.385 & 0.402 & 0.422 & 0.384 & 0.433\\
        \hline
        4 & 0.438 & 0.415 & 0.385 & \textbf{1.000} & 0.475 & 0.421 & 0.491 & 0.394\\
        \hline
        5 & 0.437 & 0.412 & 0.402 & 0.475 & \textbf{0.826} & 0.527 & 0.450 & 0.399\\
        \hline
        6 & 0.444 & 0.429 & 0.422 & 0.421 & 0.527 & \textbf{0.498} & 0.435 & 0.408\\
        \hline
        7 & 0.621 & 0.418 & 0.384 & 0.491 & 0.450 & 0.435 & \textbf{1.000} & 0.470\\
        \hline
        8 & 0.541 & 0.451 & 0.433 & 0.394 & 0.399 & 0.408 & 0.470 & \textbf{0.525}\\
        \hline
    \end{tabular}
    \caption{Estimation values of the joint tail dependence coefficient $\chi$ within (in bold) and between each group of currencies.}
    \label{tab:app-chi}
\end{table}

%% file: figures/block-graph.tex
\begin{tikzpicture}[auto, thick]


    \node[draw, circle, fill=black, inner sep=2pt, ] (a11) at (0,5) {};
    \node[draw, circle, fill=black, inner sep=2pt, ] (a12) at (1.5,5) {};
    \node[draw, circle, fill=black, inner sep=2pt, ] (a13) at (0,3.5) {};
    \node[draw, circle, fill=black, inner sep=2pt, ] (a14) at (1.5,3.5) {};

    \foreach \i/\j in {a11/a12,a11/a13,a11/a14,a12/a13,a12/a14,a13/a14}
      \draw (\i) -- (\j);
    \draw (a13) -- (a14) node[midway, above] {$r_{11}$};
    \node[draw, circle, fill=black, inner sep=2pt] (a21) at (0,2) {};
    \node[draw, circle, fill=black, inner sep=2pt, ] (a22) at (1.5,2) {};
    \draw (a21) -- (a22) node[midway, below] {$r_{22}$};

    \draw[black, dashed] (a21.north) .. controls +(0,0) and +(-1,0) .. (a11);

    \draw[black, dashed] (a22) .. controls +(2,2) and +(2,2) .. (a11) node[midway, right] {$r_{12}$};
    \draw[black, dashed] (a12) .. controls +(-2,2) and +(-2,2) .. (a21);
    \draw[black, dashed] (a22.north) .. controls +(0,0) and +(1,0) .. (a12);
    \draw[black, dashed] (a21.north) .. controls +(0,0) and +(0,0) .. (a13.south);
    \draw[black, dashed] (a21.north) .. controls +(1,0) and +(0,0) .. (a14.south);
    \draw[black, dashed] (a22.north) .. controls +(-1,0) and +(0,0) .. (a13.south);
    \draw[black, dashed] (a22.north) .. controls +(0,0) and +(0,0) .. (a14.south);

    \node[draw, circle, fill=black, inner sep=2pt, ] (a31) at (0,-1) {};
    \node[draw, circle, fill=black, inner sep=2pt, ] (a32) at (1.5,-1) {};
    \node[draw, circle, fill=black, inner sep=2pt, ] (a33) at (0.75,0) {};

    \foreach \i/\j in {a31/a32,a31/a33,a32/a33}
      \draw (\i) -- (\j) node[midway, below] {};

    \draw (a31) -- (a32) node[midway, above] {$r_{33}$};

    \foreach \j in {a21,a22}
         \draw[black, dotted] (\j) .. controls +(0,-1) and +(0,0) .. (a33.north);

    \draw[black, dotted] (a21.south) .. controls +(0,0) and +(0,1) .. (a32) ;
    \draw[black, dotted] (a22.south) .. controls +(0,0) and +(0,1) .. (a31);
    \draw[black, dotted] (a31.north) .. controls +(0,0) and +(-1,0) .. (a21) node[midway, left] {$r_{23}$};
    \draw[black, dotted] (a32.north) .. controls +(0,0) and +(1,0) .. (a22);



    \node[draw, circle, minimum size=2cm] (C1) at (9,5) {};
    \node[fill=black, circle, inner sep=2pt] (c11) at ($(C1.center)+(-0.5,0.5)$) {};
    \node[fill=black, circle, inner sep=2pt] (c12) at ($(C1.center)+(0.5,0.5)$) {};
    \node[fill=black, circle, inner sep=2pt] (c13) at ($(C1.center)+(0.5,-0.5)$) {};
    \node[fill=black, circle, inner sep=2pt] (c14) at ($(C1.center)+(-0.5,-0.5)$) {};
    \foreach \i/\j in {c11/c12,c11/c13,c11/c14,c12/c13,c12/c14,c13/c14}
      \draw (\i) -- (\j);

    \node[draw, circle, minimum size=2cm] (C2) at (9,2) {};
    \node[fill=black, circle, inner sep=2pt] (c21) at ($(C2.center)+(-0.3,0)$) {};
    \node[fill=black, circle, inner sep=2pt] (c22) at ($(C2.center)+(0.3,0)$) {};
    \draw (c21) -- (c22);

    \node[draw, circle, minimum size=2cm] (C3) at (9,-1) {};
    \node[fill=black, circle, inner sep=2pt] (c31) at ($(C3.center)+(-0.4,-0.2)$) {};
    \node[fill=black, circle, inner sep=2pt] (c32) at ($(C3.center)+(0.4,-0.2)$) {};
    \node[fill=black, circle, inner sep=2pt] (c33) at ($(C3.center)+(0,0.4)$) {};
    \foreach \i/\j in {c31/c32,c31/c33,c32/c33}
      \draw (\i) -- (\j);

    \draw[black, dashed] (C1.south) -- (C2.north) node[midway,right]{$r_{12}$};
    \draw[black, dotted] (C2.south) -- (C3.north) node[midway,right]{$r_{23}$};

  \end{tikzpicture}